\newcommand*{\rom}[1]{\expandafter\@slowromancap\romannumeral #1@}
\newcommand{\R}{\mathbb{R}}
\newtheorem{theorem}{Theorem}[section]
\newtheorem{lemma}[theorem]{Lemma}
\newtheorem{proposition}[theorem]{Proposition}
\theoremstyle{definition}
\theoremstyle{remark}
\newtheorem{remark}[theorem]{Remark}
\numberwithin{equation}{section}
\newcommand{\pa}{\partial}
\newcommand{\lt}{\left}
\newcommand{\rt}{\right}
\newcommand{\bq}{\begin{equation}}
\newcommand{\eq}{\end{equation}}
\newcommand{\calC}{\mathcal C}
\newcommand{\calL}{\mathcal L}
\newcommand{\calO}{\mathcal O}
\newcommand{\intr}{\int_\R}
\renewcommand{\geq}{\geqslant}
\renewcommand{\ge}{\geqslant}
\renewcommand{\leq}{\leqslant}
\renewcommand{\le}{\leqslant}
\newcommand{\dx}{\textnormal{d}x}
\newcommand{\dt}{\textnormal{d}t}
\newcommand{\dta}{\textnormal{d}\tau}
\newcommand{\ds}{\textnormal{d}s}
\newcommand{\cmax}{c_+}
\newcommand{\cmin}{c_-}
\newcommand{\gmax}{\gamma_+}
\newcommand{\gmin}{\gamma_-}
\renewcommand{\sout}[1]{}
\renewcommand{\cancel}[1]{}
\begin{document}

\title[Critical thresholds in pressureless Euler--Poisson equations]{Critical thresholds in pressureless Euler--Poisson equations with background states}

\author{Young-Pil Choi}
\address{Department of Mathematics, Yonsei University, Seoul 03722, Republic of Korea}
\email{ypchoi@yonsei.ac.kr}

\author{Dong-ha Kim}
\address{Department of Mathematics, Yonsei University, Seoul 03722, Republic of Korea}
\email{skyblue898@yonsei.ac.kr}

\author{Dowan Koo}
\address{Department of Mathematics, Yonsei University, Seoul 03722, Republic of Korea}
\email{dowan.koo@yonsei.ac.kr}

\author{Eitan Tadmor}
\address{Department of Mathematics and Institute for Physical Science and Technology, University of Maryland, College Park, MD 20742, USA}
\email{tadmor@umd.edu}

\date{\today}
\subjclass[2020]{Primary, 35Q35; Secondary, 35B30, 76N10.}

\keywords{Pressureless Euler--Poisson system, non-vanishing background states, neutrality condition, critical thresholds, well-posedness, non-existence.}

\thanks{\textbf{Acknowledgment.} 
The authors thank Junsik Bae, Bongsuk Kwon, and Changhui Tan for helpful discussion on the Euler--Poisson system for cold ion dynamics and Euler--Poisson--alignment system. The research of YPC was supported by  NRF grant 2022R1A2C1002820. The research of ET was supported in part by ONR grant N00014-2112773.}

\begin{abstract}
We investigate the critical threshold phenomena in a large class of one dimensional pressureless Euler--Poisson (EP) equations, with non-vanishing background states. First, we establish  local-in-time well-posedness 
in proper regularity spaces, which are adapted for a certain \emph{neutrality condition} to hold. The neutrality condition is shown to be necessary: we construct smooth solutions that exhibit instantaneous failure of the neutrality condition, which in turn yields non-existence of solutions, even locally in time, in the classical Sobolev spaces $H^s(\R)$, $s \geq 2$. 
Next, we study the critical threshold phenomena in the neutrality-condition-satisfying pressureless EP systems, where we distinguish between two cases. We prove that in the case of attractive forcing, the neutrality condition can further restrict the sub-critical region into its borderline, namely --- the sub-critical region is reduced to a single line in the phase plane. We then turn to provide a rather definitive answer for the critical thresholds in  the case of repulsive EP systems with variable backgrounds. As an application, we analyze the critical thresholds for the damped EP system for cold plasma ion dynamics, where the density of electrons is given by the \emph{Maxwell--Boltzmann relation}.
\end{abstract}


\maketitle

\setcounter{tocdepth}{1}
\tableofcontents
%
%
%
%
%

\section{Introduction and statement of main results}
\subsection{Systems and notions}
In this paper, we are interested in the analysis of critical threshold phenomena in a large class of pressureless Euler--Poisson (EP) equations in one dimension. More precisely, the main purpose of our work is to propose a new method based on Lyapunov functions to investigate the super-critical region with finite-time breakdown and the sub-critical region with global regularity of $\calC^1$ solutions to the following equations:
\begin{equation}\label{main_sys}
\begin{cases}
\partial_t \rho + \partial_x(\rho u) = 0, \\
\partial_t u + u \partial_x u = -\nu u - k\partial_x \phi,\\
- \partial_{xx}\phi = \rho - c
\end{cases}
\end{equation}
in $\R\times (0,T)$ subject to initial data
\begin{equation}\label{ini}
(\rho, u)(0,x)=(\rho_0, u_0)(x), \quad x \in \R.
\end{equation}
Here, $\rho=\rho(t,x)$, $u=u(t,x)$ and $\phi = \phi(t,x)$ are respectively, the density, velocity,  and  potential of the flow.  The parameter $\nu\ge0$ indicates the strength of damping. The parameter $k$ on the right of  \eqref{main_sys}${}_2$ is a physical constant, signifying the underlying forcing of the system ---  either an \emph{attractive} or \emph{repulsive} forcing depending on whether  $k <0$ or, respectively, $k>0$; for simplicity, we use the re-scaled parameter $k=\pm1$ throughout the paper.\newline
 Equation \eqref{main_sys}${}_3$ involves a positive background state, $c = c(t,x)$, which is assumed to be uniformly bounded away from vacuum, 
\[
 0 <  c_{-} \leq c(t,x) \leq c_{+}, \quad \forall  (t,x) \in \R_+ \times \R,
\]
to vary smoothly in a manner specified in \eqref{cspace} below, and to preserve its  \emph{mass fluctuation}  in the sense that
 \begin{equation}\label{comp}
 \int_{\R} \lt(c(t,x) -c_0(x)\rt) \dx = 0,\quad \forall t\in [0,T).
\end{equation}
Finally, we augment \eqref{main_sys} with the   following \emph{neutrality condition}
\bq\label{def_neu}
\rho(t,\cdot) - c(t,\cdot) \in L^1(\R) \quad \mbox{and} \quad \intr (\rho(t,x) - c(t,x))\,\dx = 0 \qquad \forall t\in [0,T).
\eq
This neutrality condition plays a central role in our discussion. Accordingly,  we define a \emph{classical solution} to be a pair, $(\rho,u)$, satisfying both --- the system \eqref{main_sys} pointwise, and the neutrality condition \eqref{def_neu}.
It is essential to observe that the neutrality condition is independent of the main system \eqref{main_sys}. Indeed,  in Theorem \ref{anomal} below we construct so-called \textit{anomalous solutions}, in the sense of solving the main system \eqref{main_sys}  with neutrality-satisfying initial data, yet these solutions fail \eqref{def_neu}  instantaneously after the initial time.
Observe that since $c(t,x)\geq c_->0$, then total mass $\displaystyle \int_{\R} \rho(t,x)\dx$ is inifinte;  we impose  \eqref{comp}  as a \emph{compatiblity condition} with the neutrality  \eqref{def_neu}, so that the  total \emph{mass fluctuation},  $\displaystyle \int_{\R} \lt(\rho(t,x)-\rho_0(x)\rt)\dx$, is conserved in time.
\newline

\subsection{History and review}

The system \eqref{main_sys} is one of the fundamental fluid models describing the dynamics of many important physical flows, including charge transport, plasma waves, and cold ions ($k > 0$) as well as collapse of stars due to self-gravitation ($k < 0$), see \cite{BK19, BK22, BW98, DLYY02, ELT01, HJL81, MRS90} and references therein.

A mathematical study of finite-time loss of regularity (or singularity formation as shocks) and global-in-time regularity of solutions to Euler equations with nonlocal interaction forces are by now well established. Since the literature-related results are too vast to be mentioned here, we focus on results for the EP systems. The critical thresholds in the 1D pressureless EP system \eqref{main_sys} with, $\nu\geq0$, and $c = \bar c \geq 0$ are studied in \cite{ELT01}. The damped EP system or EP system with the quadratic confinement is also taken into account in \cite{BL20, CCZ16}. In the multi-dimensional case, the critical thresholds are investigated for the restricted EP system in \cite{LT02, LT03} and EP system with radial symmetry in \cite{BLpre, T21, WTB12}.

The case with pressure i.e., the pressure term $p(\rho) = \rho^\gamma$ with $\gamma \geq 1$ is added to the EP system, but with $c = 0$ and $\nu = 0$, is also studied in \cite{TW08}. The critical thresholds for the EP system with pressure and nonzero background state are a challenging open problem. We refer to \cite{GHZ17} for the global-in-time existence of solutions with small amplitude to the 1D EP system for electrons. The critical thresholds for the 1D Euler-alignment or Euler--Poisson-alignment systems are analyzed in \cite{BLTpre, CCTT16, TT14}. The finite-time singularity formation for the 1D EP system for cold ions is also discussed in \cite{BCKpre, L06}.


Beyond those developments, there is little literature on critical thresholds in EP systems with 
\emph{variable} background. In this context, we mention the recent work \cite{BL20_2} in which critical thresholds conditions in the system \eqref{main_sys} with the attractive forces are investigated. In order to handle new difficulties arising from the consideration of the variable background, a combination of phase plane analysis and comparison principle is used in \cite{BL20_2}. However, as mentioned in \cite{BL20_2}, that strategy would not be worked in the presence of repulsive forces.

%
%
%
%
%

\noindent
\subsection{Main results}
In this section, we present our main results on Euler--Poisson equation with background states. The purpose of this work is four-fold:\newline
(i) we present a new framework for the well-posedness theory for the EP system with the background, which includes a rigorous treatment of the neutrality condition \eqref{def_neu}, while justifying this framework by presenting concrete examples; \newline 
(ii) we point out that in the attractive case, the set of relevant configurations that yield global smooth solutions --- with variable and even with constant background, is, in fact, restricted to the borderline case due to the neutrality condition;\newline
(iii) we present an in-depth threshold analysis for global solutions of the repulsive EP system with variable background; and\newline
(iv) as an application of repulsive case, we study the case of cold plasma corresponding to exponential background $c=e^\phi$. 

Below, we present the main results in more detail.

\subsubsection{\textbf{Local well-posedness and neutrality condition}}\label{ssec:lwp}

We first develop a local-in-time well-posedness theory for the EP system with a non-vanishing background state \eqref{main_sys}.
The EP system with zero-background was studied by many authors, and in particular we refer to the $H^s(\R)$ well-posedness in \cite[Appendix]{LZ02}. However,  we cannot specify an appropriate reference for the local-in-time well-posedness theory for the EP system with \emph{non-vanishing} background, \eqref{main_sys}. Here, the neutrality condition, \eqref{def_neu}, implies the non-integrability $\rho(t,\cdot) \notin L^1(\R)$, in order to match the infinite mass that `enters' the system from infinity, $\displaystyle  \int_\R c(t,\cdot)\dx=\infty$. 
Consequently, the local well-posedness of \eqref{main_sys} requires a careful study of proper regularity space, beyond $H^s(\R)$. This is the content of our first theorem. We fix $s \geq 2$ throughout the paper.

\begin{theorem}[{\bf Local well-posedness}]\label{thm_apriori} Suppose that the initial data $(\rho_0, u_0)$ satisfy
\bq\label{eq:ICa}
(\rho_0 - c_0, u_0) \in H^s\cap \dot H^{-1}(\R) \times H^{s+1}(\R).
\eq
Then there exists a positive constant $T>0$ such that the system \eqref{main_sys}--\eqref{ini} admits a unique solution $(\rho, u)$ satisfying
\bq\label{cls:lwp}
(\rho - c, u) \in L^\infty(0,T;H^s\cap \dot H^{-1}(\R)) \times L^\infty(0,T; H^{s+1}(\R)).
\eq
Moreover, if the solutions $\rho$ and $\pa_x u$ are bounded over the time interval $[0,T]$, i.e. 
\[
|\|\{\rho,u\}|\|_{[0,T]} := \sup_{0 \leq t \leq T}\lt(\|\rho(t)\|_{L^\infty} + \|\pa_x u(t)\|_{L^\infty}\rt) < \infty,
\]
then there is propagation of higher order regularity of $(\rho, u)$ expressed in terms of\newline  $|\|\{\rho(t,\cdot),u(t,\cdot)\}|\|_s:=
\|(\rho - c)(t)\|_{H^s\cap \dot H^{-1}} + \|u(t)\|_{H^{s+1}}$,
 
\bq\label{est_X}
\sup_{0 \leq t \leq T}|\|\{\rho(t,\cdot),u(t,\cdot)\}|\|_s \leq C|\|\{\rho_0(\cdot),u_0(\cdot)\}|\|_s e^{\eta T},
\eq
where $\eta \sim |\|\{\rho,u\}|\|_{[0,T]} + \|\pa_t c\|_{L^\infty(0,T;H^s\cap \dot H^{-1})} + \|\pa_x c\|_{L^\infty(0,T;H^s)} + 1$.
\end{theorem}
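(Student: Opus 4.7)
My plan is to first recast the problem as a closed system for the perturbation $\sigma := \rho - c$ and the velocity $u$. Writing $\partial_x\phi = -\partial_x^{-1}\sigma$, which is well-defined in $L^2$ precisely when $\sigma \in \dot H^{-1}(\R)$, the system \eqref{main_sys} is equivalent to
\[
\partial_t \sigma + u\,\partial_x\sigma + (\sigma + c)\partial_x u + u\,\partial_x c = -\partial_t c, \qquad
\partial_t u + u\,\partial_x u + \nu u = -k\,\partial_x\phi.
\]
This reformulation dictates the choice of spaces in \eqref{eq:ICa}: the condition $\sigma \in H^s \cap \dot H^{-1}$ delivers $\partial_x\phi \in H^{s+1}$ via the Poisson equation, so that the forcing on the right-hand side of the $u$-equation is one derivative smoother than $\sigma$, which is consistent with the propagation of $u \in H^{s+1}$.

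Next, I would build approximate solutions by a standard linear-iteration (alternatively, Friedrichs-mollifier or parabolic-regularization) scheme: given $(\sigma^n, u^n)$, solve the linear transport equation for $\sigma^{n+1}$ along the flow of $u^n$ with the $c$-dependent source terms inserted, and then solve the linear damped equation for $u^{n+1}$ with forcing given by the Poisson potential of $\sigma^{n+1}$. Standard linear theory yields existence at each step, and an inductive check confirms that the iterates stay in $(H^s\cap\dot H^{-1})(\R) \times H^{s+1}(\R)$.

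The heart of the proof is to close three a priori estimates uniformly in $n$. The first is an $H^s$ bound on $\sigma$, obtained by applying $\Lambda^s := (I - \partial_x^2)^{s/2}$ to the continuity equation, pairing with $\Lambda^s\sigma$, and using Kato--Ponce commutator/product estimates to handle $[\Lambda^s,u]\partial_x\sigma$, $\Lambda^s((\sigma+c)\partial_x u)$, and the source terms involving $\partial_x c$ and $\partial_t c$. The second is a $\dot H^{-1}$ bound on $\sigma$: applying $\partial_x^{-1}$ to the continuity equation yields $\partial_t(\partial_x^{-1}\sigma) = -\rho u - \partial_x^{-1}\partial_t c$, and pairing with $\partial_x^{-1}\sigma$ in $L^2$ closes the estimate, the compatibility condition \eqref{comp} guaranteeing that $\partial_x^{-1}\partial_t c \in L^2$. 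The third is an $H^{s+1}$ energy bound on $u$ from the $\Lambda^{s+1}$ estimate on the momentum equation, where one exploits the regularity gain $\|\partial_x\phi\|_{H^{s+1}} \lesssim \|\sigma\|_{H^s \cap \dot H^{-1}}$. Summing these estimates yields a Gronwall-type inequality in $|\|\{\rho,u\}|\|_s$, which produces uniform local-in-time bounds; existence then follows by a standard compactness/contraction argument, and uniqueness by an energy estimate on the difference of two solutions in a lower-regularity norm (say, $L^2 \times H^1$).

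Finally, the continuation bound \eqref{est_X} is obtained by revisiting each of the three estimates and tracking which terms are controlled by the low-regularity quantity $|\|\{\rho,u\}|\|_{[0,T]}$ versus those requiring the full $|\|\{\rho,u\}|\|_s$, together with the $H^s\cap\dot H^{-1}$ regularity assumed on $\partial_t c$ and $\partial_x c$. The principal obstacle is the $\dot H^{-1}$ component: unlike the usual $H^s(\R)$ well-posedness theory for the zero-background EP system, one must verify at each step of the iteration (and in the limit) that $\sigma$ admits a well-defined antiderivative in $L^2$, that the $c$-dependent forcing terms preserve this antiderivative structure---a fact tied to the compatibility condition \eqref{comp}---and that the Poisson recovery $\partial_x\phi = -\partial_x^{-1}\sigma$ remains coherent throughout. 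This is precisely the framework in which the neutrality condition \eqref{def_neu} makes sense as a property propagated by the flow, and constitutes the main novelty over the zero-background case.
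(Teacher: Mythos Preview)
Your proposal is correct and follows essentially the same route as the paper: the authors likewise derive the three a priori estimates (the $\dot H^{-1}$ bound on $\rho-c$ via $\partial_t(\rho-c)=-\partial_x(\rho u)-\partial_t c$, the $H^s$ bound on $\rho-c$ via commutator estimates on the rewritten continuity equation, and the $H^{s+1}$ bound on $u$ using $\|\partial_x\phi\|_{L^2}=\|\rho-c\|_{\dot H^{-1}}$), sum them into a Gr\"onwall inequality, and defer the existence machinery to standard references. One small correction: the fact that $\partial_x^{-1}\partial_t c\in L^2$ is supplied directly by the hypothesis $\partial_t c\in\dot H^{-1}$ in \eqref{cspace}, not by the compatibility condition \eqref{comp} (the latter is in fact a \emph{consequence} of $\partial_t c\in\dot H^{-1}\cap L^1$, as the paper explains just after \eqref{cspace}).
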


We emphasize that the local well-posedness stated in Theorem \ref{thm_apriori} does not necessarily require the neutrality condition \eqref{def_neu} to hold. On the other hand, the following result shows that it is impossible to construct the solution map without $\dot{H}^{-1}$ in \eqref{cls:lwp}. Thus, Theorems \ref{thm_apriori} and \ref{illposed} tell us that it is essential to equip $\rho_0-c_0$ with additional $\dot H^{-1}$ regularity to secure the local well-posedness.

\begin{theorem} [{\bf Non-existence}] \label{illposed}
For the EP system \eqref{main_sys}--\eqref{ini} with $c = \bar{c} > 0$, there exists $(\rho_0,u_0)$ satisfying
\begin{equation*}
        (\rho_0 -\bar{c},u_0) \in H^s(\R)\times H^{s+1}(\R)
\end{equation*}
such that there is no solution to \eqref{main_sys}--\eqref{ini} with initial data $(\rho_0,u_0)$, which fulfills
\[
(\rho -\bar c , u ) \in L^{\infty}\lt([0,\delta];  H^s(\R) \times H^{s+1}(\R)\rt)
\]
for any $\delta >0$.
\end{theorem}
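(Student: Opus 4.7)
The plan is to exhibit initial data for which the Poisson-derived force $\partial_x\phi_0$ cannot be made $L^2$ by any gauge choice, and then to show that no $H^{s+1}$-valued evolution of $u$ can accommodate such a force. Specifically, I take $u_0 \equiv 0$ and $\rho_0 = \bar c + \psi_0$ with $\psi_0 \in C_c^\infty(\R)$ satisfying $0 \le \psi_0 \le \bar c/2$ and $A := \intr \psi_0\,\dx > 0$. Then $(\rho_0 - \bar c, u_0) \in H^s(\R) \times H^{s+1}(\R)$ as required, but $\psi_0 \notin \dot H^{-1}(\R)$ since $\widehat{\psi_0}(0) = A \ne 0$. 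Equivalently, the force $\partial_x\phi_0$ satisfying $-\partial_{xx}\phi_0 = \psi_0$ is smooth and bounded with distinct asymptotic limits at $\pm\infty$ differing by $-A$, so no additive constant renders $\partial_x\phi_0$ square-integrable.

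Suppose, toward contradiction, that a solution $(\rho, u, \phi)$ exists on some $[0,\delta]$ with $(\rho - \bar c, u) \in L^\infty(0, \delta; H^s \times H^{s+1})$. The key intermediate step is that while $\partial_x\phi_0$ is not in $L^2$, the time \emph{increment} of $\partial_x\phi$ always is: from the continuity equation,
\begin{equation*}
\rho(t) - \rho_0 = -\partial_x\int_0^t (\rho u)(s)\,\ds = \partial_x F(t), \qquad F(t) := -\int_0^t \rho u\,\ds,
\end{equation*}
and since $\rho u = (\rho - \bar c) u + \bar c u \in L^\infty(0,\delta; L^2(\R))$ (via $H^{s+1} \hookrightarrow L^\infty$ and the algebra property of $H^s$ for $s \ge 2$), the function $F$ lies in $L^\infty(0, \delta; L^2(\R))$. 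Subtracting the Poisson equations at times $t$ and $0$ then yields
\begin{equation*}
\partial_x\phi(t, x) - \partial_x\phi_0(x) = -F(t, x) + c(t)
\end{equation*}
for some $x$-independent gauge constant $c(t)$.

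Next, integrating the momentum equation pointwise in $x$ from time $0$ to $t$ gives
\begin{equation*}
u(t, x) - u_0(x) + \int_0^t (u\partial_x u + \nu u)(s, x)\,\ds = -k\int_0^t \partial_x\phi(s, x)\,\ds,
\end{equation*}
whose left-hand side lies in $L^2_x(\R)$ for each $t \in (0, \delta]$ (the first summand is in $H^{s+1}$, and the time integrals of $L^2$-bounded functions are in $L^2$). Substituting $\partial_x\phi(s) = \partial_x\phi_0 - F(s) + c(s)$ on the right-hand side and absorbing the $L^2$ $F$-contribution to the left, I obtain
\begin{equation*}
t\,\partial_x\phi_0(x) + C_t \in L^2_x(\R), \qquad C_t := \int_0^t c(s)\,\ds.
\end{equation*}
For $t > 0$, this forces $\partial_x\phi_0 + C_t/t \in L^2(\R)$, which contradicts the defining property of $\psi_0$.

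The main obstacle is the gauge ambiguity of $\phi$: the Poisson equation determines $\partial_x\phi$ only up to an additive constant, and the momentum equation is sensitive to that constant. The argument circumvents this by manipulating exclusively the gauge-compatible difference $\partial_x\phi(t) - \partial_x\phi_0$, whose ambiguity collapses to the single $t$-dependent constant $c(t)$, and by invoking at the very last step the fact that no uniform constant can reconcile the unequal limits of $\partial_x\phi_0$ at $\pm\infty$ when $A \neq 0$. The remaining work—justifying the pointwise-in-$x$ integrated momentum equation via the time regularity of $u$ that follows from the PDE itself—is routine.
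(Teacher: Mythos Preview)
Your argument is correct and takes a genuinely different route from the paper's. The paper works along characteristics: with $u_0\equiv 0$ and $\rho_0-\bar c=g\in L^1\cap H^s$ with $\int g\neq 0$, it uses the explicit formula for the Jacobian $\Gamma(t,\alpha)$ of the flow map (available because the background is constant) to compute
\[
u(t,x(t,R))-u(t,x(t,-R))=\int_{-R}^{R}\partial_t\Gamma(t,\alpha)\,\textnormal{d}\alpha
= \frac{\sin(\sqrt{\bar c}\,t)}{\sqrt{\bar c}}\int_{-R}^{R} g(\alpha)\,\textnormal{d}\alpha,
\]
which does not vanish as $R\to\infty$, contradicting $u(t,\cdot)\in H^{s+1}\subset C_0$. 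Your proof instead stays in Eulerian coordinates and is purely functional-analytic: the continuity equation forces $\rho(t)-\rho_0=\partial_x F(t)$ with $F(t)\in L^2$, so $\partial_x\phi(t)-\partial_x\phi_0$ differs from an $L^2$ function only by a constant; feeding this into the time-integrated momentum equation and using that every other term is $L^2$ yields $t\,\partial_x\phi_0+C_t\in L^2$, impossible since $\int\psi_0\neq 0$. Your approach avoids the characteristic machinery and the explicit Jacobian formula entirely, and would in principle extend to variable backgrounds satisfying \eqref{cspace}; the paper's approach, by contrast, gives an explicit non-vanishing far-field discrepancy in $u$ and identifies its exact time dependence. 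The only point to firm up is the integrability of $c(\cdot)$ on $[0,t]$ needed to write $C_t=\int_0^t c(s)\,\ds$: this follows by pairing the momentum equation with a fixed $\chi\in C_c^\infty(\R)$ with $\int\chi=1$, which expresses $c(t)$ as the distributional time derivative of the bounded function $-k^{-1}\langle u(t),\chi\rangle$ plus a continuous function, so the time integral is well defined.
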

\noindent
In particular, Theorem \ref{illposed} implies that the EP system \eqref{main_sys}--\eqref{ini} is \emph{ill-posed} in $H^s\times H^{s+1}(\R)$. 

\medskip
Our next theorem identifies a proper subset of initial configurations \eqref{eq:ICa}, which secure the propagation of neutrality, a question that seems to be mostly ignored in the literature.
To provide a rigorous treatment of the extended system \eqref{main_sys},\eqref{def_neu}, we introduce proper regularity conditions for the background state $c(t,x)$,  given by
\begin{align}\label{cspace}
\left\{\begin{aligned}
      &\partial_t c \in L^{\infty}(\R_+ ; H^s\cap \dot H^{-1}\cap L^1(\R)), \\
     & \partial_x c \in L^{\infty}(\R_+ ; H^s(\R)).
\end{aligned}\right.
\end{align} 
In particular, the regularity imposed in \eqref{cspace}${}_1$ implies that the background function $c$ satisfies the compatibility condition \eqref{comp}. 
Indeed,  any function $f$ in $\dot{H}^{-1}\cap L^1(\R)$,  has a continuous Fourier transform $\widehat{f}(\cdot)$ such that $|\xi|^{-1}\widehat{f}(\xi)\in L^2(\R)$, \emph{and} in particular, it delivers that $\widehat{f}(0)=0$, i.e., $\displaystyle \intr f(x) \,\dx = 0$. Hence, we deduce $\displaystyle     \int_{\R}\partial_t c(t,x)\dx=0 \ \mbox{a.e.} \ t\in \R_+$, and by Fubini's theorem, we obtain the compatibility \eqref{comp},
\begin{align*}
\begin{aligned}
    \int_{\R}(c(t,x)-c_0(x))\dx&=\int_{\R}\int_{0}^{t}\partial_t c(t,x)\dt\dx =\int_{0}^{t}\int_{\R}\partial_t c(t,x)\dx\dt =0,\quad\forall t>0.
\end{aligned}
\end{align*}


\begin{theorem}[{\bf Well-posedness with neutrality}]\label{thm_neu_apri} 
Suppose that the initial data $(\rho_0,u_0)$ satisfy
\bq\label{eq:ICb}
(\rho_0 - c_0, u_0) \in H^s\cap \dot H^{-1}\cap L^1(\R) \times H^{s+1}\cap BV(\R).
\eq
Then the system \eqref{main_sys}-\eqref{ini} and \eqref{def_neu} admits a classical solution $(\rho(t,\cdot),u(t,\cdot))$ such that
\bq\label{est_l1}
\sup_{0 \leq t < T}\lt(\|(\rho - c)(t)\|_{L^1} + \|(\pa_x u)(t)\|_{L^1}\rt) < \infty
\eq
for some $T>0$.
\end{theorem}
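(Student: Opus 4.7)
The plan is to combine Theorem \ref{thm_apriori} with two additional ingredients: $L^1$-propagation for $\sigma:=\rho-c$ and $d:=\partial_x u$, and a direct verification that the neutrality condition \eqref{def_neu} is preserved by the flow. Since \eqref{eq:ICb} contains \eqref{eq:ICa}, Theorem \ref{thm_apriori} immediately furnishes a unique solution on some $[0,T]$ with $(\rho-c,u) \in L^\infty(0,T;H^s\cap\dot H^{-1})\times L^\infty(0,T;H^{s+1})$. It remains only to upgrade this to the $L^1$ regularity \eqref{est_l1} and to check that $\int_\R(\rho-c)(t,x)\,\dx=0$ for each $t\in[0,T]$.

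For the $L^1$ propagation, I would start from the two evolution equations
\begin{align*}
\partial_t \sigma + \partial_x(\sigma u) &= -c\,\partial_x u - u\,\partial_x c - \partial_t c,\\
\partial_t d + \partial_x(u d) + \nu d &= k\,\sigma,
\end{align*}
obtained respectively from the continuity equation $\partial_t\rho+\partial_x(\rho u)=0$ and from differentiating the momentum equation once in $x$ and substituting the Poisson law $-\partial_{xx}\phi=\sigma$. Renormalizing by $\mathrm{sgn}(\sigma)$ and $\mathrm{sgn}(d)$ and integrating in $x$ (the boundary terms $[|\sigma|u]$ and $[|d|u]$ at $\pm\infty$ vanish since $\sigma,u\in H^1\hookrightarrow C_0(\R)$), and bounding the sources via $\|c\|_{L^\infty}\le c_+$, Cauchy--Schwarz $\int|u\,\partial_x c|\,\dx\le \|u\|_{L^2}\|\partial_x c\|_{L^2}$, and the given $\partial_t c\in L^1$, one obtains
\[
\tfrac{d}{dt}\|\sigma(t)\|_{L^1} \le c_+\|d(t)\|_{L^1}+M(t),\qquad \tfrac{d}{dt}\|d(t)\|_{L^1}\le |k|\,\|\sigma(t)\|_{L^1},
\]
where $M(t):=\|u(t)\|_{L^2}\|\partial_x c(t)\|_{L^2}+\|\partial_t c(t)\|_{L^1}$ is uniformly bounded on $[0,T]$ thanks to Theorem \ref{thm_apriori} and \eqref{cspace}. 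A Gronwall argument applied to $A(t):=\|\sigma(t)\|_{L^1}+\|d(t)\|_{L^1}$ then yields the bound \eqref{est_l1}.

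For the neutrality condition, I would exploit the $\dot H^{-1}\cap L^1$ structure of the data: as already observed in the text, $\sigma_0\in\dot H^{-1}\cap L^1$ forces $\int_\R\sigma_0\,\dx=\widehat{\sigma_0}(0)=0$, and analogously $\int_\R \partial_t c(t,\cdot)\,\dx\equiv 0$ by \eqref{cspace}${}_1$. Integrating the continuity equation $\partial_t\sigma=-\partial_x(\rho u)-\partial_t c$ over $\R$ then yields
\[
\tfrac{d}{dt}\int_\R\sigma(t,x)\,\dx = -\bigl[\rho u\bigr]_{-\infty}^{\infty}-\int_\R\partial_t c\,\dx=0,
\]
since $\rho=\sigma+c$ is bounded and $u(t,x)\to 0$ as $|x|\to\infty$ by the embedding $H^1\hookrightarrow C_0(\R)$. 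Hence $\int_\R\sigma(t)\,\dx=\int_\R\sigma_0\,\dx=0$, which together with the $L^1$ bound from the previous step establishes \eqref{def_neu}. The delicate point is not the coupled Gronwall itself but rather the rigorous $L^1$-renormalization and the vanishing of boundary terms at infinity; both reduce to the decay supplied by $H^1(\R)\hookrightarrow C_0(\R)$ for $\sigma$ and $u$ together with the pointwise bound on $c$ from \eqref{cspace}.
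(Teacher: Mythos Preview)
Your proposal is correct and follows essentially the same strategy as the paper: derive coupled differential inequalities for $\|\rho-c\|_{L^1}$ and $\|\partial_x u\|_{L^1}$, close them by Gr\"onwall using the $H^s\times H^{s+1}$ bounds already supplied by Theorem~\ref{thm_apriori}, and infer neutrality from $\dot H^{-1}\cap L^1$.

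There is one minor but genuine methodological difference worth noting. You use the transport structure by multiplying by $\mathrm{sgn}(\sigma)$ and $\mathrm{sgn}(d)$, so that the principal terms $\partial_x(\sigma u)$ and $\partial_x(ud)$ contribute only vanishing boundary contributions; your resulting inequalities are then purely linear in $\|\sigma\|_{L^1}$ and $\|d\|_{L^1}$. The paper instead bounds these transport terms crudely by $\|\partial_x((\rho-c)u)\|_{L^1}\le\|\rho-c\|_{H^1}\|u\|_{H^1}$ and $\|\partial_x u\|_{L^2}^2+\|u\|_{L^2}\|\partial_{xx}u\|_{L^2}$, treating them as additional source terms controlled by the already-known $H^s$ norm $|\|\{\rho,u\}|\|_s$. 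Your estimate is sharper (it would survive with less regularity on the solution), but the paper's is shorter since it avoids any discussion of renormalization. For the neutrality itself, you compute $\tfrac{d}{dt}\int_\R\sigma\,\dx=0$ directly; the paper simply observes that the solution stays in $\dot H^{-1}\cap L^1$ at each time, which forces $\widehat{\sigma}(t,0)=0$.
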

In view of the $\dot H^{-1}$ regularity of $\rho_0-c_0$ secured in Theorem \ref{thm_apriori} and its integrability in Theorem \ref{thm_neu_apri}, there  follows the  neutrality condition \eqref{def_neu}, $\displaystyle 
\intr (\rho(t,x)-c(t,x))\, \dx =0, \ \forall t\in[0,T)$.
Thus, Theorem  \ref{thm_neu_apri} identifies the set of initial data \eqref{eq:ICb}, 
which give rise to  \emph{classical solution}  $(\rho,u)$ of the  extended system \eqref{main_sys}, \eqref{def_neu}. Again, Theorem  \ref{thm_neu_apri} emphasizes that the density $\rho(t,\cdot)$ cannot be integrable; only $(\rho-c)(t,\cdot)$ is.

\smallskip
In our next Theorem \ref{anomal}, we construct an \textit{anomalous solution}, exhibiting the non-propagation of the neutrality in time even with the neutralized initial data. To circumvent this anomaly, additional regularity must be imposed, for instance, $u_0 \in BV(\R)$. Consult Lemma \ref{intgr} for the discussion on the constant background case, where the necessary and sufficient condition for the propagation of neutrality is presented.
\begin{theorem} 
    [{\bf Non-propagation of neutrality: Anomalous solution}]\label{anomal} Consider the EP system \eqref{main_sys} with $c = \bar{c} > 0$. There exists a initial data $(\rho_0,u_0)$ satisfying 
\[
(\rho_0 -\bar{c},u_0) \in H^s\cap \dot{H}^{-1}\cap L^1(\R)\times H^{s+1}(\R),
\]
such that any solution $(\rho,u)$ of class $\calC^1$\footnote{In this paper, we say $f$ is of class $\calC^1$ if $f \in \calC^1\lt([0,T)\times\R\rt)$ i.e., $|f(t,x)| + |\pa_tf(t,x)|+ |\pa_xf(t,x)|$ is uniformly bounded in $[0,T']\times\R$ for any $T'<T$. We note that any solution constructed in Theorem \ref{thm_apriori} is of class $\calC^1$ by means of Morrey's embedding.} to \eqref{main_sys}--\eqref{ini} with initial data $(\rho_0,u_0)$ does not fulfill the neutrality condition $\eqref{def_neu}$.
\end{theorem}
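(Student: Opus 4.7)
My plan exploits the gap between $H^{s+1}(\R)$ and $BV(\R)$: Theorem~\ref{thm_apriori} only requires $u_0 \in H^{s+1}(\R)$, and there exist such $u_0$ with $\partial_x u_0 \in L^2(\R)\setminus L^1(\R)$ (e.g.\ $u_0(x) = \sin(x)/\sqrt{1+x^2}$). Paired with $\rho_0 \equiv \bar c$, so that $(\rho_0-\bar c,u_0) = (0,u_0) \in H^s\cap\dot H^{-1}\cap L^1(\R) \times H^{s+1}(\R)$ and the initial neutrality holds trivially, Theorem~\ref{thm_apriori} produces a unique local-in-time $\calC^1$ solution $(\rho,u)$ on some interval $[0,T_*)$. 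My goal is to show $\rho(t,\cdot)-\bar c \notin L^1(\R)$ for every small $t > 0$, in direct violation of the first clause of \eqref{def_neu}.

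The pivot is a Lagrangian identity. Let $X(t,x)$ be the characteristic flow, $\partial_t X = u(t,X)$, $X(0,x) = x$, with Jacobian $J(t,x) = \partial_x X(t,x)$. Since $\rho_0 \equiv \bar c$, mass transport gives $\rho(t,X(t,x)) = \bar c/J(t,x)$, and a change of variables yields
\[
\|\rho(t,\cdot)-\bar c\|_{L^1(\R)} = \bar c\int_\R |J(t,x)-1|\,\dx,
\]
so neutrality forces $J(t,\cdot)-1\in L^1(\R)$. Setting $w(t,x) := (\partial_x u)(t,X(t,x))$ and using $-\partial_{xx}\phi = \rho - \bar c = \bar c(1-J)/J$, the Lagrangian system along each characteristic reads
\[
\dot w = -w^2 - \nu w + k\bar c(1-J)/J, \qquad \dot J = Jw, \qquad w(0,x) = \partial_x u_0(x),\ J(0,x) = 1.
\]
The linearization for $h := \log J$ is $\ddot h + \nu\dot h + k\bar c\, h = 0$ with $h(0) = 0$, $\dot h(0) = \partial_x u_0(x)$, and its solution is $h(t,x) = A(t)\,\partial_x u_0(x)$, where $A$ is the impulse response satisfying $A(t) \sim t$ as $t \to 0^+$ and $A(t) \neq 0$ on a maximal interval $(0,T_c)$ with $T_c > 0$. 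Re-exponentiating gives
\[
J(t,x) - 1 = A(t)\,\partial_x u_0(x) + R(t,x).
\]

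The hard part is to prove that $R(t,\cdot) \in L^1(\R)$ with $\|R(t,\cdot)\|_{L^1} = O(t^2)$. Every contribution to $R$ is a nonlinear correction from $w^2$, $(e^{-h}-1+h)$, or $(e^h - 1 - h)$, each a product of at least two $L^2$-factors drawn from $\{w,\,\partial_x u_0,\, J-1,\, u_0\partial_{xx}u_0\}$, and hence in $L^1(\R)$ by Cauchy--Schwarz. A short Gronwall bootstrap for the $L^2$-norms of $w - \partial_x u_0$ and $J - 1$, using the a~priori bound~\eqref{est_X} and the crucial fact that $\rho_0 \equiv \bar c$ makes the $\partial_{xx}\phi$-source in $\dot w$ vanish at $t=0$, delivers the claimed $O(t^2)$ bound on $R$. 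Since $\partial_x u_0 \notin L^1(\R)$ and $A(t) \neq 0$ for every small $t > 0$, this forces $J(t,\cdot)-1 \notin L^1(\R)$, hence $\rho(t,\cdot)-\bar c\notin L^1(\R)$, contradicting \eqref{def_neu} and yielding the anomalous solution.
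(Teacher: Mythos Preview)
Your approach shares the paper's setup---take $\rho_0\equiv\bar c$ and $u_0\in H^{s+1}\setminus BV$, then show via the Lagrangian identity that $J(t,\cdot)-1\notin L^1$---but you work much harder than necessary at the decisive step. You pass to $h=\log J$, linearize its nonlinear second-order equation, and are then left with a remainder $R$ to control. The paper instead observes that $J=\Gamma$ \emph{itself} satisfies an exact linear second-order ODE,
\[
\ddot\Gamma + \nu\,\dot\Gamma + k\bar c\,\Gamma = k\rho_0(\alpha),
\]
obtained by differentiating $\dot\Gamma=(\partial_x u)(t,X)\,\Gamma$ once more and inserting the momentum equation together with $\rho(t,X)=\rho_0/\Gamma$. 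With $\rho_0\equiv\bar c$ this yields $\Gamma(t,\alpha)-1 = A(t)\,u_0'(\alpha)$ \emph{exactly}, where $A$ is precisely your impulse response; hence your remainder $R$ is identically zero and the ``hard part'' evaporates. The paper packages this via the explicit formulas \eqref{gam_1}--\eqref{gam_2} and Lemma~\ref{intgr}, which gives the if-and-only-if statement $\rho(t,\cdot)-\bar c\in L^1 \Leftrightarrow u_0'\in L^1$ directly.

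Your detour through $\log J$ obscures this underlying linearity; had you written the second-order equation for $J$ rather than $\log J$, the nonlinear terms $w^2$ and $(e^{-h}-1+h)$ would have cancelled. The sketched remainder argument (``products of $L^2$-factors, Cauchy--Schwarz, a short Gronwall bootstrap'') is plausible in outline but is not a proof as written---the appearance of $u_0\partial_{xx}u_0$ in your list of factors is unexplained, and the $O(t^2)$ claim needs a genuine iteration. Fortunately none of this is needed once you see that $\Gamma$ is governed by a linear equation.
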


\subsubsection{\textbf{The case of attractive forcing}}

In this part, we discuss how the neutrality condition affects the critical threshold phenomenon of the attractive EP system, which is considered to be irrelevant. The threshold conditions for the attractive EP system were derived earlier in \cite{ELT01, CCZ16, BL20_2}. The sub-critical condition for the existence of a global classical solution of the attractive EP system is given by
\bq\label{eq:var-CT}
    \lt( \frac{ -\nu + \sqrt{\nu^2 +4 c_{-}}}{2} \rt) u_0'(x)  \geq (\rho_0(x)-c_-) \qquad \forall x \in \R,
\eq
where $c_-:=c>0$ for the constant \cite[Theorem 3.2]{ELT01}, \cite[Theorem 2.5]{BL20_2} and $c_-:=\min c(\cdot,\cdot)>0$ for the variable background case \cite[Theorem 2.2]{BL20_2}. In particular, this threshold is sharp for the constant background case; if there exists $x\in\R$ which fails to satisfy the inequality in \eqref{eq:var-CT}, then the solution forms singularity in a finite time. In the following theorem, however, we find that the sub-critical region should be further restricted to its borderline zone ---  initial configurations satisfying \emph{equality} in \eqref{eq:var-CT}, in order for the neutrality condition to be fulfilled.

\begin{theorem}[{\bf Reduction to the borderline}]\label{thm_a_con} Suppose that $(\rho,u)$ is a global-in-time classical solution of the attractive EP system \eqref{main_sys}--\eqref{ini}--\eqref{def_neu}, subject to initial data satisfying \eqref{eq:var-CT}.
Then, the set of admissible $c(\cdot,\cdot)$'s is necessarily restricted to the constant background, namely ---  $c(\cdot,\cdot) = c_{-}$ and  \eqref{eq:var-CT} is reduced the borderline case
\begin{equation}\label{att:cri:neu}
    \lt( \frac{ -\nu + \sqrt{\nu^2 +4 c_{-}}}{2} \rt) u_0'(x)  =  (\rho_0(x)-c_-) \qquad \forall x \in \R.
\end{equation}
\end{theorem}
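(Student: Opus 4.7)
The plan is a rigidity argument built on two ingredients: (i) a Riccati-type propagation identity along characteristics that preserves the sub-critical inequality in time, and (ii) a spatial integration of this propagated inequality which, combined with the neutrality condition \eqref{def_neu}, pins down both the background and the initial profile.

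To set up (i), let $\lambda := (-\nu+\sqrt{\nu^2+4c_-})/2$ --- the positive root of $\lambda^2+\nu\lambda-c_-=0$ --- and define
\[
\phi(t,x) \,:=\, \lambda\,\pa_x u(t,x)\,-\,\bigl(\rho(t,x)-c_-\bigr),
\]
so that the sub-critical hypothesis \eqref{eq:var-CT} reads $\phi(0,\cdot)\ge 0$. Differentiating \eqref{main_sys}${}_2$ in $x$, substituting \eqref{main_sys}${}_3$ with $k=-1$, and writing $v:=\pa_x u$, the characteristic system along $\dot X = u$ becomes
\[
\dot v\,=\,-v^2-\nu v-(\rho-c),\qquad \dot\rho\,=\,-\rho v.
\]
A direct calculation, in which the defining quadratic for $\lambda$ cancels the $v^2$-nonlinearity, produces the \emph{linear} scalar equation
\[
\dot\phi\,=\,(\lambda-v)\,\phi\,+\,\lambda\,(c-c_-).
\]
Since $\lambda>0$ and $c-c_-\ge 0$ pointwise, the variation-of-constants formula then forces $\phi(t,\cdot)\ge 0$ for every $t\ge 0$ along each characteristic, hence globally in $x$.

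For step (ii), fix $t\ge 0$. The regularity $u(t,\cdot)\in H^{s+1}(\R)$ with $s\ge 2$, furnished by Theorem \ref{thm_apriori}, ensures $u(t,\cdot)\to 0$ at $\pm\infty$, so $\intr\pa_x u(t,x)\,\dx = 0$. Integrating $\phi(t,\cdot)\ge 0$ against $\dx$ and invoking the neutrality condition \eqref{def_neu} at time $t$ gives
\[
0\,=\,\lambda\!\intr\pa_x u(t,x)\,\dx\,\ge\,\intr\bigl(\rho(t,x)-c_-\bigr)\,\dx\,=\,\intr\bigl(c(t,x)-c_-\bigr)\,\dx\,\ge\,0.
\]
Hence $\intr(c(t,\cdot)-c_-)\,\dx = 0$. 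Together with $c\ge c_-$ and the spatial continuity of $c$ (from $\pa_x c\in L^\infty_t H^s_x$ in \eqref{cspace}), this forces $c(t,x)\equiv c_-$ for all $(t,x)$. Equality throughout the chain then yields $\phi(t,\cdot)\equiv 0$; specializing to $t=0$ is precisely the borderline identity \eqref{att:cri:neu}.

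The main technical point is the algebraic derivation of the linear equation for $\phi$: one must pair its definition with both characteristic ODEs and invoke $\lambda^2+\nu\lambda-c_-=0$ to annihilate the Riccati nonlinearity. Crucially, in the variable-background regime the naive invariance of the line $\{\phi=0\}$ is lost, but the perturbation $c-c_-\ge 0$ enters the evolution of $\phi$ as a \emph{non-negative source}, which is exactly what is needed to preserve $\phi\ge 0$ --- and this one-sided propagation is what enables the rigidity conclusion above.
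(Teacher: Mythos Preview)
Your argument is correct and follows essentially the same route as the paper: propagate the sub-critical inequality $\phi(t,\cdot)\ge 0$ in time, then integrate in $x$ and combine the vanishing of $u$ at infinity with the neutrality condition to force $c\equiv c_-$ and equality in \eqref{eq:var-CT}. The only difference is that you supply a self-contained derivation of the propagation step via the linear scalar ODE $\dot\phi=(\lambda-v)\phi+\lambda(c-c_-)$, whereas the paper imports this invariance from the prior sub-critical result \cite[Theorem 2.2]{BL20_2}; the spatial integration argument (using $u(t,\pm\infty)=0$ and \eqref{def_neu}) is identical in both.
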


\begin{remark}
Theorem \ref{thm_a_con} shows that in order to entertain the neutrality condition, the sub-critical criteria for variable backgrounds, \eqref{eq:var-CT}, is too restrictive in the sense that it will rule out all but the borderline cases. In particular, for the constant background case $c=c_-$,  the solution $(\rho, u)$ of neurality-satisfying attractive EP system has global-in-time regularity if and only if its initial data $(\rho_0, u_0)$ satisfies \eqref{att:cri:neu}.
\end{remark}

This reduction to the borderline is a main theme in the attractive case. We demonstrate this in the context of variable background, $c=c(t,x)$. Now assume that $|u_0(x)|\stackrel{|x|\rightarrow \infty}{\longrightarrow}0$.
 \sout{Multiply \eqref{att_sub} by $\lt(\frac{2 c_{-}}{ \nu + \sqrt{\nu^2 +4 c_{-}}} \rt)$ and} We integrate \eqref{eq:var-CT} to find
\bq\label{att_sub-x}
\lt( \frac{ -\nu + \sqrt{\nu^2 +4 c_{-}}}{2}\rt) u_0(x){\Big|}^R_{-R} \ge  \int \limits_{-R}^R \big(\rho_0(x)-c_0(x)\big)\dx + \int \limits_{-R}^R \big(c_0(x)-c_-\big)\dx. 
\eq
By assumption, $u_0(x)\Big|^R_{-R}$  vanishes as $R\uparrow \infty$ (in fact, equals zero on the periodic case) and the neutrality condition yields $\displaystyle \int_{\R}  \big(c_0(x)-c_-\big)\dx = 0$, which implies  $c_0(x)=c_{-}$ and we get equality at \eqref{att_sub-x}${}_{R\rightarrow \infty}$. By assumption, both sides tend to zero with $R\uparrow \infty$, hence the equality \eqref{eq:var-CT} takes place almost everywhere. Indeed, for initial data satisfying \eqref{eq:var-CT}, we have
\[
\lt( \frac{ -\nu + \sqrt{\nu^2 +4 c_{-}}}{2} \rt) \pa_x u(t,x)  \geq (\rho(t,x)-c_-) \qquad \forall x \in \R,
\]
so that we can repeat the previous argument for each $t>0$; in particular, we get $c(\cdot,\cdot) = c_{-}$. In Section \ref{sec_att}, we recover the critical threshold results for attractive EP systems with constant/variable backgrounds via the Lyapunov function method. It captures the solution trajectories in a geometric way so that it can be more intuitively understood than the previous approaches. More importantly, a notable feature of the Lyapunov-based approach is that it goes beyond the attractive forcing case; it can be employed in obtaining the critical thresholds for the repulsive EP systems, in particular, with variable backgrounds.

\subsubsection{\textbf{The case of repulsive forcing}}\label{ssec:rep}

We now turn our attention to the study of 
critical thresholds for the repulsive EP systems. Unlike the attractive interaction case, there is no literature on the sub-critical criteria for the repulsive EP system in the variable backgrounds. We provide the first sub-critical results in this case. In order to present our results more precisely, we must introduce the method of phase plane analysis. To this end, we consider the auxiliary variables $(s,w)$ (introduced by H. Liu, S. Engelberg and the last author \cite{ELT01} in their study of critical thresholds  of EP system  with constant backgrounds),
\[
s := \frac{1}{\rho}, \quad \mbox{and}\quad w := \frac{\pa_x u}{\rho}.
\]
Let $\square':= \dot{\square}(t,x(t))$ denote differentiation along particle path 
$x(t)=x(t; \alpha)$,
\begin{equation}
    \label{eq:chr}
\dot{x}(t;\alpha)=u(t,x(t; \alpha)), \ x(0;\alpha)=\alpha.
\end{equation}
We find that (abusing notations) $w(t)=w(t,x(t))$ and $s(t)=s(t,x(t))$ satisfy
\begin{equation}\label{reduced_eq}
\lt\{\begin{split}
w' &= -\nu w + k(1 - c s) \\
s' &= w. 
\end{split}\rt.
\end{equation}

We observe that the question of global-in-time regularity vs. finite-time breakdown is reduced to whether $s(t)$ attains zero --- $\rho(t,\cdot)$ blows up ---  in a finite time. There are two classes of initial configurations $(w_0,s_0)$ of our interest. We say $\Sigma^\flat$ is a sub-critical region of \eqref{reduced_eq} if it is an \emph{invariant} set, i.e., $(w_0,s_0)\in \Sigma^\flat$ implies $(w(t),s(t)) \in \Sigma^\flat$ for all $t>0$. On the other hand, we say $\Sigma^\sharp$ is a super-critical region of \eqref{reduced_eq} if for each $(w_0,s_0)\in \Sigma^\sharp$, there exists $T_*>0$ such that $s(T_*)=0$. Hence, the set of sub/super-critical region gives rise to the criteria of initial data $(\rho_0,u_0)$ for which the local-in-time classical solution of \eqref{main_sys}--\eqref{ini} becomes global or forms singularity in a finite time. Lastly, we say the critical threshold is \emph{sharp} if
\[
\R\times\R_{+} = \Sigma^\flat \cup \Sigma^\sharp.
\]

For the constant backgrounds, where the solutions to \eqref{reduced_eq} can be obtained explicitly, the sharp critical threshold was obtained in \cite{ELT01} for the undamped case, precisely given as:
\bq\label{CT:rep:con}
\Sigma^\flat=\lt\{-\sqrt{s(2-\bar c s)}<w<\sqrt{s(2-\bar c s)}\rt\}, \ \Sigma^\sharp = \R\times\R_+\setminus \Sigma^\flat,
\eq
where $c = \bar c >0$ denotes the background. Recently, Bhatnagar and Liu obtained the sharp critical thresholds for the damped case \cite{BL20}.

For the variable backgrounds, however, we note that $c=c(t,x(t,\alpha))$ in \eqref{reduced_eq}${}_2$ varies along the characteristic and the system \eqref{reduced_eq} is not closed. We overcome this non-locality by exploiting the uniform bounds of $c$. We compare the system \eqref{reduced_eq} to the one associated with its extremes $c=c_\pm$, and we establish the critical thresholds for the variable background by the solutions originating from the constant backgrounds. This idea --- a Lyapunov-based approach --- is widely used for attractive and repulsive force in Sections \ref{sec_att} and \ref{sec_rep}. By nature of this comparison method, all our results in the sequel are sharp in the sense that we can recover the sharp critical threshold results for the constant backgrounds once we set $c_-=c_+$.

Throughout this paper, we use
\[
\calL(w,s) = w + f(s)
\]
as a general form of the Lyapunov function. $f$ will be chosen so as to make the \emph{sign} of $\calL$ determine the critical threshold regions. In particular, $\calL(w,s) = 0$ designates the boundary of sub/super-critical regions --- often, we will call this by threshold line. 

To obtain the threshold lines of the (damped) repulsive EP system, we make use of the following Lyapunov functions,
\bq\label{lya}
\calL_P(w,s):= w + \sqrt{2P(s)}, \ \ \calL_N(w,s):= w - \sqrt{2N(s)},
\eq
where $P$ and $N$ are the maximal solutions of the following ODEs
\bq\label{aux_P}
\frac{dP}{ds} = \nu\sqrt{2P(s)} + 1 - {c_1} s, \ \ \ P(0)=0,
\eq
\bq\label{aux_N}
\frac{dN}{ds} = -\nu\sqrt{2N(s)} + 1 - {c_2} s , \ \ \ N(s_*)=0,
\eq
with constants $c_1,c_2>0$, $\nu\ge0$ and $s_* > \frac{1}{c_2}$. The choice of $c_1$ and $c_2$ will depend on the context, either $\bar c$ for the constant background or $c_\pm$ for the variable background case.

To exploit these Lyapunov functions, we characterize the domains of $\sqrt{2P(\cdot)}$ and $\sqrt{2N(\cdot)}$.   By abusing notation, we denote the domain of $\sqrt{2P(\cdot)}$ by $\textnormal{Dom}(P)$. We appeal the parallel statement to $N$. Indeed, the domains are characterized as
\bq\label{dom}
\textnormal{Dom}(P) = \begin{cases}
[0,\infty) \quad &\nu \ge 2\sqrt{c_1}\\
[0,\tilde{s}] \quad &0 \le \nu < 2\sqrt{c_1}
\end{cases}, \qquad \textnormal{Dom}(N) = \begin{cases}
(-\infty,s_*] \quad &\nu \ge 2\sqrt{c_2}\\
[s_{**},s_*] \quad &0 \le \nu < 2\sqrt{c_2}
\end{cases},
\eq
where
\bq\label{str}
\tilde{s}=\frac{1+e^{\gamma_1}}{c_1}, \ \ \ s_{**} = \frac{1}{c_2} - \lt(s_*-\frac{1}{c_2}\rt)e^{\gamma_2}, \ \ \ \gamma_i:= \frac{\pi\nu}{\sqrt{4c_i-\nu^2}},\ \ \ i\in\{1,2\}.
\eq
In the context of constructing the sub/super-critical regions, we always choose $s_*:= \tilde{s}$ so as to make these regions as large as possible. Here, we mention that the formula of $s_{**}$ plays a vital role in establishing the critical regions.

\noindent

The main novelty of this approach originated with M. Bhatnagar \& H. Liu in \cite{BL20},  with the purpose of replacing an explicit \emph{algebraic description} of the critical threshold, by a \emph{differential description} expressed in terms of a maximal solution of the ODE in $w$--$s$ plane:
\bq\label{aux_Q}
\displaystyle \frac{d Q(s)}{ds} = \nu + \frac{1 -  \bar c s}{Q(s)}.
\eq
Indeed, while $\sqrt{2P}$ and $-\sqrt{2N}$ coincide with  the maximal solution $Q$ of \cite{BL20} in the case of  $c_1=c_2=\bar c$, here we make a distinction between $\sqrt{2P}$ and $-\sqrt{2N}$, as we trace the the \emph{sign} of $Q$, corresponding to  the positive and negative solution.
In Section \ref{sec_rep}, we provide precise characterization of domains \eqref{dom} -- \eqref{str}, replacing the  graphical representation associated with \eqref{aux_Q} in \cite{BL20}.

\vspace{0.2cm}
\noindent
{\bf Comparison Principle.} We now introduce our comparison principles, deploying the Lyapunov functions \eqref{lya}, to establish the critical thresholds. For this, we denote $P_\pm$ by the solution of \eqref{aux_P} with $c_1=c_\pm $ and $N_\pm$ by that of \eqref{aux_N} with $c_2 = c_\pm$.   The \emph{weak} comparison principle is given as follows.
\bq\label{CP:sup}
\begin{split}
\calL_{P_-}(w(t),s(t)) \le 0 \ \ \ &\mbox{if} \ \ \ \calL_{P_-}(w_0,s_0) \le 0, \\
\calL_{N_+}(w(t),s(t)) \ge 0 \ \ \ &\mbox{if} \ \ \ \calL_{N_+}(w_0,s_0) \ge 0
\end{split}
\eq
provided that 
\[
\forall \tau\in[0,t], \ \ \  s(\tau) \in \textnormal{Dom}(P_-) \ \  \lt(\textnormal{resp.} \ \ s(\tau) \ge 0 \ \ \textnormal{and} \ \ s(\tau) \in \textnormal{Dom}(N_+)\rt).
\]
This gives rise to the construction of super-critical regions.
\[
\Sigma^{\sharp}:= \R\times\R_+ \setminus \lt\{-\sqrt{2P_-(s)}<w\rt\} \ \ \textnormal{for} \ \ \nu\ge2\sqrt{c_-}, \ \mbox{and}
\]
\bq\label{CT:rep:sup:wd}
\Sigma^\sharp:=\R\times\R_+ \setminus \lt\{-\sqrt{2P_-(s)}<w<\sqrt{2N_+(s)}\rt\} \ \ \textnormal{for} \ \ 0\le\nu<2\sqrt{c_-}.
\eq
On the other hand, we have \emph{strong} comparison principle. 
\bq\label{CP:sub}
\begin{split}
\calL_{P_+}(w(t),s(t)) > 0 \ \ \ &\mbox{if} \ \ \ \calL_{P_+}(w_0,s_0) > 0, \\
\calL_{N_-}(w(t),s(t)) < 0 \ \ \ &\mbox{if} \ \ \ \calL_{N_-}(w_0,s_0) < 0
\end{split}
\eq
as long as
\[
\forall \tau\in[0,t], \ \ \  s(\tau) \in \textnormal{Dom}(P_+) \ \  \lt(\textnormal{resp.} \ \ s(\tau) \ge 0 \ \ \textnormal{and} \ \ s(\tau) \in \textnormal{Dom}(N_-)\rt).
\]
It leads  us to define 
\[
\Sigma^\flat:= \lt\{-\sqrt{2P_+(s)}<w\rt\}  \ \ \textnormal{for} \ \ \nu\ge2\sqrt{c_+}, \ \mbox{and}
\]
\bq\label{CT:rep:sub:wd}
\Sigma^\flat:= \lt\{-\sqrt{2P_+(s)}<w<\sqrt{2N_-(s)} \rt\}  \ \ \textnormal{for} \ \ 0\le\nu<2\sqrt{c_+}.
\eq
It is not always the case that the set $\Sigma^\flat$ \eqref{CT:rep:sub:wd} is sub-critical; some extra conditions should be met to secure the sub-criticality.

\vspace{0.2cm}
\noindent
{\bf Closing Condition.} 
In order to prove that $\Sigma^\flat$  \eqref{CT:rep:sub:wd} and $\Sigma^\sharp$ \eqref{CT:rep:sup:wd} are \emph{indeed} sub/super-critical regions, we require 
\bq\label{closing}
s_{**} \le 0
\eq
to hold. We call \eqref{closing} by \emph{closing condition}. The term \emph{closing} is motivated by the geometric depiction that the $\Sigma^\flat$ and $\R\times\R_+ \setminus \Sigma^\sharp$ are enclosed by its boundaries as in Figures \ref{fig:rep:sup:ud} and \ref{fig:CT:all}.

The closing condition plays a key role in constructing a sub/super-critical region. We illustrate this by demonstrating that $\Sigma^\sharp$ in \eqref{CT:rep:sup:wd} is a super-critical set. We readily check that the closing condition is validated for $\Sigma^\sharp$. Indeed, by assigning $(c_1,c_2)=(c_-,c_+)$ to \eqref{str}, we obtain 
\[
s_* \ge \frac{2}{c_-}>\frac{1}{c_+},\ \ \ \mbox{and}\ \ \ s_{**} \le \frac{1}{c_+}- \lt(\frac{2}{c_-}- \frac{1}{c_+}\rt) \le 0.
\]
In particular, it implies that $\calL_{N_+}(w,s)=0$ line crosses $w$-axis at $\sqrt{2N_+(0)}\ge0$ so that the boundary of $\Sigma^\sharp$ can be decomposed as:
\[
\{ \calL_{P_-}(w,s)=0 \} \cup \{ \calL_{N_+}(w,s)=0, s\ge0\} \cup \{w\ge \sqrt{2N_+(0)},s=0\} \cup \{w\le 0, s=0\}.
\]

By weak comparison principle \eqref{CP:sup}, we deduce that if $(w_0,s_0)\in \Sigma^\sharp$, then $(w(t),s(t))$ can escape $\Sigma^\sharp$ only through $s=0$ line (with $w\le0$ because $s(t)'=w(t)\ge \sqrt{2N_+(0)}>0$ on the right part of horizontal boundary.) In fact, for each $(w_0,s_0)\in \Sigma^\sharp$, there exists $t_*<+\infty$ such that $s(t_*)=0$. Consult  Proposition \ref{rep_blo_1},  \ref{rep_blo_2}. It proves that $\Sigma^\sharp$ is a super-critical set. Hence, we obtain a super-critical condition for the undamped repulsive EP with variable backgrounds.  

\begin{theorem}[{\bf Finite-time breakdown}] 
\label{thm:rep:sup}Consider the repulsive EP system \eqref{main_sys}--\eqref{ini}. For a given initial data $(\rho_0, u_0)$, if there exists $x\in\R$, which does not satisfy:
 \begin{align*}
&\bullet &  \qquad \ 
 &-\rho_0(x)\sqrt{2P_-\lt(\frac{1}{\rho_0(x)}\rt)} < u_0'(x)   \  &\textnormal{when}& \ \   \nu \geq 2\sqrt{\cmin};\\
& \bullet  &  \ \  
 &-\rho_0(x)\sqrt{2P_-\lt(\frac{1}{\rho_0(x)}\rt)} < u_0'(x)  < \rho_0(x)\sqrt{2N_+\lt(\frac{1}{\rho_0(x)}\rt)}   \ &\textnormal{when}& \ \ 0 \leq \nu < 2\sqrt{\cmin},
\end{align*}
then the solution $(\rho, u)$ will lose $\calC^1$ regularity in a finite time. 
\end{theorem}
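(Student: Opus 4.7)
The plan is to reduce the statement, via the characteristic \eqref{eq:chr} emanating from the prescribed point $x$, to a claim about the auxiliary system \eqref{reduced_eq}: that the scaled inverse density $s(t) = 1/\rho(t,x(t))$ attains the value $0$ in finite time. Since $\rho = 1/s$, this forces $\rho(t_*,x(t_*)) = +\infty$ at some $t_* < \infty$, contradicting the $\calC^1$ regularity of $\rho$. The translated initial configuration $(w_0, s_0) = \lt(u_0'(x)/\rho_0(x),\, 1/\rho_0(x)\rt)$ lies in $\Sigma^\sharp$ of \eqref{CT:rep:sup:wd} precisely because of the failure of the displayed sub-critical inequalities at $x$.

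I would then invoke the weak comparison principle \eqref{CP:sup} to show that the trajectory cannot leave $\Sigma^\sharp$ through its curved boundaries. Although $c(t,x(t))$ entering \eqref{reduced_eq} is now time-dependent, the pinching $\cmin \le c(t,x(t)) \le c_+$ is sufficient: the auxiliary ODE \eqref{aux_P} with $c_1 = \cmin$ gives a $P_-$ tailored so that $\calL_{P_-}(w(\cdot),s(\cdot))$ remains non-positive while $s \in \textnormal{Dom}(P_-)$, and in the weakly damped regime $0 \le \nu < 2\sqrt{\cmin}$ the same mechanism applied to \eqref{aux_N} with $c_2 = c_+$ keeps $\calL_{N_+}(w(\cdot),s(\cdot)) \ge 0$ on $\textnormal{Dom}(N_+)$. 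Together with the closing condition $s_{**}\le 0$ --- already verified in the excerpt for the choice $(c_1,c_2)=(\cmin,c_+)$ --- this confines the trajectory to $\Sigma^\sharp$ and identifies $\{s=0,\, w\le 0\}$ as the only admissible exit portion of $\partial\Sigma^\sharp$.

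The finite-time conclusion then follows by quantifying the descent of $s$: on $\Sigma^\sharp$ one has $w(t) \le -\sqrt{2P_-(s(t))}$, so
\[
s'(t) = w(t) \le -\sqrt{2P_-(s(t))} < 0 \quad \text{so long as } s(t) > 0,
\]
and since $P_-(0)=0$ with $\frac{dP_-}{ds}(0) = 1$, the behavior $P_-(\sigma) \sim \sigma$ near $0$ makes the comparison integral
\[
t_* \;\le\; \int_0^{s_0}\frac{d\sigma}{\sqrt{2P_-(\sigma)}}
\]
finite. At $t = t_*$ one has $s(t_*)=0$, so $\rho(t_*,x(t_*)) = +\infty$ and $\calC^1$ regularity breaks down. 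This is essentially the content of the referenced Propositions \ref{rep_blo_1}--\ref{rep_blo_2}.

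The main obstacle is the non-autonomy of \eqref{reduced_eq}: the classical phase-plane analysis of \cite{ELT01} or \cite{BL20} is not directly applicable because the $c$-coefficient evolves along the characteristic in an a priori uncontrolled manner. The Lyapunov/comparison framework bypasses this by replacing explicit solution curves with one-sided differential inequalities driven by the pinching $c_\pm$; the closing condition $s_{**} \le 0$, together with the explicit domain description \eqref{dom}--\eqref{str}, then prevents the trajectory from slipping outside $\textnormal{Dom}(P_-)$ or $\textnormal{Dom}(N_+)$ before $s$ reaches zero. Once these two geometric-analytic ingredients are in place, finite-time breakdown is a routine ODE integration.
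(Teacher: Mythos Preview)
Your reduction via characteristics and the weak comparison principle is exactly right, and for the strongly damped regime $\nu \ge 2\sqrt{\cmin}$ your argument matches Proposition~\ref{rep_blo_1} essentially verbatim.

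The gap is in the weakly damped case $0 \le \nu < 2\sqrt{\cmin}$. You write ``on $\Sigma^\sharp$ one has $w(t) \le -\sqrt{2P_-(s(t))}$,'' but this is false: $\Sigma^\sharp$ also contains the \emph{upper} branch $\{w \ge \sqrt{2N_+(s)}\}$, where $w > 0$ and hence $s' = w > 0$, so $s$ is \emph{increasing}. Your descent integral $\int_0^{s_0} d\sigma/\sqrt{2P_-(\sigma)}$ says nothing there, and confinement to $\Sigma^\sharp$ alone does not rule out the trajectory circling indefinitely in that half. The paper's Proposition~\ref{rep_blo_2} closes this with a three-region decomposition $\Sigma^\sharp = \Sigma^\sharp_1 \cup \Sigma^\sharp_2 \cup \Sigma^\sharp_3$: on $\Sigma^\sharp_3$ the comparison with $N_+$ drives $s$ up to $s_*$ in bounded time; on $\Sigma^\sharp_2 = \{w>0,\, s\ge s_*\}$ a separate \emph{angular} Lyapunov function
\[
\theta_-(w,s) = \arctan\!\lt(\frac{\sqrt{\cmin}\,(s - 1/\cmin)}{w}\rt)
\]
is shown to satisfy $\theta_-' \ge \sqrt{\cmin}$, forcing $w$ down to zero within time $\pi/(2\sqrt{\cmin})$; only then does the trajectory land in $\Sigma^\sharp_1$, where your descent argument applies. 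Even on $\Sigma^\sharp_1$ one must take care when $s_0 > \tilde{s}$ lies outside $\textnormal{Dom}(P_-)$: the paper shifts the auxiliary ODE to $P_*(a)=0$ with $a = \max\{s_0, s_*\}$ to make the integral well-defined. So ``routine ODE integration'' undersells it --- the angular mechanism is a genuinely new ingredient not supplied by the comparison principle or the closing condition.
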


\begin{remark}
For the undamped $\nu=0$ case, we have
\bq\label{CT:rep:sup:ud}
\Sigma^\sharp= \R\times\R_+ \setminus \lt\{- \sqrt{s(2-c_-s)} < w < \sqrt{s(2-c_{+}s)+\frac{4}{c_{-}}\lt(\frac{c_{+}}{c_{-}}-1\rt)}\rt\}.  
\eq
Note that $\Sigma^\sharp$ coincides with \eqref{CT:rep:con} if we set $c_-=c_+=\bar c$. Moreover, it improves previously known super-critical results for repulsive EP systems with variable backgrounds \cite{BCKpre, L06}. In the phase plane coordinates, these previous super-criteria can be expressed as $s \ge  \frac2{c_{-}}$ and $w+\sqrt{2s} \le 0$, respectively. In fact, \eqref{CT:rep:sup:ud} contains these two regions. We display this in Figure \ref{fig:rep:sup:ud} for the reader's convenience.
\end{remark}

\begin{figure}[h!]
\hspace{-1.2cm}
\begin{subfigure}[]{0.4\textwidth}
    \includegraphics[scale=.4]{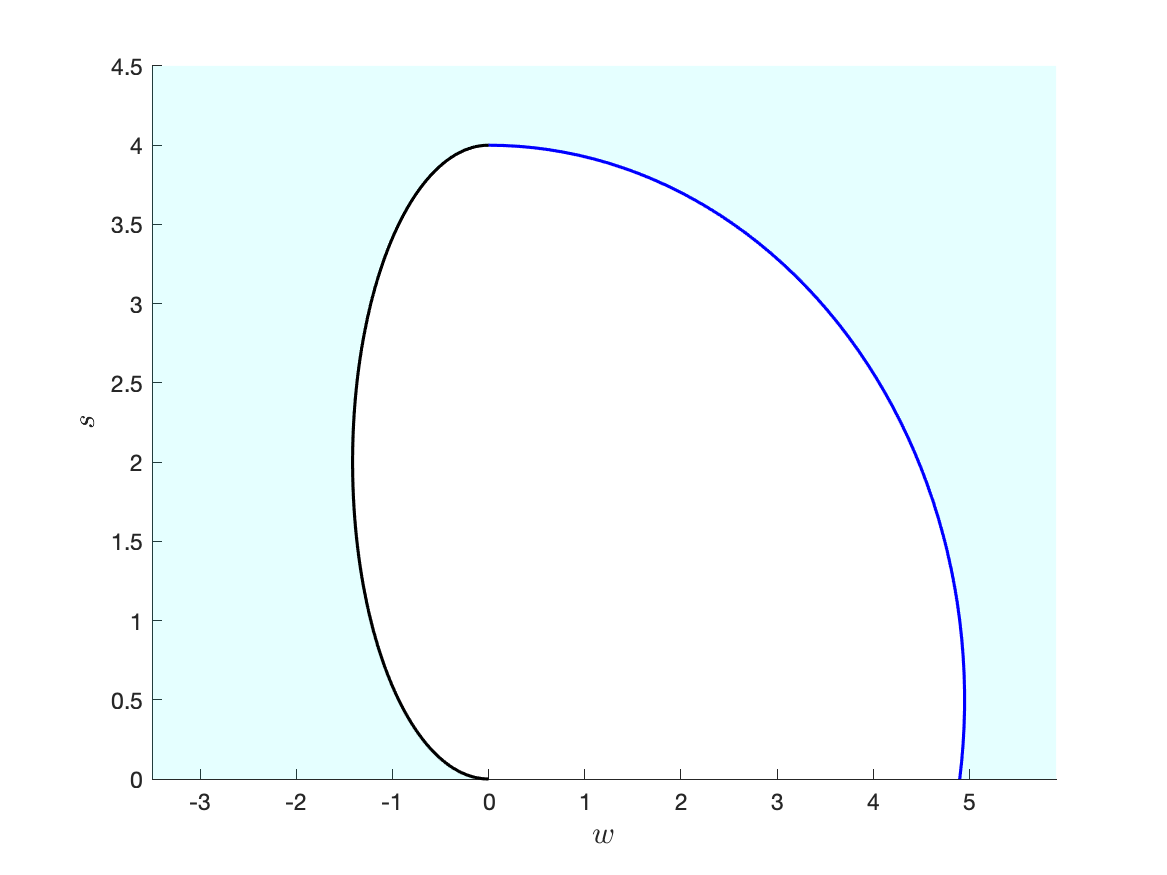}
    \caption{The super-critical set in  \eqref{CT:rep:sup:ud}}
\end{subfigure}
\hspace{1.2cm}
\begin{subfigure}[]{0.4\textwidth}
    \includegraphics[scale=.4]{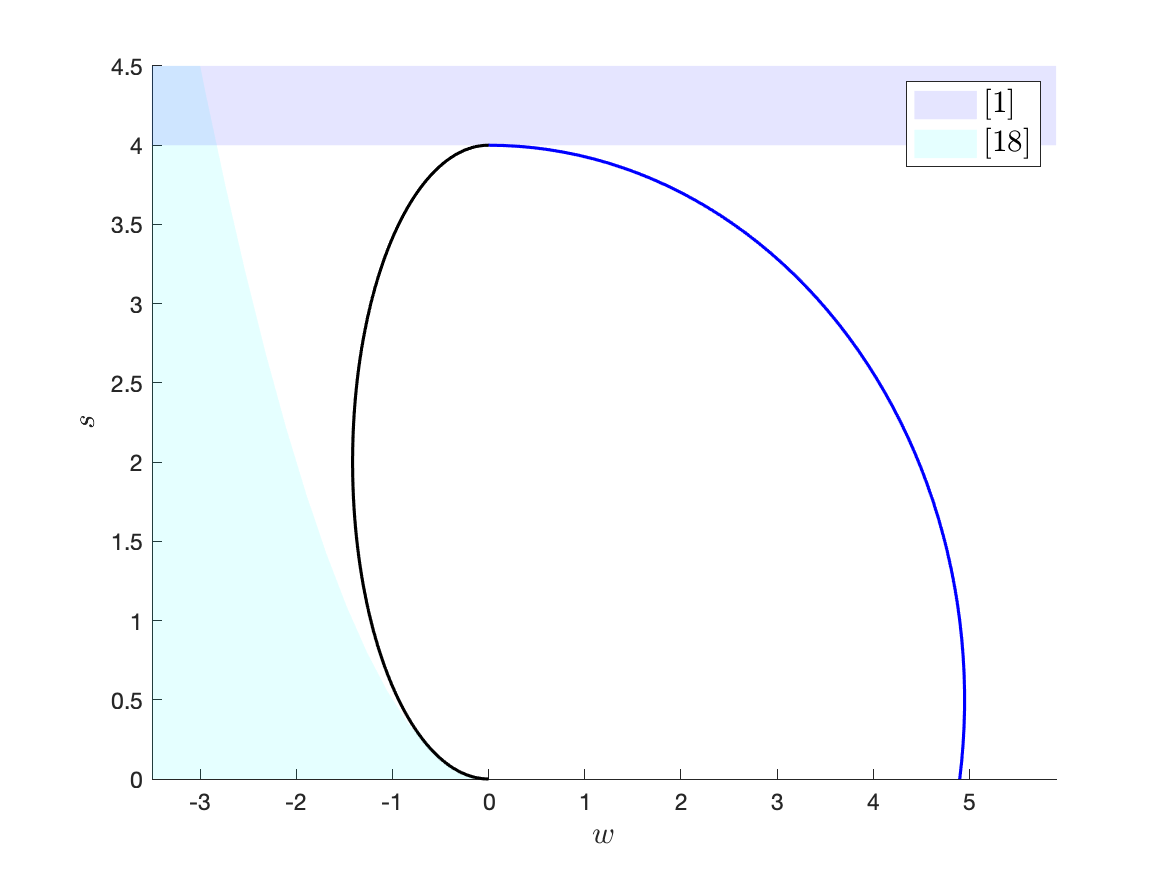}
    \caption{{\color{violet}Violet}: $s\ge 2/{c_-}$, {\color{cyan} Blue}: $w+\sqrt{2s}\le 0$}
\end{subfigure} 
\caption{Ilustration of the super-critical regions}
	\label{fig:rep:sup:ud}
\end{figure}

\vspace{0.2cm}

We now turn our attention to the sub-critical case. The closing condition can also be exploited to show that $\Sigma^\flat$ in \eqref{CT:rep:sub:wd} is a sub-critical set. Once we assume the closing condition hold for $\Sigma^\flat$, then the boundary of $\Sigma^\flat$ can be decomposed as
\bq\label{bdy:sub}
\lt\{\calL_{P_+}(w,s)=0 \rt\} \cup \lt\{\calL_{N_-}(w,s)=0, s > 0\rt\} \cup \lt\{0 <  w \le \sqrt{2N_-(0)}, s=0 \rt\}.
\eq
Note that $\Sigma^\flat$ is an open set, and we can prove that any trajectory issued inside this set cannot hit the boundary by strong comparison principle. Consult Proposition \ref{rep_glo_2} for the details.

Unlike the super-critical set, the closing condition is not satisfied for $\Sigma^\flat$ in \eqref{CT:rep:sub:wd} in general. For instance, if $\nu =0$ with $c_-<c_+$, then the closing condition is violated because
\[
s_{**} = \frac{2}{c_-}-\frac{2}{c_+}>0.
\]

In fact, the closing condition is not merely an artifact of technique but reflects the difficulties in obtaining sub-critical regions for the repulsive EP system. To exhibit this, we introduce the following system of ODEs.

\begin{equation}\label{test_sys}
\begin{cases}
w' = 1-(1+\epsilon \sin(t))s, \\
s' = w, \\
\end{cases}
\end{equation}
with $(w_0,s_0) = (0,1)$. Although the deviation of \emph{background} from $1$ is assumed to be small as $|(1+\epsilon \sin(t))-1|\le \epsilon$, the oscillation of $(w(t),s(t))$ and $c(t)$ can be aligned so that it makes a \emph{resonance}. In particular, $s(t)$ attains zero in a finite time. We implemented the numerical experiment of \eqref{test_sys} with $\epsilon=0.05$ using the built-in ode45 Matlab command. See Figure \ref{fig_gl_rmk} for its depiction. This example convinces us that the closing condition is necessary to construct a sub-critical region.

\begin{figure}[h]
\begin{center}
\includegraphics[scale=.4 ]{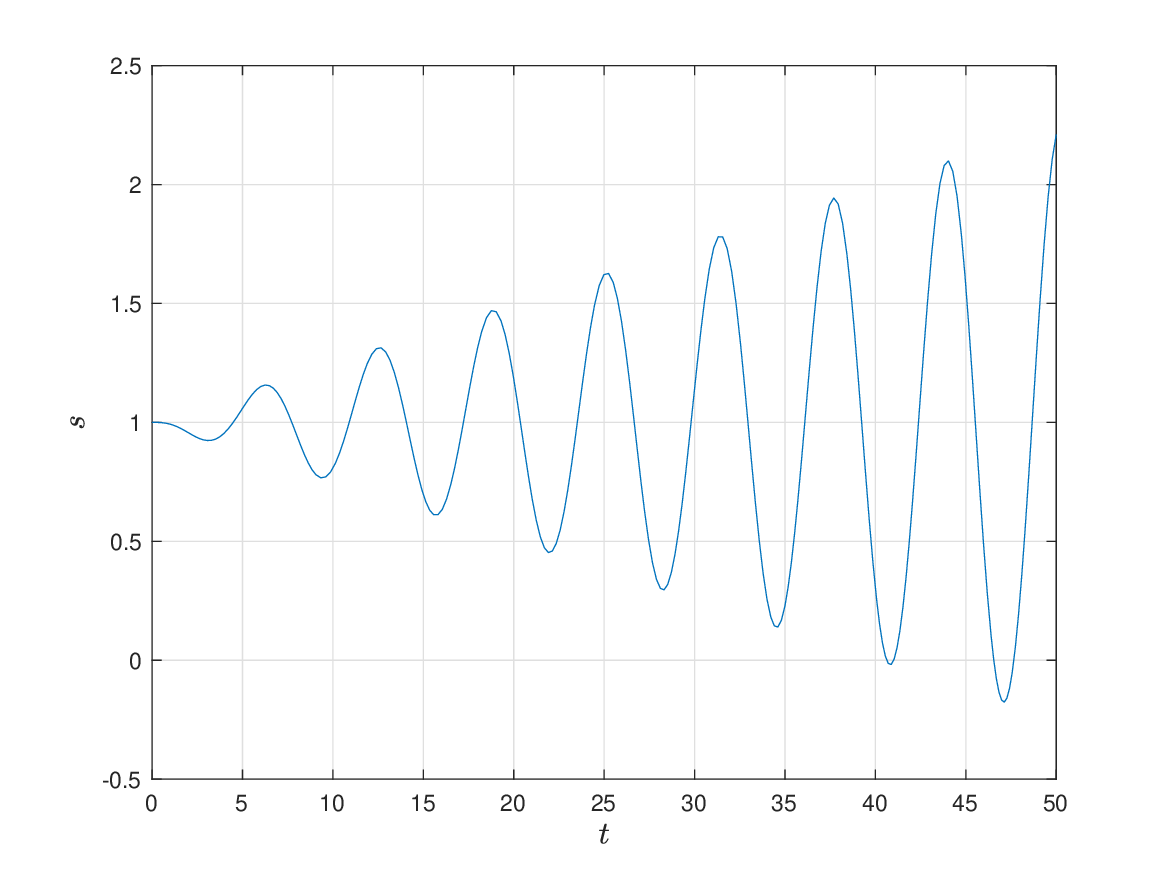}
\quad
\includegraphics[scale=.4 ]{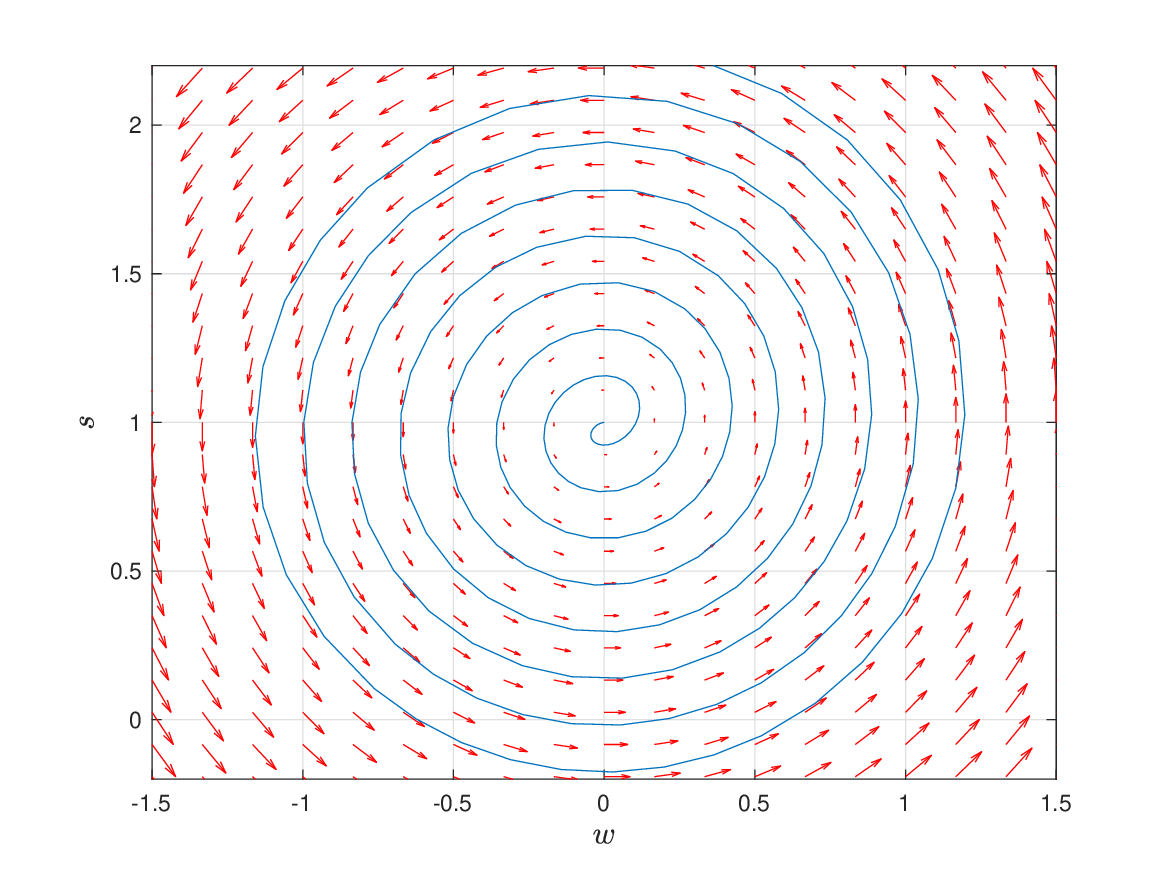}
\caption{The evolution of $s$ (left) and the phase plane plot (right) of the solution trajectory of \eqref{test_sys}.}
\label{fig_gl_rmk}
\end{center}
\end{figure}

In the presence of damping ($\nu > 0$), the closing condition always holds for $\nu\ge2\sqrt{c_+}$ case. For $0<\nu < 2\sqrt{c_+}$ case, the closing condition can be entertained by imposing suitable conditions on $c_+$, $c_-$, and $\nu$. The heuristic is that the ratio of $c_+$ over $c_-$ is close enough to $1$, and the damping effect is large enough to control the oscillation so that the system is similar to that of a constant background case.
 
\begin{theorem}[{\bf Global-in-time regularity}]\label{thm:rep:sub}Consider the repulsive EP system \eqref{main_sys}--\eqref{ini}.  It admits a global classical solution, depending on the relative size of $\nu$ and $\sqrt{c_\pm}$. Specifically --- global classical solution exists if the initial data $(\rho_0, u_0)$ falls into one of the following cases.\newline 
$\bullet$ \ \ Case \textnormal{\#1}. \ \  $\nu \geq 2\sqrt{\cmax}$ \ and \ 
$\displaystyle 
-\rho_0(x)\sqrt{2P_+\lt(\frac{1}{\rho_0(x)}\rt)} < u_0'(x)$;\newline
$\bullet$ \ \ Case \textnormal{\#2}. \ \ $0<\nu < 2\sqrt{\cmax}$ and 
\[
-\rho_0(x)\sqrt{2P_+\lt(\frac{1}{\rho_0(x)}\rt)} < u_0'(x)  < \rho_0(x)\sqrt{2N_-\lt(\frac{1}{\rho_0(x)}\rt)},  \ \ s_+:=\frac{1+e^{\gmax}}{\cmax}, \  \gamma_\pm:= \frac{\pi\nu}{\sqrt{4c_\pm-\nu^2}},
\]
for all $x\in\R$ and one of the following two sub-cases is fulfilled:\newline
\bq\label{ext_dam_11-22}
\left\{ \ \ \begin{split}
\bullet \hspace*{0.5cm} {Case \ \#2.1}:  \quad   & \textnormal{either} \ \ \nu \geq 2\sqrt{\cmin}  \ \ \textnormal{and} \  \ s_+\cmin >1,   \hspace*{5cm}\\
\bullet \hspace*{0.5cm}{Case \ \#2.2}: \quad  &  \textnormal{or} \ \ 0<\nu < 2\sqrt{\cmin}  \ \ \textnormal{and} \  \ e^{\gmin}\lt(s_+\cmin-1\rt) \geq 1.
\end{split}\right.
\eq
\end{theorem}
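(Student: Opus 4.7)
\textbf{Plan for the proof of Theorem \ref{thm:rep:sub}.}

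The starting point is the reduction of global $\calC^1$ regularity along every characteristic $x(t;\alpha)$ to showing that $s(t;\alpha)>0$ uniformly in $\alpha$ and $t$. I would fix one characteristic, treat the coefficient $c(t):=c(t,x(t;\alpha))\in[\cmin,\cmax]$ in the reduced ODE \eqref{reduced_eq} as a merely bounded function of $t$, and establish uniform lower (and, in Case \#2, also upper) bounds on $s(t)$. Once such bounds are in place, the propagation estimate \eqref{est_X} of Theorem \ref{thm_apriori} promotes the local solution to a global one.

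For Case \#1 with $\nu\geq 2\sqrt{\cmax}$, \eqref{dom} yields $\textnormal{Dom}(P_+)=[0,\infty)$, so $\calL_{P_+}(w,s)=w+\sqrt{2P_+(s)}$ is defined on the whole half-plane $\{s\geq0\}$. The hypothesis is precisely $\calL_{P_+}(w_0,s_0)>0$, and the strong comparison principle \eqref{CP:sub} propagates this strict positivity as long as $s(t)>0$. If $s(t_\star)=0$ were ever to occur, then by continuity $w(t_\star)=s'(t_\star)\leq0$; but $\calL_{P_+}>0$ combined with $\sqrt{2P_+(0)}=0$ forces $w(t_\star)>0$, a contradiction. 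Hence $s(t)>0$ for all $t$.

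For Case \#2 with $\nu<2\sqrt{\cmax}$, the domain $\textnormal{Dom}(P_+)=[0,s_+]$ is only a bounded interval, so $s(t)$ must also be trapped from above. Picking $s_*=s_+$ makes $N_-(s_+)=0$, and the hypothesis reads $\calL_{P_+}(w_0,s_0)>0$ together with $\calL_{N_-}(w_0,s_0)<0$. Running both strong comparisons on the maximal interval $[0,T_\star)$ where $s(\cdot)\in\textnormal{Dom}(P_+)\cap\textnormal{Dom}(N_-)$ yields $-\sqrt{2P_+(s(t))}<w(t)<\sqrt{2N_-(s(t))}$ throughout. The Case \#1 argument rules out $s(T_\star)=0$; at the other endpoint $s(T_\star)=s_+$ one would have $w(T_\star)<\sqrt{2N_-(s_+)}=0$, inconsistent with $s$ increasing to $s_+$. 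Thus $T_\star=\infty$ and $s(t)\in(0,s_+)$ forever. For this argument to close one must verify $\textnormal{Dom}(N_-)\supseteq[0,s_+]$. In Case \#2.1 ($\nu\geq2\sqrt{\cmin}$), \eqref{dom} directly gives $\textnormal{Dom}(N_-)=(-\infty,s_+]$, and the extra assumption $s_+\cmin>1$ ensures $\frac{dN_-}{ds}(s_+)=1-\cmin s_+<0$, so that $N_-$ is positive just inside $s_+$ and, by maximality, stays positive down to $s=0$. In Case \#2.2 ($\nu<2\sqrt{\cmin}$), \eqref{dom}--\eqref{str} give $\textnormal{Dom}(N_-)=[s_{**},s_+]$ with $s_{**}=\tfrac{1}{\cmin}-\bigl(s_+-\tfrac{1}{\cmin}\bigr)e^{\gmin}$, and the closing condition $s_{**}\leq0$ rearranges algebraically to $e^{\gmin}(s_+\cmin-1)\geq1$, which is exactly the standing hypothesis.

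The main obstacle I foresee is making the \emph{strong} comparison principle \eqref{CP:sub} quantitatively strict near the boundaries of $\textnormal{Dom}(P_+)$ and $\textnormal{Dom}(N_-)$. Writing $\calL_{P_+}$ as a solution of the scalar linear ODE $\calL'=A(t)\calL+B(t)$, with $B(t)=(\cmax-c)s\geq0$ and $A(t)=(1-\cmax s)/\sqrt{2P_+(s(t))}$, the coefficient $A$ is singular where $\sqrt{2P_+}$ vanishes. Propagation of strict positivity of $\calL_{P_+}$ demands $\int_0^tA(\tau)\,d\tau<\infty$ as $s(\tau)$ approaches $0$; this should follow because $P_+(s)\sim s$ near $s=0$ (since $P_+(0)=0$ and $P_+'(0)=1$), so the change of variables $dt=ds/w$ makes the singularity integrable. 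A parallel but slightly more delicate computation is needed for $\calL_{N_-}$ and for the explicit endpoint formulas in \eqref{dom}--\eqref{str}; this careful bookkeeping of the auxiliary ODEs \eqref{aux_P}--\eqref{aux_N} near their singular endpoints is the technical heart of Section \ref{sec_rep}.
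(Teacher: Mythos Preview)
Your plan is essentially the paper's own: reduce to invariance of $\Sigma^\flat$ for the ODE \eqref{rep_sys} via the strong comparison principles \eqref{CP:sub}, then invoke the a~priori estimate \eqref{est_X}. The paper organizes the contradiction by decomposing $\partial\Sigma^\flat$ into the three pieces \eqref{bdy:sub} rather than by tracking the first exit time of $s$ from $[0,s_+]$, but the two viewpoints are equivalent.

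Your anticipated obstacle is real but slightly misdiagnosed. Near $s=0$ the coefficient $(1-c_+s)/\sqrt{2P_+(s)}$ is \emph{positive}, so it cannot drive the exponential in \eqref{CP:sub:Gr} to zero; the dangerous endpoint for $\calL_{P_+}$ is $s=s_+$ (where the numerator is negative), and for $\calL_{N_-}$ the dangerous endpoint would be $s=s_{**}$. The paper sidesteps any integrability analysis by restricting to a short window $[T_1-\epsilon,T_1]$: if $s(T_1)$ lies on the side of $1/c_\pm$ where the numerator has the favorable sign, the integral is $\geq 0$; otherwise $s(T_1)$ is bounded away from the zero of $P_+$ (respectively $N_-$), so the integrand is bounded. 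Your proposed change of variables $dt=ds/w$ is unnecessary once you adopt this local-in-time trick.
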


\begin{remark}
In fact, the condition \eqref{ext_dam_11-22} is equivalent to the closing condition \eqref{closing}. We arrive at  \eqref{ext_dam_11-22} is by putting $(c_1,c_2)=(c_+,c_-)$ in \eqref{str}. If we assume $c_-=c_+$, the closing condition is satisfied without any extra condition, and we recover the sharp critical threshold result for the damped EP system in \cite{BL20}.
\end{remark}

We display the critical threshold results for the repulsive interaction case in Figure \ref{fig:CT:all}.

\begin{figure}[h]
\hspace{-1.2cm}
\begin{subfigure}[]{0.3\textwidth}
    \includegraphics[scale=.3]{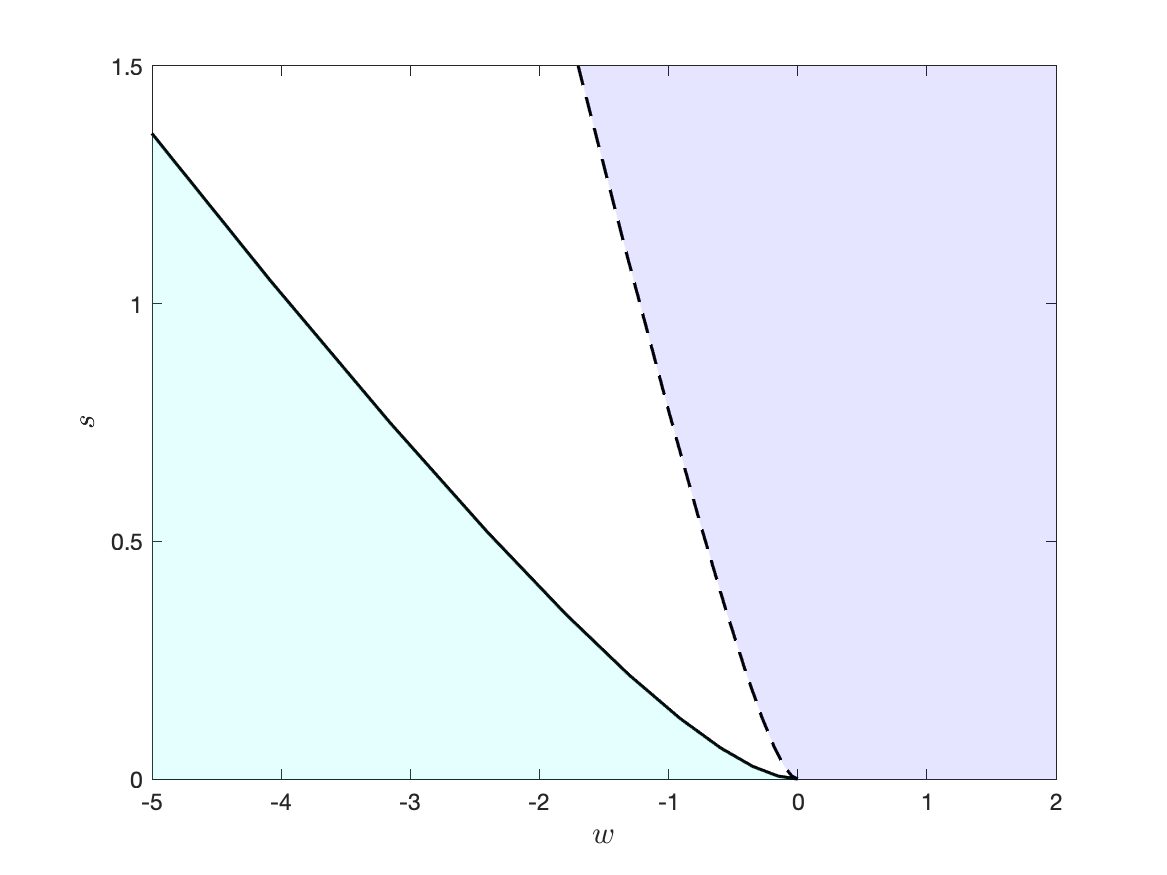}
    \caption{$\nu > 2 \sqrt{ c_{+}}$ case 
}
\end{subfigure}
\hspace{0.3cm}
\begin{subfigure}[]{0.3\textwidth}
    \includegraphics[scale=.3]{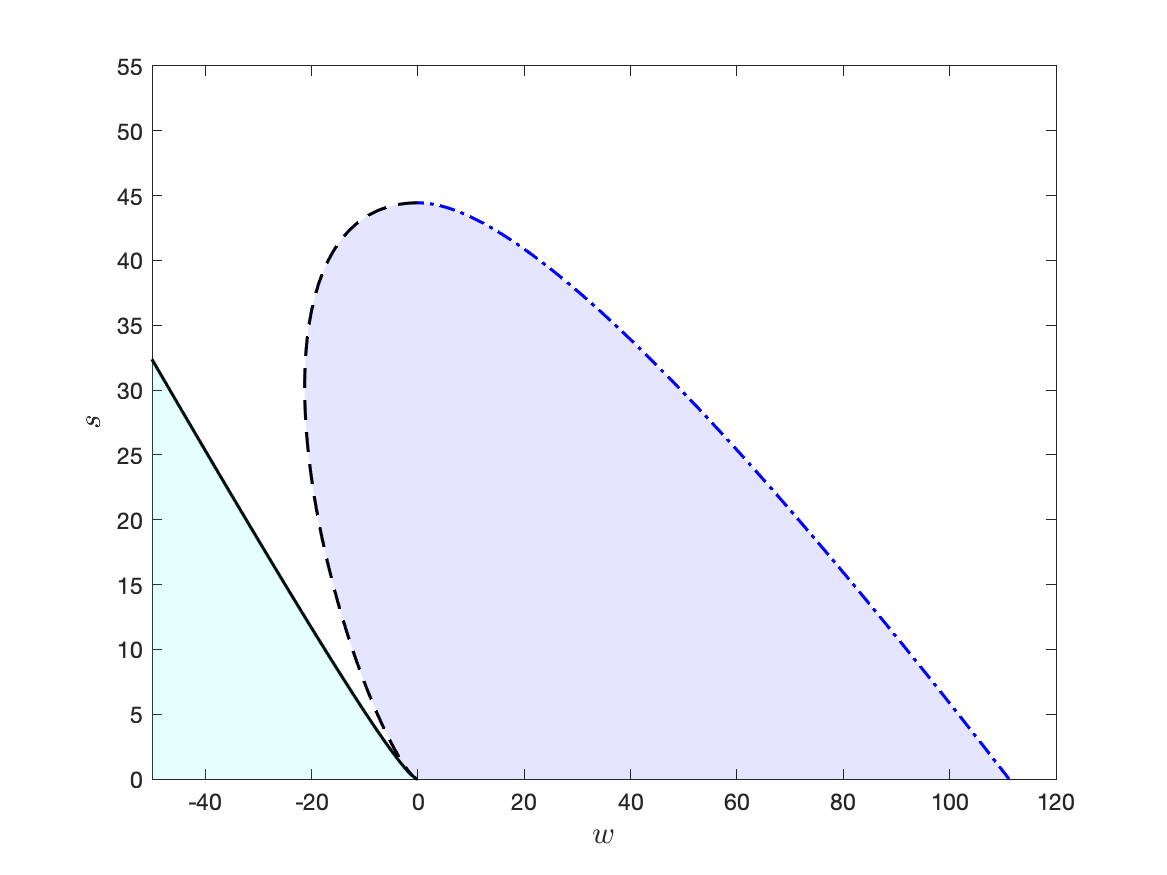}
    \caption{$2 \sqrt{c_{-}} \le \nu < 2 \sqrt{c_{+}}$ case }
\end{subfigure} 
\hspace{0.3cm}
\begin{subfigure}[]{0.3\textwidth}
    \includegraphics[scale=.3]{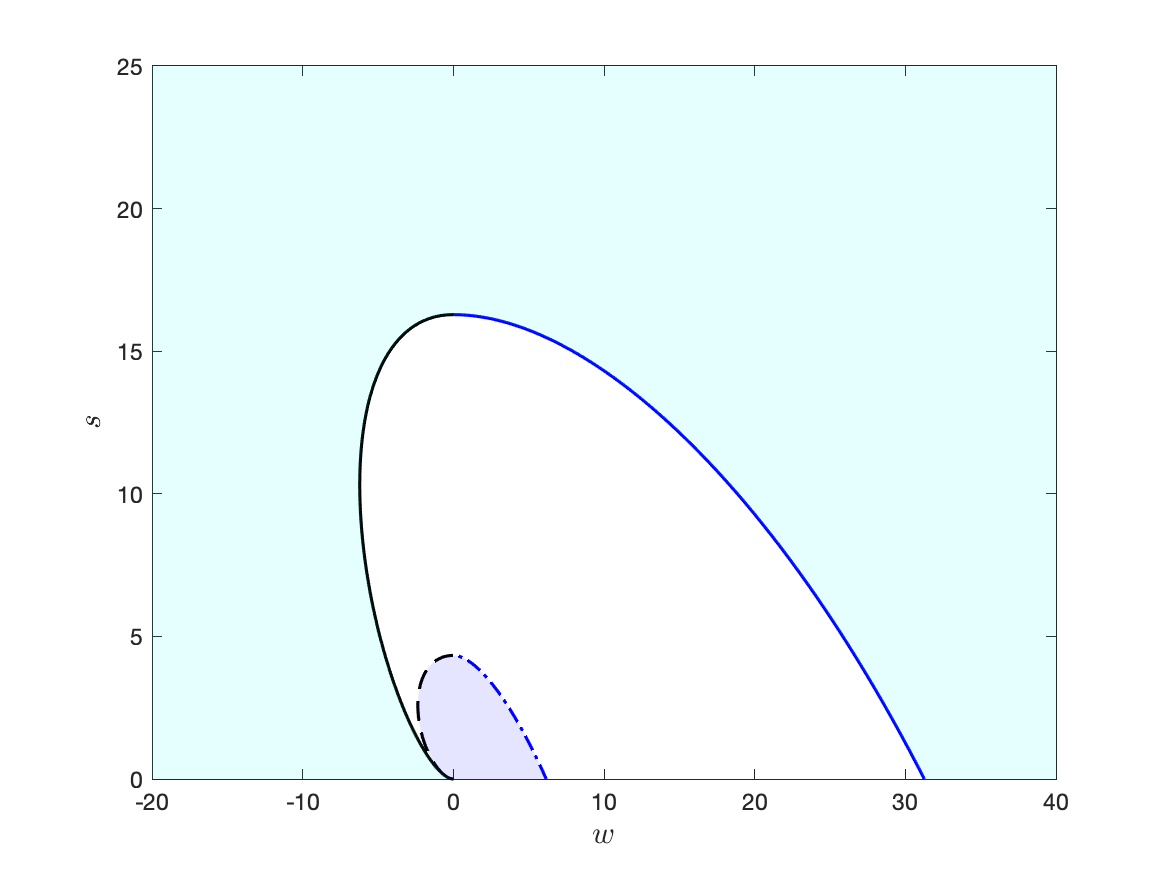}
    \caption{$  0 < \nu < 2 \sqrt{ c_{-}}$ case}
\end{subfigure}
        \caption{Illustration of the \textcolor{violet}{sub}/\textcolor{cyan}{super}-critical regions for the repulsive EP system}
\label{fig:CT:all}
\end{figure}

%
%
%
%
%
%
%

\subsubsection{\textbf{Application to the EP system for cold plasma ion dynamics}}

Our final result is finite-time singularity formation and the global regularity of the damped EP system for cold plasma ion dynamics. Consider a plasma consisting of electrons and ions on $\R$, where both electrons and ions have constant temperature. We assume that the temperatures of electrons and ions equal one and zero, respectively. In this case, our main system is given by
\begin{equation}\label{main_sys_ion}
\begin{cases}
\partial_t \rho + \partial_x(\rho u) = 0, \\
\partial_t u + u \partial_x u =  -\nu u  - \partial_x \phi, \\
- \partial_{xx}\phi = \rho-e^{\phi},
\end{cases}
\end{equation}
where the density of electrons $\rho_e$ is given by {\it Maxwell--Boltzmann relation} \cite{C84}, i.e., $\rho_e =e^{\phi}$, where $\phi = \phi(t,x)$ is the electric potential. 

Note that the system \eqref{main_sys_ion} corresponds to  \eqref{main_sys} with $(k,c) = (1,e^\phi)$.  We consider the system \eqref{main_sys_ion} around a constant state $(1,0)$, i.e.
\begin{equation}\label{bdy}
(\rho, u)(t,x) \to (1,0) \quad \mbox{as }  |x| \to \infty.
\end{equation}
We notice from the above that $\phi(t,x) \to 0$ as $|x| \to \infty$. For the system \eqref{main_sys_ion}--\eqref{bdy}, the local-in-time existence and uniqueness of smooth solutions are studied in \cite{LLS13}.

In order to apply the framework developed in Section \ref{ssec:rep}, we need to secure lower and upper bounds of $e^\phi$. To this end, we introduce the total energy function $H = H(t)$ for \eqref{main_sys}--\eqref{ini} by
\begin{equation}\label{ci:engy}
H=H(t) := \intr  \left( \frac12 \rho u^2  + \frac12 |\pa_x \phi|^2 + U(\phi)\right) \dx,
\end{equation}
where $U(r):= (r-1)e^r + 1$, which is nonnegative for all $r \in \R$. 

Then, as long as there exist classical solutions, one can readily check that the total energy is not increasing in time, i.e., $H(t) \leq H_0:=H(0)$ for all $t \geq 0$, and conserved in time when $\nu = 0$. It was investigated in \cite{BCKpre} that a uniform bound of the potential function, $\phi$, can be obtained by $H_0$. Hence, we can apply the results in the previous section to this case.

\begin{theorem}[{\bf Finite-time breakdown}]
Consider the repulsive EP system \eqref{main_sys_ion}--\eqref{bdy} with $\nu = 0$. For a given initial data $(\rho_0, u_0)$, if there exists $x\in \R$ which does not satisfy
\begin{equation*}
     - \sqrt{2\rho_0(x)-c_-}< u_0'(x) <\sqrt{2\rho_0(x)-c_++\frac{4}{c_-}\lt(\frac{c_+}{c_-}-1\rt){\rho_0(x)}^2 },
\end{equation*}
then the classical solution $(\rho, u)$ will lose $\calC^1$ regularity in a finite time. Here, $ c_{-} := \exp\lt(V^{-1}_-(H_0)\rt)$ and $ c_{+} := \exp\lt(V^{-1}_+(H_0)\rt)$ denote positive constants, where $V^{-1}_\pm$ are inverses of $V_\pm$ defined in \eqref{fctV}. 
\end{theorem}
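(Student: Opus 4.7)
The strategy is to recognize \eqref{main_sys_ion} as an instance of the repulsive EP system \eqref{main_sys} with $k=1$ and the variable background $c = e^{\phi}$, and then apply Theorem \ref{thm:rep:sup} once we secure a priori pointwise bounds $c_{-}\le e^{\phi}\le c_{+}$. The main non-trivial input is the conversion of the abstract super-critical criterion for the undamped case (as displayed in \eqref{CT:rep:sup:ud}) into the explicit pointwise inequalities stated here in terms of $\rho_0$ and $u_0'$.

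First I would use the total energy $H(t)$ defined in \eqref{ci:engy}. Since $\nu=0$, $H$ is conserved along $\calC^1$ solutions, $H(t)=H_0$. Because $U(\phi)=(\phi-1)e^\phi+1\ge 0$ is a convex potential well with unique minimum at $\phi=0$, controlling $\int U(\phi)\,\dx$ and $\int |\pa_x\phi|^2\,\dx$ by $H_0$ yields, via the construction of \cite{BCKpre} encoded in the functions $V_\pm$ of \eqref{fctV}, uniform bounds
\[
V_-^{-1}(H_0)\le \phi(t,x)\le V_+^{-1}(H_0)\quad\forall (t,x).
\]
Exponentiating gives the desired bounds $c_-\le c(t,x)=e^{\phi(t,x)}\le c_+$ with the constants as in the statement. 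This reduces the problem to the variable-background repulsive EP system satisfying exactly the hypothesis of Theorem \ref{thm:rep:sup}.

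Next I would invoke Theorem \ref{thm:rep:sup} in the undamped regime $\nu=0<2\sqrt{c_-}$. The super-critical criterion there is
\[
-\rho_0(x)\sqrt{2P_-\!\lt(\tfrac{1}{\rho_0(x)}\rt)} < u_0'(x) < \rho_0(x)\sqrt{2N_+\!\lt(\tfrac{1}{\rho_0(x)}\rt)}
\]
being \emph{violated} at some $x\in\R$, with $P_-$, $N_+$ the maximal solutions of \eqref{aux_P}, \eqref{aux_N} at $\nu=0$ and with $c_1=c_-$, $c_2=c_+$. In this undamped, constant-coefficient case the ODEs can be integrated explicitly: $2P_-(s)=s(2-c_-s)$ and $2N_+(s)=s(2-c_+s)+\tfrac{4}{c_-}(\tfrac{c_+}{c_-}-1)$, consistent with \eqref{CT:rep:sup:ud}. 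Substituting $s=1/\rho_0(x)$, multiplying through by $\rho_0(x)$, and using
\[
\rho_0\sqrt{\tfrac{1}{\rho_0}\bigl(2-\tfrac{c_\pm}{\rho_0}\bigr)}=\sqrt{2\rho_0-c_\pm},\qquad \rho_0\sqrt{\tfrac{4}{c_-}\bigl(\tfrac{c_+}{c_-}-1\bigr)}=\sqrt{\tfrac{4\rho_0^2}{c_-}\bigl(\tfrac{c_+}{c_-}-1\bigr)},
\]
translates the two-sided inequality into precisely
\[
-\sqrt{2\rho_0(x)-c_-}<u_0'(x)<\sqrt{2\rho_0(x)-c_+ +\tfrac{4}{c_-}\bigl(\tfrac{c_+}{c_-}-1\bigr)\rho_0(x)^2}.
\]
If this fails at some $x$, Theorem \ref{thm:rep:sup} yields finite-time loss of $\calC^1$ regularity.

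The main (and only delicate) obstacle is the energy-to-$L^\infty$ step for $\phi$: Theorem \ref{thm:rep:sup} is consistent only if the global bounds $c_\pm$ are \emph{a priori}, valid on the entire lifespan of the $\calC^1$ solution; otherwise the Lyapunov comparison in \eqref{CP:sup} cannot close. This is where the conservation $H(t)=H_0$ (rather than only the inequality $H(t)\le H_0$ available when $\nu>0$) is essential, and where one must verify that $V_\pm^{-1}(H_0)$ indeed captures the sharp one-sided envelopes of $\phi$; the remaining arithmetic is a direct specialization of \eqref{CT:rep:sup:ud} and the algebraic identity above.
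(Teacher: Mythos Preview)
Your proposal is correct and follows exactly the approach the paper intends: recognize the cold-ion system as an instance of \eqref{main_sys} with background $c=e^\phi$, secure the uniform bounds $c_-\le e^\phi\le c_+$ via the energy $H(t)\le H_0$ and the construction in \cite{BCKpre}, and then specialize Theorem~\ref{thm:rep:sup} at $\nu=0$ using the explicit formula \eqref{CT:rep:sup:ud}. One minor overstatement: you claim that \emph{conservation} $H(t)=H_0$ (rather than the inequality $H(t)\le H_0$) is essential for the $L^\infty$ bound on $\phi$, but in fact the inequality suffices---this is what the paper records and what \cite{BCKpre} actually uses---so the argument would close equally well in the damped case.
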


For the damped Euler--Poisson system for a cold plasma, the global existence and uniqueness of $H^s$ solutions are obtained in \cite{LP19} under the smallness assumptions on the initial data $(\rho_0, u_0)$ near the constant equilibrium state $(1,0)$ in $H^s$ norm. However, the following theorem shows that the global-in-time existence of classical solutions to the damped Euler--Poisson system for a cold plasma can be obtained {\it only} under the smallness assumption on the initial energy $H_0$ compared to the strength of damping.

\begin{theorem}[{\bf Global-in-time regularity}]\label{thm:ion:glo} 
For a class of initial data $(\rho_0,u_0)$,  whose initial energy $H_0$ \eqref{ci:engy} is small enough, we assume that
\bq\label{ci:sub}
\nu \ge C\sqrt{H_0},
\eq
for some absolute constant $C>0$ and 
\bq\label{ion:subcri}
\|\rho_0-1\|_{L^\infty} + \|u_0' \|_{L^\infty} \le \Lambda_0,
\eq
where $\Lambda_0>0$ is a small absolute constant.
Then the damped cold-ion system \eqref{main_sys_ion}--\eqref{bdy} admits global classical solution with initial data $(\rho_0,u_0)$.
\end{theorem}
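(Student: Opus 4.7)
The plan is to recast \eqref{main_sys_ion} in the framework of Theorem \ref{thm:rep:sub} with the variable background $c(t,x) := e^{\phi(t,x)}$, and to show that the smallness assumptions \eqref{ci:sub}--\eqref{ion:subcri} secure both the sub-critical inclusion of the initial data in $\Sigma^\flat$ and the closing condition \eqref{ext_dam_11-22}.

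First, I would record the energy dissipation $H(t) \le H_0$ along classical solutions of \eqref{main_sys_ion}; since $U(r) = (r-1)e^r + 1$ is non-negative and strictly convex with $U(0)=0$, this yields, as in \cite{BCKpre}, the uniform bound $c_- \le e^{\phi(t,x)} \le c_+$ with $c_\pm = \exp(\pm V_\pm^{-1}(H_0))$. Expanding $U(r) = \tfrac12 r^2 + O(r^3)$ near the origin gives $V_\pm^{-1}(H_0) = O(\sqrt{H_0})$, so $c_\pm \to 1$ and $c_+ - c_- = O(\sqrt{H_0})$ as $H_0 \to 0$. Under the coordinate change $(w_0, s_0) = (u_0'/\rho_0,\, 1/\rho_0)$, the smallness in \eqref{ion:subcri} places $(w_0(x), s_0(x))$ in a small neighbourhood of $(0,1)$ uniformly in $x$, while the sub-critical region $\Sigma^\flat$ of \eqref{CT:rep:sub:wd}, regarded as a family of open sets parametrised by $(c_-, c_+)$, retracts as $c_\pm \to 1$ to the constant-background region $\{-\sqrt{s(2-s)} < w < \sqrt{s(2-s)}\}$, which contains $(0,1)$ in its interior. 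For $H_0$ and $\Lambda_0$ small enough the data therefore lies strictly inside $\Sigma^\flat$ uniformly in $x$.

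The crux is the closing condition \eqref{ext_dam_11-22} applied to $c = e^\phi$. Since the constant-background version $e^\gamma(sc - 1) \ge 1$ holds with \emph{equality} in the limit $c_- = c_+$, the variable-background inequality has essentially no slack. A first-order expansion in the small parameters $\nu$ and $c_+ - c_-$ yields, schematically,
\[
e^{\gamma_-}(s_+ c_- - 1) - 1 \;\sim\; (\gamma_+ + \gamma_-) \,-\, 2(c_+ - 1) \,-\, 2(1 - c_-) \,+\, \text{h.o.t.},
\]
so the inequality becomes strict exactly when $\nu \gtrsim c_+ - c_- = O(\sqrt{H_0})$. Tuning the absolute constant $C$ in \eqref{ci:sub} supplies this margin and delivers Case \#2.2 of \eqref{ext_dam_11-22}; the complementary Cases \#1 and \#2.1 follow from the same perturbative computation when $\nu$ is larger relative to $\sqrt{c_\pm}$.

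With the initial data inside $\Sigma^\flat$ and the closing condition verified, Theorem \ref{thm:rep:sub} applies along each characteristic \eqref{eq:chr}, yielding uniform-in-time bounds on $\rho$ and $\partial_x u$. The continuation criterion \eqref{est_X} of Theorem \ref{thm_apriori} then promotes the local $H^s$ solution (supplied by \cite{LLS13}, or equivalently by Theorem \ref{thm_apriori} applied with $c = e^\phi$ satisfying the regularity \eqref{cspace} thanks to the Poisson equation and the $H^s$ bounds on $\rho$) to a global classical solution. As anticipated, the main obstacle is the closing-condition step: because it is only marginal in the constant-background limit, the quantitative balance between the damping strength $\nu$ and the oscillation amplitude $c_+ - c_- \sim \sqrt{H_0}$ has to be tracked explicitly, and this is exactly what the lower bound $\nu \ge C\sqrt{H_0}$ in \eqref{ci:sub} enforces.
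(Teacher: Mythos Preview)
Your proposal is correct and follows essentially the same route as the paper: reduce to Theorem \ref{thm:rep:sub} with $c=e^\phi$, use the energy bound from \cite{BCKpre} to get $c_+/c_- - 1 = O(\sqrt{H_0})$, and then verify the closing condition \eqref{ext_dam_11-22} by a small-parameter expansion balancing $\gamma_\pm \sim \nu$ against $c_+ - c_- \sim \sqrt{H_0}$. Two minor slips to clean up: the formula $c_\pm = \exp(\pm V_\pm^{-1}(H_0))$ has a spurious sign (the paper's $V_-^{-1}$ is already negative, so $c_\pm = \exp(V_\pm^{-1}(H_0))$); and the limiting region you wrote, $\{-\sqrt{s(2-s)} < w < \sqrt{s(2-s)}\}$, is the \emph{undamped} $\nu=0$ shape, not the damped one---though your conclusion that $(0,1)$ is interior holds regardless, so the argument survives.
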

\begin{remark}
We note that \eqref{ion:subcri} restricts $(\rho_0,u_0)$ to fall into the sub-critical regions in Theorem \ref{thm:rep:sub}. We can take $\Lambda_0=\frac{1}{2}$, for instance. Thus, we are only concerned with the smallness of $H_0$ and the requirement on the strength of damping \eqref{ci:sub}.
\end{remark}

%
%
%
%
%

\subsection{Outline of the paper} This paper is organized as follows.  In Section \ref{sec_neu}, we establish the local-in-time well-posedness of the EP system without imposing the neutrality condition. We then prove the non-existence result of Theorem \ref{illposed}. Moreover, we study the well-posedness of the EP system equipped with the neutrality condition and construct the anomalous solutions in Theorem \ref{anomal}. In Section \ref{sec_att}, we present our Lyapunov-based approach to establish the critical thresholds in the attractive case. In Section \ref{sec_rep}, we discuss the repulsive interaction case, completing the proofs of Theorems \ref{thm:rep:sup} and \ref{thm:rep:sub}. As a consequence, we prove Theorem \ref{thm:ion:glo}, which concerns the global regularity of damped cold plasma ion dynamics.

%
%
%
%
%

\section{Local well-posedness and the neutrality condition}
\label{sec_neu}

In this section, we present a rigorous mathematical treatment of the local well-posedness and the neutrality condition of the EP system introduced in Section \ref{ssec:lwp}. We begin by proving the local well-posedness theory, which is \emph{independent} of the neutrality condition.

\subsection{Local well-posedness and Non-existence} 

\begin{proof}
[Proof of Theorem \ref{thm_apriori}]
We only provide a priori estimates of solutions in the desired regularity spaces. The local well-posedness can be obtained by the standard arguments developed for the types of conservation laws; see \cite{BL20, BL20_2, M03} for instances. Indeed, we obtain
\begin{align}\label{est:lwp}
&\frac{d}{dt}\lt( \|\rho - c\|_{H^s \cap \dot H^{-1}} + \|u\|_{H^{s+1}}\rt) \cr
& \quad \leq C\lt(\|\pa_x u\|_{L^\infty} + \|\rho\|_{L^\infty} + \|c\|_{L^\infty} +\|\pa_x c\|_{H^{s}}+1 \rt) \lt(\|\rho - c\|_{H^s \cap \dot H^{-1}} + \|u\|_{H^{s+1}} \rt)\cr
&\qquad  +\|\pa_t c\|_{H^s \cap \dot H^{-1}},
\end{align}
where $C>0$ is an absolute constant, see Appendix \ref{app:lwp} for the details of \eqref{est:lwp}. We apply the Gr\"onwall's lemma to obtain \eqref{est_X}, thereby concluding the proof.
\end{proof}

In fact, the proofs of Theorems \ref{illposed} and \ref{anomal} are the main themes in this section. They restrict our attention to the case of a constant background state. For the sake of simplicity in presenting our results, we only focus on the undamped case. The damped case can be treated in a similar way. Our analysis relies on the characteristic method and, especially, the explicit formulation of Jacobian of the characteristic flow. We briefly explain the main tools and the strategy.

For a $\calC^1$ solution $(\rho,u)$ to EP system \eqref{main_sys}--\eqref{ini} with $c =\bar{c}$, we consider the characteristic flow $x(t,\alpha)$ defined in \eqref{eq:chr}. Write 
\begin{equation*}
\Gamma(t,\alpha):=\partial_{\alpha}x(t,\alpha),
\end{equation*}
signifying the Jacobian of the map $x(t,\cdot):\R\to\R$. We note that $\Gamma(0,\alpha)=1$ and $x(t,\cdot)$ is a diffeomorphism as long as $\Gamma(t,\cdot)>0$. In particular, we have
\bq\label{chr:infty}
\lim_{\alpha \rightarrow \pm \infty} x(t,\alpha) = \pm \infty.
\eq
Taking $\partial_{\alpha}$ to \eqref{eq:chr}, we obtain
\begin{equation}\label{rhopart}
    \partial_{x}u(t,x(t,\alpha)) = \frac{\partial_t \Gamma(t,\alpha)}{\Gamma(t,\alpha)}.
\end{equation}
In view of the mass conservation $\rho' = -\rho \partial_x u$, we find that
\begin{equation}\label{paupart}
\rho(t,x(t,\alpha))=\frac{\rho_0(\alpha)}{\Gamma(t,\alpha)}.
\end{equation}
Finally, we recall the explicit formulas of  $\Gamma(t,\alpha)$ for the undamped ($\nu=0$) case \cite{ELT01}.  They will lay the foundation of the proofs throughout this section.

$\bullet$ Repulsive case:
\begin{equation}\label{gam_1}
\Gamma(t,\alpha)
= 1+ \lt(\frac{\rho_0(\alpha)}{\bar c}-1\rt)(1-\cos(\sqrt{\bar{c}} t))+u_0'(\alpha)\frac{\sin(\sqrt{\bar{c}} t)}{\sqrt{\bar{c}}},
\end{equation}

$\bullet$ Attractive case: 
\begin{equation}\label{gam_2}
\Gamma(t,\alpha)= 1+ \lt(\frac{\rho_0(\alpha)}{\bar c}-1\rt)(1-\cosh(\sqrt{\bar{c}} t))+u_0'(\alpha)\frac{\sinh(\sqrt{\bar{c}} t)}{\sqrt{\bar{c}}}.
\end{equation}

We now prove Theorem \ref{illposed} based on a contradiction argument. Assuming the existence of a solution satisfying \eqref{cls:hs}, subject to ``non-neutral" initial data, i.e., 
\[
\intr (\rho_0(x) - \bar c) \,\dx \neq 0,
\]
the non-neutral effect prevents $u$ from decaying at the far field that $u(t,\cdot) \notin H^{s+1}$ as soon as $t>0$. 
\begin{proof}[Proof of Theorem \ref{illposed}]
We set initial data $(\rho_0,u_0)$ by assigning $u_0=0$ and $\rho_0=\bar c + g(x)$ where
$g(x)\in L^1\cap H^s(\R)$ and
\[
\intr g(x) \, \dx \neq 0.
\]
We stress that $g(x) \notin \dot{H}^{-1}(\R)$; otherwise, it would violate the \emph{non-vanishing integral} condition above. Indeed, there are infinitely many such functions; for instance, we choose $g(x)=(1 + x^2)^{-1}$ in this context.

Suppose, by way of deriving a contradiction, that there exists a solution to EP system \eqref{main_sys}--\eqref{ini} satisfying
\bq\label{cls:hs}
(\rho -\bar c , u ) \in L^{\infty}\lt([0,\delta];  H^s(\R) \times H^{s+1}(\R)\rt)
\eq
for some $\delta>0$. We note that this, in particular, requires $u$ to decay at the far field,
\begin{equation}\label{u_limit}
    \lim_{\alpha \rightarrow \pm \infty}u(t,\alpha) = 0, \ \ \ \forall t \in [0,\delta].
\end{equation}
To investigate the solution behavior at infinity, we observe that $\|\pa_x u\|_{L^{\infty}}$ is uniformly bounded in $t\in[0,\delta]$. Hence, we can employ the forward characteristic $x(t,\cdot)$ defined in \eqref{eq:chr}. For each $t \in [0,\delta]$ and $R>0$, we infer from \eqref{rhopart} that
\begin{align*}
\begin{aligned}
u(t,x(t,R))-u(t,x(t,-R))&=\int_{x(t,-R)}^{x(t,R)}(\partial_xu)(t,x)\,\dx\\&=\int_{R}^{R}(\partial_x u)(t,x(t,\alpha)) \dx(t,\alpha)\\
    &=\int_{-R}^{R} \partial_t\Gamma(t,\alpha)\,\textnormal{d}\alpha.
\end{aligned}
\end{align*}
Hence, an application of the explicit formula  \eqref{gam_1}, for the case of repulsive forcing, supplies that
\[
    u(t,x(t,R))-u(t,x(t,-R)) = \int_{-R}^{R}g(\alpha)\,\textnormal{d}\alpha\cdot \frac{\sin(\sqrt{\bar{c}}t)}{\sqrt{\bar{c}}}.
\]
Then, by taking $R \rightarrow \infty$, we find that, for all small $t>0$, 
\[
\lim_{R \rightarrow +\infty} \lt(u(t,x(t,R)) - u(t,x(t,-R))\rt) \neq 0,
\]
yielding a contradiction to \eqref{u_limit} by means of \eqref{chr:infty}. The attractive forcing case can be tackled in the same manner by recalling \eqref{gam_2}. The conclusion of the theorem is now immediate.
\end{proof}

\subsection{Neutrality condition and Anomalous solution}

We next discuss the local well-posedness encompassing the neutrality condition. We prove that the propagation of neutrality is secured in the regularity space recorded in Theorem \ref{thm_neu_apri}.

\begin{proof}[Proof of Theorem \ref{thm_neu_apri}]
    
We estimate the evolution of $\|(\rho - c)(t)\|_{L^1}$ and $\|(\pa_x u)(t)\|_{L^1}$. Straightforward computations give
\begin{align*}
\frac{d}{dt}\|\rho -c\|_{L^1} &\leq \|\pa_x ((\rho - c)u)\|_{L^1} + \|\pa_t c\|_{L^1} + \|\pa_x (cu)\|_{L^1}\cr
&\leq \|\rho - c\|_{H^1}\|u\|_{H^1} + \|\pa_t c\|_{L^1} + \|c\|_{L^\infty}\|\pa_x u\|_{L^1} + \|\pa_x c\|_{L^2}\|u\|_{L^2}\cr
&\leq |\|\{\rho(t,\cdot),u(t,\cdot)\}|\|_s^2 + \|\pa_t c\|_{L^1} + \|c\|_{L^\infty}\|\pa_x u\|_{L^1} + \|\pa_x c\|_{L^2}^2 +  |\|\{\rho(t,\cdot),u(t,\cdot)\}|\|_s^2
\end{align*}
and
\[
\frac{d}{dt}\|\pa_x u\|_{L^1} \leq \|\pa_x u\|_{L^2}^2 + \|u\|_{L^2}\|\pa_{x}^2 u\|_{L^2} + \|\rho - c\|_{L^1} \leq 2 |\|\{\rho(t,\cdot),u(t,\cdot)\}|\|_s^2 + \|\rho - c\|_{L^1}.
\]
By adding these two inequalities, we obtain
\begin{align*}
&\frac{d}{dt}\lt(\|\rho -c\|_{L^1} + \|\pa_x u\|_{L^1}\rt) \cr
&\quad \leq (1 + \|c\|_{L^\infty})\lt(\|\rho -c\|_{L^1} + \|\pa_x u\|_{L^1}\rt) + 4|\|\{\rho(t,\cdot),u(t,\cdot)\}|\|_s^2 + \|\pa_t c\|_{L^1} + \|\pa_x c\|_{L^2}^2. 
\end{align*}
Applying the Gr\"onwall's lemma to the above yields the desired result \eqref{est_l1}, and the theorem follows.
\end{proof}

For the case of constant background, we identify the necessary and sufficient condition of the neutrality condition. It plays a central role in constructing the \emph{anomalous} solutions. 

\begin{lemma}[{\bf Neutrality condition: constant background}] \label{intgr} Consider the EP system \eqref{main_sys} with $c = \bar{c} > 0$. 
Let $(\rho,u)$ be a $\calC^1$ solution of \eqref{main_sys}--\eqref{ini} in $[0,T) \times \R$ with neutrality satisfying initial data, i.e,
\begin{equation}\label{ini:neu}
    \rho_0-\bar c \in L^1(\R),\text{ with } \int_{\R} (\rho_0(x)-\bar c)\, \dx=0.
\end{equation}
Then, the propagation of neutrality condition \eqref{def_neu} holds if and only if
\begin{equation*}
    u_0 \in  BV(\R),\text{ with } \int_{\R} u'_0(x) \,\dx=0.
\end{equation*}    
\end{lemma}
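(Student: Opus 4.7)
The plan is to move to Lagrangian coordinates via the characteristic map $x(t,\alpha)$ of \eqref{eq:chr} and exploit the explicit formulas \eqref{gam_1}--\eqref{gam_2} for its Jacobian $\Gamma(t,\alpha)$. Since $(\rho,u)$ is of class $\calC^1$, \eqref{rhopart} gives
$$
\Gamma(t,\alpha) = \exp\!\lt(\int_0^t \pa_x u(s, x(s,\alpha))\,\ds\rt) > 0,
$$
so $x(t,\cdot)$ is a global $\calC^1$-diffeomorphism of $\R$ for every $t\in[0,T)$, and the change of variables $\dx = \Gamma(t,\alpha)\,d\alpha$ is admissible on all of $\R$.

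The cornerstone of the argument is a clean algebraic cancellation: substituting \eqref{gam_1} or \eqref{gam_2} into $\rho_0(\alpha) - \bar c\,\Gamma(t,\alpha)$, the $(1-\mathfrak{c})$-contributions drop out and one is left with the pointwise identity
$$
\rho_0(\alpha) - \bar c\,\Gamma(t,\alpha) = (\rho_0(\alpha) - \bar c)\,\mathfrak{c}(\sqrt{\bar c}\,t) - \sqrt{\bar c}\, u_0'(\alpha)\,\mathfrak{s}(\sqrt{\bar c}\,t),
$$
where $(\mathfrak{c},\mathfrak{s}) = (\cos,\sin)$ in the repulsive case and $(\cosh,\sinh)$ in the attractive case. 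Using \eqref{paupart} in the form $\rho(t,x(t,\alpha)) - \bar c = [\rho_0(\alpha) - \bar c\,\Gamma(t,\alpha)]/\Gamma(t,\alpha)$ and integrating against $\Gamma\,d\alpha$ upgrades this to the twin identities
$$
\int_\R |\rho(t,x) - \bar c|\,\dx = \int_\R \bigl|(\rho_0(\alpha) - \bar c)\,\mathfrak{c}(\sqrt{\bar c}\,t) - \sqrt{\bar c}\, u_0'(\alpha)\,\mathfrak{s}(\sqrt{\bar c}\,t)\bigr|\,d\alpha,
$$
$$
\int_\R (\rho(t,x) - \bar c)\,\dx = \mathfrak{c}(\sqrt{\bar c}\,t)\!\int_\R (\rho_0 - \bar c)\,d\alpha \,-\, \sqrt{\bar c}\,\mathfrak{s}(\sqrt{\bar c}\,t)\!\int_\R u_0'(\alpha)\,d\alpha,
$$
which recast both parts of the neutrality condition \eqref{def_neu} entirely in terms of the initial data.

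Both implications then fall out at once. For the \emph{if} direction, when $u_0 \in BV(\R)$ with $\int u_0' = 0$, the first identity shows $\rho(t,\cdot) - \bar c \in L^1(\R)$ and the second, combined with \eqref{ini:neu}, shows its integral vanishes for every $t \in [0,T)$. For the \emph{only if} direction, I would fix $t_0 \in (0,T)$ with $\mathfrak{s}(\sqrt{\bar c}\,t_0) \neq 0$ (any $t_0 > 0$ works in the attractive case; in the repulsive case choose $t_0 < \pi/\sqrt{\bar c}$): then $\rho(t_0,\cdot) - \bar c \in L^1$ together with $\rho_0 - \bar c \in L^1$ force, via the first identity, $u_0' \in L^1(\R)$, hence $u_0 \in BV(\R)$; the second identity then reduces to $0 = -\sqrt{\bar c}\,\mathfrak{s}(\sqrt{\bar c}\,t)\int_\R u_0'$ for all $t$, whence $\int u_0' = 0$.

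The only point requiring genuine care is the isolated zeros of $\sin(\sqrt{\bar c}\,t)$ in the repulsive regime, which is why one must pick $t_0$ small; since $T>0$, an admissible $t_0$ always exists and this is not a real obstruction. The damped case $\nu>0$ is treated identically once the damped analogues of \eqref{gam_1}--\eqref{gam_2} replace the undamped formulas.
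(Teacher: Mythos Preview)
Your argument is correct and follows essentially the same route as the paper: both pass to Lagrangian coordinates, use \eqref{paupart} to rewrite $\int|\rho-\bar c|\,\dx$ as $\int|\rho_0-\bar c\Gamma|\,d\alpha$, and then read off the equivalence from the explicit formulas \eqref{gam_1}--\eqref{gam_2}. The paper routes this through the intermediate statement ``$\rho(t,\cdot)-\bar c\in L^1\Leftrightarrow \Gamma(t,\cdot)-1\in L^1$'' before invoking the explicit $\Gamma$, whereas you collapse the two steps into the single identity $\rho_0-\bar c\,\Gamma=(\rho_0-\bar c)\,\mathfrak{c}-\sqrt{\bar c}\,u_0'\,\mathfrak{s}$, but this is only a packaging difference.
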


\begin{proof}[Proof of Lemma \ref{intgr}]
Let $x(t,\alpha)$ be the forward characteristic flow satisfying \eqref{eq:chr}. Note that $\Gamma(t,\cdot)>0$ is well-defined for each $t \in [0,T)$. We claim that
\[
\rho(t,\cdot)-\bar c\in L^1(\R) \quad \mbox{if and only if} \quad \Gamma(t,\cdot) -1 \in L^1(\R). 
\]
In this context, we apply \eqref{paupart} to see that 
$$\begin{aligned}
\int_{x(t,-R)}^{x(t,R)} |\rho(t,x)-\bar c|\,\dx &= \int_{-R}^{R} |\rho(t,x(t,\alpha))-\bar c |\,\textnormal{d}x(t,\alpha) \cr
&= \int_{-R}^{R} \lt|\frac{\rho_0(\alpha)}{\Gamma(t,\alpha)} -\bar c \rt|\Gamma(t,\alpha)\,\textnormal{d}\alpha \\
 &= \int_{-R}^{R} |(\rho_0(\alpha)-\bar c) -\bar c(\Gamma(t,\alpha)-1)|\,\textnormal{d}\alpha. 
\end{aligned}$$
On applying the triangle's inequality with   \eqref{chr:infty}, the claim follows. Hence, we deduce from the explicit formulas of $\Gamma(t,\cdot)-1$ recorded in   \eqref{gam_1} and \eqref{gam_2} that $u_0' \in L^1(\R)$ if and only if $\Gamma(t,\cdot)-1 \in L^1(\R)$, which is equivalent to $\rho(t,\cdot)-\bar c \in L^1(\R)$.

We now assume that $u'_0 \in L^1(\R)$. Similarly, in view of \eqref{paupart}, one sees that
\[
\intr (\rho(t,x)-\bar c)\,\textnormal{d}x  = \intr (\rho_0(\alpha)-\bar c) - \bar c (\Gamma(t,\alpha)-1)\,\textnormal{d}\alpha  = -\bar c\intr (\Gamma(t,\alpha)-1) \,\textnormal{d}\alpha.
\]
By inserting \eqref{gam_1}, \eqref{gam_2} into $\Gamma(t,\cdot)-1$  and integrating over $\R$, we discern that 
\[
\intr (\rho(t,x)-\bar c)\,\textnormal{d}x = 0\quad \mbox{ if and only if } \quad \int_{\R} u_0'(\alpha)\,\textnormal{d}\alpha =0.
\]
This confirms the conclusion of the lemma.
\end{proof}

\begin{remark}
It is worth noting that if $u_0 \notin BV(\R)$, we infer that the local $\calC^1$ solution instantaneously fails the neutrality condition by taking $T>0$ arbitrarily small. Hence, this lemma leads us to find \emph{anomalous} solutions in Theorem \ref{anomal}.
\end{remark}

\begin{proof}[Proof of Theorem \ref{anomal}]
We fix $\rho_0 =\bar c>0$. It satisfies \eqref{ini:neu}, evidently. In view of Theorem \ref{thm_apriori} and Lemma \ref{intgr}, we aim to choose $u_0$ satisfying 
\[
u_0 \in H^{s+1}\setminus BV (\R).
\]
Consider
\[
u_0(x) := \frac{\sin x}{(1+x^2)^{\frac{3}{8}}}.
\] 
Since $u_0 \in H^{s+1}(\R)$, we obtain a local smooth solution $(\rho,u)$ via Theorem \ref{thm_apriori}. In particular, we find by means of Morrey's embedding that this solution is of class $\calC^1$. However, upon noting
\[
u_0'(x) = \frac{\cos x}{(1+x^2)^{\frac{3}{8}}} -\frac{3}{4}\frac{x\sin x}{(1+x^2)^{\frac{11}{8}}} \notin L^1(\R),
\]
we find that $(\rho,u)$ fails the neutrality condition  instantaneously by Lemma \ref{intgr}. This completes the proof.
\end{proof}

We, therefore, conclude from Theorems \ref{thm_neu_apri} and \ref{anomal} that it is necessary to assume additional regularity $u_0 \in BV(\R)$ to secure the propagation of the neutrality condition.


 %
 %
 %
 %
 %
 %
\section{Attractive forcing and the restricted  borderline threshold}\label{sec_att}

In this section, we focus on the critical threshold phenomenon of the attractive EP system \eqref{main_sys}--\eqref{ini}. To apply the method of phase plane analysis introduced in Section \ref{ssec:rep}, we consider the following  ODE system:
\begin{equation}\label{reduced_eq_att}
\lt\{\begin{split}
w' &= -\nu w - (1 - c(t)s), \\
s' &= w,
\end{split}\rt.
\end{equation}
subject to $(w(0),s(0))=(w_0,s_0)$, where $c(t)$ is smooth function satisfying $0<c_- \le c(t) \le c_+$. 

\subsection{Constant background case}

To give a motivation for the Lyapunov function, we initiate the discussion for the constant background case --- $c(t)=\bar c$. In this case, \eqref{reduced_eq_att} can be rewritten in the matrix form as,
\bq\label{sys:att:cst}
\left( \begin{array}{c}
w  \\
\displaystyle s  - \frac1{\bar c}  
\end{array} \right)' = 
\left( \begin{array}{cc}
-\nu & {\bar c} \\
1 & 0
\end{array} \right) 
\left( \begin{array}{c}
w  \\
\displaystyle s - \frac1{\bar c}  
\end{array} \right).
\eq
The dynamics of solution trajectories are determined by the eigenvalues of its coefficient matrix, whose eigenvalues are given by
\[
\lambda_{s} = \frac{-\nu - \sqrt{\nu^2 + 4\bar c}}{2} <0 \quad\mbox{and}\quad \lambda_{u} = \frac{-\nu + \sqrt{\nu^2 + 4\bar c}}{2}>0,
\]
with its corresponding eigenvectors $X_{s} = \displaystyle \left( \begin{array}{c} \lambda_{s}  \\
1 \end{array} \right) $ and
$X_{u} = \displaystyle \left( \begin{array}{c} \lambda_{u}  \\
1 \end{array} \right) $.
In particular, the equilibrium point $(0,1/\bar c)$ of \eqref{sys:att:cst} is unstable; the solution curves are attracted to $(0,1/\bar c)$ along $X_s$ direction while repulsed from $(0,1/\bar c)$ along $X_u$ direction.

This motivates us to define the following Lyapunov functions,
\[
\calL_{s}(w,s):= w - \lambda_{s}\lt(s-\frac{1}{\bar c}\rt) \quad \mbox{and}\quad \calL_{u}(w,s):=  w - \lambda_{u}\lt(s-\frac{1}{\bar c}\rt). 
\]
In particular, the solution $(w(t),s(t))$ of \eqref{sys:att:cst} satisfies 
\[
\frac{d}{dt}\calL_{s}(w(t),s(t)) = \lambda_{u}\calL_{s}(w(t),s(t)) \quad \mbox{and}\quad \frac{d}{dt}\calL_{u}(w(t),s(t)) = \lambda_{s}\calL_{u}(w(t),s(t)),
\]
obtaining
\bq\label{att:exp}
\calL_{s}(w(t),s(t)) = \calL_{s}(w_0,s_0)e^{\lambda_ut} \ \  \mbox{and} \ \ \calL_{u}(w(t),s(t)) = \calL_{u}(w_0,s_0)e^{\lambda_st}. 
\eq

Note that $\calL_{s}(w_0,s_0)\ge0$ implies $\calL_{s}(w(t),s(t))\ge0$ for all $t\ge0$. This, combined with $s'=w$ and that $w$-intercept of the line $\calL_{s}(w,s)=0$ is positive, yields that $\{\calL_{s}(w,s)\ge0, s>0\}$ is \emph{invariant}. On the other hand, if $\calL_{s}(w_0,s_0)<0$, then \eqref{att:exp} implies that
\[
\calL_{s}(w(t),s(t))\to -\infty, \ \ \calL_{u}(w(t),s(t)) \to 0 \ \  \mbox{as}  \ \ t \to \infty.
\]
By noticing
\[
\calL_s(w,s)-\calL_u(w,s) = \lt(-\lambda_s+\lambda_u\rt)\lt(s-\frac{1}{\bar c}\rt),
\]
we conclude that $s(t)$ attains zero in a finite time. This argument recovers the sharp critical threshold for the attractive EP with constant background \cite[Theorem 3.2]{ELT01}, \cite[Theorem 2.5]{BL20_2}.

\sout{
From geometric point of view, $\calL_{u}(w,s)$ (resp. $\calL_{s}(w,s)$) gives $X_s$ (resp. $X_u$) coordinate. This can be understood as follows; for $xy$ plane, the equation of $x$-axis is $y=0$ and this $y$ gives the $y$-coordinate.
By a change of coordinate, we obtain
\[
w = -\frac{\lambda_u\calL_s(w,s) - \lambda_s\calL_u(w,s)}{\lambda_s -\lambda_u},\ \ \ \  s = \frac{1}{\bar c} - \frac{\calL_s(w,s) - \calL_u(w,s)}{\lambda_s -\lambda_u} \ \ \ \forall (w,s) \in \R^2.
\]
\[
w(t) = \frac{\lambda_ue^{\lambda_u t}\calL_s(w_0,s_0) - \lambda_se^{\lambda_s t}\calL_u(w_0,s_0)}{\sqrt{\nu^2+4\bar c}},\ \ \ \  s(t) = \frac{1}{\bar c} + \frac{e^{\lambda_u t}\calL_s(w_0,s_0) - e^{\lambda_s t}\calL_u(w_0,s_0)}{\sqrt{\nu^2+4\bar c}} 
\]
}

\subsection{Variable background case}
To study the critical threshold for the variable background case, we consider the following Lyapunov functions:
\[
\calL_{s}^\pm(w,s):= w - \lambda_{s}^\pm \lt(s-\frac{1}{c_\pm}\rt) \quad \mbox{and}\quad \calL_{u}^-(w,s):= w - \lambda_{u}^- \lt(s-\frac{1}{c_-}\rt),
\]
where 
\[
\lambda_{s}^\pm = \frac{-\nu - \sqrt{\nu^2 + 4 c_\pm}}{2} <0 \quad\mbox{and}\quad \lambda_{u}^\pm = \frac{-\nu + \sqrt{\nu^2 + 4 c_\pm}}{2}>0.
\]
We readily check that
\[
\frac{d}{dt}\calL_s^-(w(t),s(t))
\ge \lambda_u^-\calL_s^-(w(t),s(t)),\quad s(t)\ge0.
\]
Similarly, this implies that $\lt\{\calL_s^-(w,s)\ge 0, s>0\rt\}$ is a sub-critical region.
On the other hand, we have 
\[
\frac{d}{dt}\calL_s^+(w(t),s(t)) \le \lambda_u^+\calL_s^+(w(t),s(t)), \ \ \ \frac{d}{dt}\calL_u^-(w(t),s(t))
\ge \lambda_s^-\calL_u^-(w(t),s(t)), \ \ \ s(t)\ge 0.
\]
By Gr\"onwall's lemma, we observe that
\[
\begin{split}
\lt(-\lambda_s^+ + \lambda_u^-\rt)s(t) &= \calL_s^+(w(t),s(t)) - \calL_u^-(w(t),s(t)) + \frac{1}{\lambda_u^+} - \frac{1}{\lambda_s^-} \le \calL_s^+(w_0,s_0)e^{\lambda_u^+ t} + \calO(1) 
\end{split}
\]
as long as $s(t)$ is non-negative; in particular, if $\calL_s^+(w_0,s_0)<0$, then $s(\cdot)$ attains zero in a finite time. Hence, we obtain the sub/super-critical thresholds for attractive EP systems with variable backgrounds. They coincide with \cite[Theorems 2.2 and 2.3]{BL20_2}. 

\begin{figure}[h!]
\includegraphics[scale=.35]{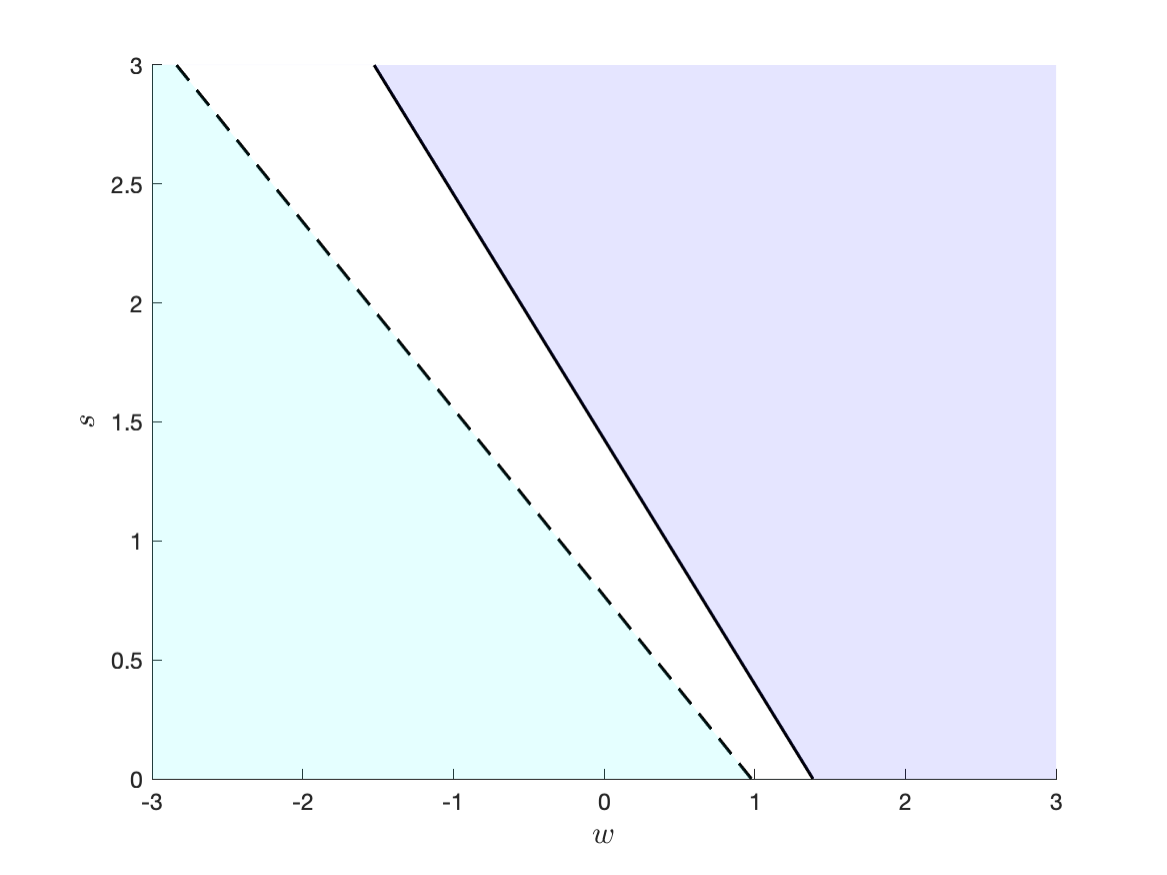}
\caption{Illustration of the \textcolor{violet}{sub-critical} $\{\calL_{s}^-(w,s) \geq 0 \}$ and \textcolor{cyan}{super-critical} $\{\calL_{s}^+(w,s) < 0 \}$ criteria for the attractive case}
\end{figure}

 %
 %
 %
 %
 %
 %
\section{Repulsive forcing and threshold with variable background}\label{sec_rep}

In this section, we aim to establish the critical thresholds for the repulsive EP system \eqref{main_sys}--\eqref{ini}. To this end, we conduct the phase plane analysis as introduced in Section \ref{ssec:rep}; for repulsive interaction, we recall the transformed ODE system \eqref{reduced_eq}:
\begin{equation}\label{rep_sys}
\lt\{\begin{split}
w' &= -\nu w + 1 - c(t)s \\
s' &= w,
\end{split}\rt.
\end{equation}
subject to initial point $(w_0,s_0)$. Here we assume that $0<c_- \le c(t) \le c_+$.

We first confirm the comparison principles. Note that
\bq\label{CT-fun}
\begin{split}
\frac{d}{dt}\calL_{P}(w(t),s(t)) &= -\nu w(t) + 1 -c(t)s(t) +\lt(\frac{\nu \sqrt{2P(s(t))} + 1 - c_1 s(t)}{\sqrt{2P(s(t))}}\rt)w(t) \\
&= \lt(\frac{1-c_1s(t)}{\sqrt{2P(s(t))}} \rt)\calL_{P}(w(t),s(t)) + (c_1-c(t))s(t).
\end{split}
\eq
Similarly, we observe that
\bq\label{CT-fun2}
\frac{d}{dt}\calL_{N}(w(t),s(t)) = \lt(\frac{c_2 s(t)-1}{\sqrt{2N(s(t))}} \rt)\calL_{N}(w(t),s(t)) + (c_2-c(t))s(t).
\eq
In particular, putting $(c_1,c_2)=(c_-,c_+)$ yields 
\[
\begin{split}
\calL_{P_-}(w(t),s(t)) \le \calL_{P_-}(w_0,s_0) \exp\lt(\int_0^t \frac{1-c_-s(\tau)}{\sqrt{2P_-(s(\tau))}} \,\dta \rt), \cr
\calL_{N_+}(w(t),s(t)) \ge \calL_{N_+}(w_0,s_0) \exp\lt(\int_0^t \frac{c_+s(\tau)-1}{\sqrt{2N_+(s(\tau))}} \,\dta \rt),
\end{split}
\]
provided that $s(\tau)$ satisfies $P_-(s(\tau))\ge 0$ and (resp. $N_+(s(\tau))\ge 0$) for all $\tau \in [0,t]$, and the
the weak comparison principle \eqref{CP:sup} follows. Similarly, by taking $(c_1,c_2)=(c_+,c_-)$, we obtain the strong comparison principle \eqref{CP:sub}.

The Lyapunov-based approach explained in Section \ref{ssec:rep} ---  essentially for the \emph{closing} condition \eqref{closing} --- makes use of the domains of $\sqrt{2P}$ and $\sqrt{2N}$. By witnessing the defining equation of $P$ and $N$ in \eqref{aux_P}, \eqref{aux_N}, we observe that the domains of $\sqrt{2P(\cdot)}$, $\sqrt{2N(\cdot)}$ correspond to the set of $s\in \R$ for which $P(s)\ge 0$, $N(s)\ge 0$, respectively. It implies that the endpoints of the domains are precisely determined by zeros of $P$ and $N$. The domains \eqref{dom} can be obtained by the explicit solutions for $\eqref{rep_sys}$.

We first investigate the exact formula of $\tilde{s}$ and $s_{**}$ in \eqref{str}. We take index $i \in \{1,2\}$. For $0\le\nu <2\sqrt{c_i}$ case, the solution $(w(t),s(t))$ of \eqref{rep_sys} with $c(t)= c_i>0$, $(w_0,s_0) = (0,a)$ is explicitly given as:
\begin{align}\label{ws:for}
\begin{aligned}
w(t) &= -\frac{1}{\mu_{i}}(\bar c a-1)\sin(\mu_{i} t )e^{-\frac{\nu t}{2}},\quad \mu_{i} = \sqrt{ c_i - \frac{\nu^2}{4}}, \cr
s(t) &= \frac{1}{\bar c}+\lt(a-\frac{1}{\bar c}\rt) \lt(\cos \mu_{i} t + \frac{\nu}{2\mu_{i}}\sin \mu_{i} t \rt)e^{-\frac{\nu t}{2}}.
\end{aligned}
\end{align}
To unveil the formula of $\tilde{s}$ \eqref{str}, we take $a=\tilde{s}$ and index $i=1$ so that 
\[
\calL_{P}(w_0,s_0) = 0 + \sqrt{2P(\tilde{s})} = 0,
\]
where $P$ satisfies \eqref{aux_P}. 
By \eqref{CT-fun}, we observe that
\[
\calL_{P}(w(t),s(t)) = w(t) + \sqrt{2P(s(t))} = 0, \quad t \in \lt[0,\frac{\pi}{\mu_{1}}\rt].
\]
Note that the range of $t$ can be obtained by the criteria $P(s(t))\ge0$ and \eqref{ws:for}.
In particular, we deduce that
\[
s\lt(\frac{\pi}{\mu_{i}}\rt) = 0 \ \ \Leftrightarrow \ \ 
\tilde{s} = s_0 = \frac{1+e^{\gamma_{1}}}{c_1}.
\]
Similarly, by taking $a=s_*$ and $i=2$, we obtain
\[
 \sqrt{2N(s(t))} = w(t) \ge 0 \quad \mbox{for} \quad t \in \lt[-\frac{\pi}{\mu_{2}},0\rt].
\]
By noting $s_0= s_*$, we infer that
\[
s_{**} = s\lt(-\frac{\pi}{\mu_{2}}\rt) =  \frac{1}{c_2} - \lt(s_*-\frac{1}{c_2}\rt)e^{\gamma_2}.
\]
This confirms \eqref{str}. 

\subsection{Finite-time breakdown}

In this subsection, we prove that finite breakdown occurs for the super-critical initial data. To this end, we present the following lemma, which is crucially utilized in obtaining an upper bound of the maximal time for which a local smooth solution persists.
\begin{lemma}\label{lem:time}
For the solution $(w(t),s(t))$ of \eqref{rep_sys}, the following identity holds:
\[
t_2-t_1 \le \int_{s(t_2)}^{s(t_1)}\frac{\ds}{\sqrt{2P_-(s)}},
\]
provided that 
\[
w_0 + \sqrt{2P_-(s_0)} \le 0
\]
and $s(\tau)\in\textnormal{Dom}(P_-)$ for all $\tau \in [t_1,t_2]$. Indeed, we have equality in the integral if $c(t)=c_-$ and $w_0 + \sqrt{2P_-(s_0)}=0$. 
\end{lemma}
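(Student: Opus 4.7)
The plan is to convert the pointwise bound $w(t) \le -\sqrt{2P_-(s(t))}$ provided by the weak comparison principle into a scalar ODE inequality for $s(\cdot)$, and then separate variables. First, the hypothesis $w_0 + \sqrt{2P_-(s_0)} \le 0$ says $\calL_{P_-}(w_0,s_0) \le 0$, and combined with $s(\tau) \in \textnormal{Dom}(P_-)$ on the relevant time interval, the weak comparison principle \eqref{CP:sup} yields $\calL_{P_-}(w(t),s(t)) \le 0$, equivalently
\[
w(t) \le -\sqrt{2P_-(s(t))} \le 0 \qquad \forall t \in [t_1,t_2].
\]

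Next, using $s'(t) = w(t)$ from \eqref{rep_sys}, this reads $s'(t) \le -\sqrt{2P_-(s(t))}$, so $s$ is non-increasing on $[t_1,t_2]$; on any sub-interval where $P_-(s) > 0$ one may rearrange as $-s'(\tau)/\sqrt{2P_-(s(\tau))} \ge 1$. Integrating on $[t_1,t_2]$ and applying the change of variables $\sigma = s(\tau)$ (valid by the strict monotonicity of $s$ wherever $P_- > 0$) gives
\[
t_2 - t_1 \le \int_{t_1}^{t_2} \frac{-s'(\tau)}{\sqrt{2P_-(s(\tau))}}\,\dta = \int_{s(t_2)}^{s(t_1)} \frac{\ds}{\sqrt{2P_-(s)}},
\]
as claimed. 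The potential pause of $s(\cdot)$ at an endpoint of $\textnormal{Dom}(P_-)$ where $P_-$ vanishes is handled by decomposing $[t_1,t_2]$ into maximal sub-intervals on which $P_-(s) > 0$; the improper integral converges at such endpoints because the ODE \eqref{aux_P} forces a square-root-type zero of $P_-$ there.

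For the equality statement, when $c(t) \equiv c_-$ identity \eqref{CT-fun} with $c_1 = c_-$ degenerates to the homogeneous linear ODE $\frac{d}{dt}\calL_{P_-}(w(t),s(t)) = \bigl((1-c_-s(t))/\sqrt{2P_-(s(t))}\bigr)\,\calL_{P_-}(w(t),s(t))$; hence $\calL_{P_-}(w_0,s_0)=0$ forces $\calL_{P_-}(w(t),s(t)) \equiv 0$, so $w(t) = -\sqrt{2P_-(s(t))}$ exactly and the chain of inequalities above collapses to equalities. The only delicate point in the whole argument is the integrability of $1/\sqrt{2P_-(s)}$ at zeros of $P_-$, which is the reason one must invoke the explicit structure of $P_-$ built into \eqref{aux_P}--\eqref{dom}.
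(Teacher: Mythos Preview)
Your proof is correct and follows essentially the same route as the paper: use the weak comparison principle to get $w(t)\le -\sqrt{2P_-(s(t))}$, then change variables $s=s(t)$ (equivalently, separate the scalar inequality $s'\le -\sqrt{2P_-(s)}$) to bound $t_2-t_1$ by the integral, and invoke \eqref{CT-fun} for the equality case. Your treatment is in fact more careful than the paper's three-line argument, which performs the substitution directly without discussing integrability at the zeros of $P_-$.
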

\begin{proof}
By change of coordinates $s=s(t)$, we have
\[
\int_{s(t_2)}^{s(t_1)} \,\frac{\ds}{\sqrt{2P(s)}} =  \int_{t_2}^{t_1} \frac{w(t)}{\sqrt{2P(s(t))}}\, \dt \ge \int_{t_2}^{t_1} (-1)\, \dt =t_2-t_1,
\]
where the inequality comes from the weak comparison principle \eqref{CP:sup}. Under the latter assumption, we note that \eqref{CT-fun} turns the inequality into equality. This completes the proof.
\end{proof}
Several remarks are in order. Suppose that $P$ satisfies \eqref{aux_P} with $c_1=\bar c$.

\begin{remark}\label{rmk:int:ud}
For $0\le\nu <2\sqrt{\bar c}$ case, we obtain an explict integral formula:
\[
\int_{\textnormal{Dom}(P)} \,\frac{\ds}{\sqrt{2P(s)}} = \frac{\pi}{\mu_{\bar c}}, \ \ \textnormal{with} \ \ \mu_{\bar c}:=\sqrt{\bar c-\frac{1}{4}\nu^2}.
\]
The initial condition in \eqref{aux_P} can be switched to $P(a)=0$ for any $a\neq 1/{\bar c}$. In particular, for $\nu=0$ case, we obtain the so-called \textit{elliptic integral}:
\[
\int_{\frac{2}{\bar c}-a}^{a}\frac{\ds}{ \sqrt{(a-s)(\bar c(a+s)-2)}} = \frac{\pi}{\sqrt{\bar c}}.
\]
\end{remark}
\begin{remark}\label{rmk:int:od}
For $\nu \ge 2\sqrt{\bar c}$ case, we have
\[
\int_{0}^{s_0} \,\frac{\textnormal{d}s}{\sqrt{2P(s)}} \le \frac{\log{(\lambda s_0)} \vee 2}{\lambda}, \ \  \textnormal{with} \ \ \lambda:=\frac{\nu -\sqrt{\nu^2-4\bar c}}{2}>0,
\]
for any $s_0 > 0$. In particular, it implies that $\textnormal{Dom}(P)=[0,+\infty)$.
\end{remark}

\begin{proposition}\label{rep_blo_1}
Let $\nu\ge2\sqrt{c_{-}}$. For any $(w_0,s_0)\in\Sigma^{\sharp}=\{(w,s) : w\le -\sqrt{2P_-(s)}, \ s>0\}$, the solution to \eqref{rep_sys} with initial data $(w_0,s_0)$ satisfies $s(t_*)=0$ for some finite $t_*>0$.
\end{proposition}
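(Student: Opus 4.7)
The plan is to combine the weak comparison principle \eqref{CP:sup} with the time estimate of Lemma \ref{lem:time} and the integrability bound in Remark \ref{rmk:int:od}. The case $\nu \geq 2\sqrt{c_-}$ is favorable because, by \eqref{dom}, $\textnormal{Dom}(P_-)=[0,\infty)$, so the comparison principle may be applied on the entire half-line $s\ge 0$ without worrying about $s(t)$ exiting the domain of $\sqrt{2P_-}$ from the right.

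First, I would fix $(w_0,s_0)\in\Sigma^\sharp$, so $\calL_{P_-}(w_0,s_0)=w_0+\sqrt{2P_-(s_0)}\le 0$. Define
\[
T^\star := \sup\{t\ge 0 : s(\tau)\ge 0 \text{ for all } \tau\in[0,t]\}\in (0,+\infty],
\]
and work on $[0,T^\star)$. On this interval the weak comparison principle \eqref{CP:sup} applies, giving $w(t)\le -\sqrt{2P_-(s(t))}\le 0$. In particular $s'(t)=w(t)\le 0$, so $s$ is non-increasing on $[0,T^\star)$, which in turn keeps $s(\tau)$ inside $\textnormal{Dom}(P_-)=[0,\infty)$ and makes the use of \eqref{CP:sup} self-consistent.

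Next, I would invoke Lemma \ref{lem:time} with $t_1=0$ and $t_2=t\in[0,T^\star)$: since $\calL_{P_-}(w_0,s_0)\le 0$ and $s(\tau)\in\textnormal{Dom}(P_-)$ throughout,
\[
t \;\le\; \int_{s(t)}^{s_0}\frac{\ds}{\sqrt{2P_-(s)}} \;\le\; \int_0^{s_0}\frac{\ds}{\sqrt{2P_-(s)}}.
\]
By Remark \ref{rmk:int:od} applied with $\bar c = c_-$ (so $\lambda = (\nu-\sqrt{\nu^2-4c_-})/2>0$), the right-hand integral is bounded by $(\log(\lambda s_0)\vee 2)/\lambda<\infty$. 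This uniform upper bound on $t$ forces $T^\star<\infty$: otherwise, letting $t\uparrow\infty$ along $[0,T^\star)$ produces a contradiction. Consequently, there must exist a finite $t_\star \le T^\star$ at which $s(t_\star)=0$, which is precisely the claim.

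The main obstacle is conceptual rather than computational: one has to verify that the hypotheses of Lemma \ref{lem:time} really persist up to the blow-up time, i.e., $s(t)$ does not escape $\textnormal{Dom}(P_-)$ before reaching $0$. In the overdamped regime $\nu\ge 2\sqrt{c_-}$ this is automatic from $\textnormal{Dom}(P_-)=[0,\infty)$ combined with monotonicity of $s$, so no extra argument is needed; this contrasts with the underdamped regime $0\le \nu<2\sqrt{c_-}$ treated separately in Proposition \ref{rep_blo_2}, where $\textnormal{Dom}(P_-)$ is only $[0,\tilde s]$ and the closing condition is required to prevent $s$ from exiting through the right endpoint.
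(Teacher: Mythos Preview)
Your proof is correct and follows essentially the same approach as the paper: both combine the weak comparison principle \eqref{CP:sup}, the time-integral bound of Lemma \ref{lem:time}, and the finiteness from Remark \ref{rmk:int:od} to derive a contradiction with the assumption that $s$ remains positive past the time $\displaystyle T_0=\int_0^{s_0}\frac{\ds}{\sqrt{2P_-(s)}}$. Your version is slightly more explicit about the self-consistency of invoking \eqref{CP:sup} via $\textnormal{Dom}(P_-)=[0,\infty)$ and the monotonicity of $s$, but the argument is the same.
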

\begin{proof} 
We claim that $s(t_*)=0$ with $t_*\le T_0$ where
\[
T_0:= \int_{0}^{s_0}\frac{\ds}{\sqrt{2P_-(s)}}.
\]
Suppose not, i.e., there exists $(w_0,s_0)\in\Sigma^{\sharp}$ such that $s(t)>0$ for all $0\le t \le t_*$ satisfying
$ t_* > T_0$.
Then, by weak comparison principle \eqref{CP:sup}, we have
\[
w(t) \le -\sqrt{2P_-(s(t))}, \ \ 0\le t \le t_*.
\]
Hence, by Lemma \ref{lem:time}, we note in passing
\[
t_* \le \int_{s(t_*)}^{s_0}\frac{\ds}{\sqrt{2P_-(s)}} \le T_0 < t_*,
\] 
which leads to a contradiction. Hence, the conclusion of the proposition follows by recalling that $T_0<+\infty$ via Remark \ref{rmk:int:od}.
\end{proof}

\begin{proposition}\label{rep_blo_2}
Suppose that $0\le \nu<2\sqrt{c_{-}}$. For any 
\[
(w_0,s_0)\in\Sigma^\sharp=\{(w,s): s>0\}\setminus\lt\{(w,s) :  -\sqrt{2P_-(s)}<w<\sqrt{2N_+(s)},s>0\rt\}, 
\]
the solution to \eqref{rep_sys} with initial data $(w_0,s_0)$ satisfies $s(t_*)=0$ for some finite $t_*>0$.
\end{proposition}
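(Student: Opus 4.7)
My plan is to reduce the proposition to Proposition~\ref{rep_blo_1} by showing that every trajectory issued from $(w_0,s_0)\in\Sigma^\sharp$ reaches, in finite time $t_2$, a point $(w(t_2),s(t_2))$ with $\calL_{P_-}(w(t_2),s(t_2))\le 0$ and $s(t_2)\in(0,s_*]$. From such a point the weak comparison principle~\eqref{CP:sup} propagates $\calL_{P_-}\le 0$, which forces $w\le-\sqrt{2P_-(s)}\le 0$, so $s$ is non-increasing and stays in $\textnormal{Dom}(P_-)=[0,s_*]$; Lemma~\ref{lem:time} together with the finite integral bound $\int_0^{s_*}\ds/\sqrt{2P_-(s)}\le \pi/\mu_-$ from Remark~\ref{rmk:int:ud} (with $\mu_-:=\sqrt{c_--\nu^2/4}$) then yields $s(t_*)=0$ at some $t_*\le t_2+\pi/\mu_-$.

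I partition $\Sigma^\sharp$ into (A) $\{w\le-\sqrt{2P_-(s)},\ s\in(0,s_*]\}$; (B) $\{w\ge\sqrt{2N_+(s)},\ s\in(0,s_*]\}$; and (C) $\{s>s_*\}$. Region (A) gives $t_2=0$ directly via Proposition~\ref{rep_blo_1}. For (B), the closing condition $s_{**}\le 0$ (verified in the text preceding Theorem~\ref{thm:rep:sup}) gives $[0,s_*]\subseteq\textnormal{Dom}(N_+)$, so the weak comparison principle yields $w(t)\ge\sqrt{2N_+(s(t))}\ge 0$ while $s(t)\in[0,s_*]$. The resulting differential inequality $s'\ge\sqrt{2N_+(s)}$, combined with the analogue of Remark~\ref{rmk:int:ud} for $N_+$ (a finite bound $\pi/\mu_+$), produces a finite $t_1$ at which $s(t_1)=s_*$ and $w(t_1)\ge 0$. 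If $w(t_1)=0$, then $\calL_{P_-}(0,s_*)=0$ and I take $t_2=t_1$; otherwise $w(t_1)>0$ and the trajectory enters $\{s>s_*\}$, joining region (C) with $w>0$. For (C) with $w_0<0$ the argument is easy: $s'=w<0$ immediately, and the pointwise bound $w'\le -\nu w-e^{\gamma_-}$ on $\{s\ge s_*\}$ (using $c_-s\ge c_-s_*=1+e^{\gamma_-}$) keeps $w$ negative, so $s$ hits $s_*$ in finite time $t_2$ with $w(t_2)<0$ and $\calL_{P_-}(w(t_2),s_*)=w(t_2)<0$, placing the trajectory in region~(A).

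The decisive and hardest step is the remaining scenario, when the trajectory enters $\{s>s_*\}$ with $w>0$: I must control its excursion under a \emph{variable} background $c(t)\in[c_-,c_+]$, and no single autonomous Lyapunov function of $(w,s)$ is monotone in both the $w\ge 0$ and $w\le 0$ sub-phases. My fix is a two-energy device with $\hat E_\pm(t)=\tfrac12 w(t)^2+\tfrac12 c_\pm s(t)^2-s(t)$; a direct calculation yields $\hat E_\pm'=-\nu w^2+w(c_\pm-c(t))s$, so $\hat E_-$ is non-increasing while $w\ge 0$ and $s\ge 0$, and $\hat E_+$ is non-increasing while $w\le 0$ and $s\ge 0$. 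Using $\hat E_-$ first bounds $s_{\max}$ attained when $w$ vanishes at some finite $t_m$ (finiteness follows from $w'\le -e^{\gamma_-}$ in $\{w\ge 0,s\ge s_*\}$); the same pointwise bound on $w'$ forces $w$ strictly negative just after $t_m$; then $\hat E_+$ controls the trajectory for $t>t_m$, and comparison of $w$ with the solution of $\tilde w'=-\nu\tilde w-e^{\gamma_-}$ (or with the linear descent $\tilde w(t)=-e^{\gamma_-}(t-t_m)$ when $\nu=0$) keeps $w$ bounded away from $0$ from above, so $s'=w<0$ drives $s$ back to $s_*$ in finite time $t_2$ with $w(t_2)<0$. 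This reduces to region~(A) and completes the proof.
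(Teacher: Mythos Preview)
Your argument is correct, but the decomposition and the tools you use in the region $\{s>s_*\}$ differ from the paper's. The paper splits $\Sigma^\sharp$ into $\Sigma^\sharp_1=\{w\le-\sqrt{2P_-(s)},\,0<s\le s_*\}\cup\{w\le 0,\,s>s_*\}$, $\Sigma^\sharp_2=\{w>0,\,s\ge s_*\}$, and $\Sigma^\sharp_3=\{w\ge\sqrt{2N_+(s)},\,0<s<s_*\}$. Two ideas there are worth contrasting with yours. First, for $\{w\le 0,\,s>s_*\}$ the paper does \emph{not} drive $s$ back to $s_*$ and then invoke region~(A); instead it introduces a shifted Lyapunov $P_*$ solving the same ODE as $P_-$ but with $P_*(s_0)=0$, so that $\calL_{P_*}(w_0,s_0)=w_0\le 0$ and the weak comparison plus Lemma~\ref{lem:time} send $s$ directly to $0$ in time $\le\pi/\mu_-$ (Remark~\ref{rmk:int:ud} applies because $P_*$ is just a $c_-$-trajectory with a different starting point). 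Second, for $\{w>0,\,s\ge s_*\}$ the paper uses the angular Lyapunov $\theta_-(w,s)=\arctan\bigl(\sqrt{c_-}(s-1/c_-)/w\bigr)$ and shows $\theta_-'\ge\sqrt{c_-}$, which gives a \emph{uniform} bound $t_2\le \pi/(2\sqrt{c_-})$ for reaching $w=0$, independent of the entry point.

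Your route---the pointwise bound $w'\le -\nu w-e^{\gamma_-}$ on $\{s\ge s_*\}$, the energies $\hat E_\pm$, and the ODE comparison with $\tilde w$---is more elementary and avoids introducing $P_*$ and $\theta_-$, but it pays a price: the time you obtain to cross $\{s>s_*\}$ depends on $(w_0,s_0)$ (through $w_0/e^{\gamma_-}$ and $s_{\max}$), so you do not recover the paper's uniform estimate $t_*\le \pi/\mu_-+\pi/(2\sqrt{c_-})+\pi/\mu_+$. For the proposition as stated this is irrelevant. Two minor remarks: the invocation of $\hat E_+$ is not actually needed, since the comparison $w\le\tilde w$ already keeps $w$ negative and drives $\int w\,\dt\to-\infty$; and the phrase ``bounded away from $0$ from above'' is slightly imprecise near $t_m$ (where $\tilde w(t_m)=0$), though the integrated statement you need is correct.
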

\begin{proof} 
We decompose $\Sigma^\sharp$ into three subsets:
\[
\Sigma^{\sharp}_{1}:=\lt\{w\le -\sqrt{2P_-(s)},\ 0<s\le s_* \rt\} \cup \{w\le0, \ s>s_*\},
\]
\[
\Sigma^\sharp_{2}:=\lt\{w>0, s \ge s_*\rt\}, \quad \mbox{and} \quad \Sigma^\sharp_{3}:=\lt\{ w \ge \sqrt{2N_+(s)}, 0<s<s_*\rt\}.
\]

$\bullet$ 
For $(w_0,s_0)\in \Sigma^{\sharp}_{1}$, we put $a:= \max\{s_0,s_*\}.$ Consider $P_*(\cdot)$ satisfying
\[
\frac{dP_*}{ds}= \nu\sqrt{2P_*(s)} + 1 - c _-s, \ P_*(a)=0.
\]

Then, we have
\[
\calL_{P_*}(w_0,s_0)=w_0+\sqrt{2P_*(s_0)} \le 0.
\]

Indeed, for $s_0 > s_*$, we have $w_0+\sqrt{2P_*(s_0)}= w_0 + \sqrt{2P_*(a)}=w_0 \le 0$, and for $0<s_0\le s_*$, we notice that $w_0+\sqrt{2P_*(s_0)} = w_0+\sqrt{2P_-(s_0)} \le 0$. The weak comparison principle \eqref{CP:sup} also holds for $P$ as well. Hence, following the argument in the proof of Proposition \ref{rep_blo_1} leads us to find that  $s(t_1)=0$ with 
\[
t_1 \le \int_0^a \frac{\ds}{\sqrt{2P(s)}}\le \frac{\pi}{\mu_-}, \ \ \mu_-:=\sqrt{c_- - \frac{\nu^2}{4}},
\]
where the last inequality follows from Remark \ref{rmk:int:ud}.

$\bullet$  For $(w_0,s_0)\in \Sigma^\sharp_2$, we prove that $w(t_2)=0$, $s(t_2)\ge s_*$ with $t_2 \le \frac{\pi/2}{\sqrt{c_-}}$. To this end, we temporarily introduce the following Lyapunov function, defined by
\[
\theta_{-}(w,s):= \arctan\lt(\frac{\sqrt{c_-}\lt(s-\frac{1}{c_-}\rt)}{w}\rt).
\] 
We claim that
\[
\frac{d}{dt}\theta_{-}((w(t),s(t))) \ge \sqrt{c_{-}} 
\]
as long as $s(t)\ge \frac{1}{c_{-}}$ and $w(t)>0$. It can be shown by the following straightforward computation:
\[
\begin{split}
\frac{d}{dt}\frac{\theta_{-}(w,s)}{\sqrt{c_-}}=&\frac{1}{1+c_-\lt(\frac{s-\frac{1}{c_-}}{w}\rt)^2}\lt(\frac{w^2-\lt(s-\frac{1}{c_-}\rt)\lt(-\nu w + 1 -c(t)s\rt)}{w^2}\rt) \\
 =&\frac{1}{1+c_-\lt(\frac{s-\frac{1}{c_-}}{w}\rt)^2}\lt(1+c_-\lt(\frac{s-\frac{1}{c_-}}{w}\rt)^2+\nu\lt(\frac{s-\frac{1}{c_-}}{w}\rt)+\frac{\lt(s-\frac{1}{c_-}\rt)(c(t)-c_-)s}{w^2}\rt)\ge 1.
\end{split}
\]
Hence, we find that $\theta_{-}(w(t_2),s(t_2))=\frac{\pi}{2}$ with 
\[
t_2\le \frac{\pi/2-\theta_{-}(w_0,s_0)}{\sqrt{c_{-}}} \le  \frac{\pi/2}{\sqrt{c_{-}}}.
\] 
In particular, we obtain $w(t_2) = 0$.

$\bullet$  For $(w_0,s_0) \in \Sigma^\sharp_3$, we argue as in the proof of Proposition \ref{rep_blo_1} that $s(t_3) = s_*$, $w(t_3)\ge 0$ with 
\[
t_3 \le \int_{s_0}^{s_*} \frac{\ds}{\sqrt{2N_+(s)}} \le \frac{\pi}{\mu_+}, \ \ \mu_+:=\sqrt{c_+ - \frac{\nu^2}{4}}.
\]
Henceforth, we deduce that, for any $(w_0,s_0)\in \Sigma^\sharp$,  $s(t_*)=0$ with 
\[
t_* \le \frac{\pi}{\mu_{-}}+\frac{\pi/2}{\sqrt{c_{-}}} + \frac{\pi}{\mu_{+}}.
\]
This delivers the finite upper bound of $t_*$  and completes the proof of the proposition.
\end{proof}

\begin{proof}[Proof of Theorem \ref{thm:rep:sup}]
The proof is based on a combination of the weak comparison principle \eqref{CP:sup} with Propositions \ref{rep_blo_1} and \ref{rep_blo_2}. For any regular initial data $(\rho_0,u_0)$ in the sense of Theorem \ref{thm_apriori}, if there exists $x\in\R$ such that
\[
\lt(\frac{u_0'(x)}{\rho_0(x)}, \frac{1}{\rho_0(x)}\rt) \in \Sigma^\sharp,
\]
then $\rho(\cdot,t)$ blows up in a finite time, and the conclusion of the theorem follows.
\end{proof}

\subsection{Global-in-time regularity} \label{ssec:rep:sub}

To establish the global regularity of the initial data in sub-critical regime, we prove the strong comparison principle \eqref{CP:sub} in the sequel. Recall from \eqref{CT-fun}, \eqref{CT-fun2} that if  $s(\tau)$ satisfies $P_+(s(\tau))\ge 0$ and (resp. $N_-(s(\tau))\ge 0$) for all $\tau \in [0,t]$, then we get
\bq\label{CP:sub:Gr}
\begin{split}
\calL_{P_+}(w(t),s(t)) \ge \calL_{P_+}(w_0,s_0) \exp\lt(\int_0^t \frac{1-c_+s(\tau)}{\sqrt{2P_+(s(\tau))}} \,\dta \rt), \cr
\calL_{N_-}(w(t),s(t)) \le \calL_{N_-}(w_0,s_0) \exp\lt(\int_0^t \frac{c_-s(\tau)-1}{\sqrt{2N_-(s(\tau))}} \,\dta \rt).
\end{split}
\eq
The proof for the strict inequalities in the strong comparison principle \eqref{CP:sub} requires us to control the integral in the exponential term in \eqref{CP:sub:Gr}. We demonstrate this in detail in the following propositions.

\begin{proposition}\label{rep_glo_1}
Let $\nu \geq 2\sqrt{c_{+}}$. Then, 
\[
\Sigma^\flat=\lt\{(w,s) : -\sqrt{2P_+(s)}<w, s>0\rt\}
\] 
is an invariant set for the system \eqref{rep_sys}.
\end{proposition}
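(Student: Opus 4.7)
The plan is to show the solution cannot exit $\Sigma^\flat$ in finite time, by combining the strong comparison principle \eqref{CP:sub:Gr} with an analysis of the two components of $\partial\Sigma^\flat$. Define
\[
T^* := \sup\{t > 0 : (w(\tau),s(\tau)) \in \Sigma^\flat \ \text{for all} \ \tau \in [0,t]\}.
\]
Since $(w_0,s_0)$ lies in the open set $\Sigma^\flat$, $T^* > 0$. The RHS of \eqref{rep_sys} being Lipschitz in $(w,s)$ with $c(t)$ bounded, the solution exists globally, so invariance reduces to showing $T^* = \infty$. Suppose, for contradiction, that $T^* < \infty$. The assumption $\nu \ge 2\sqrt{c_+}$ gives $\textnormal{Dom}(P_+) = [0,\infty)$ with $P_+(s) > 0$ for $s > 0$ and $P_+(0) = 0$, so
\[
\partial \Sigma^\flat = \lt\{w = -\sqrt{2P_+(s)},\ s > 0\rt\} \cup \lt\{s = 0,\ w \ge 0\rt\},
\]
and $(w(T^*),s(T^*))$ must lie in one of these two pieces. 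On $[0,T^*)$ we have $s(\tau) > 0 \in \textnormal{Dom}(P_+)$, so the strong comparison principle \eqref{CP:sub:Gr} applies:
\[
\calL_{P_+}(w(t),s(t)) \ge \calL_{P_+}(w_0,s_0) \exp\lt(\int_0^t \frac{1-c_+ s(\tau)}{\sqrt{2P_+(s(\tau))}}\,\dta \rt), \quad t \in [0, T^*).
\]

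If the exit is through the curve $\{\calL_{P_+} = 0,\ s > 0\}$, then $s(T^*) > 0$ and by continuity $s(\cdot)$ is bounded below by some $s_{\min} > 0$ on $[0,T^*]$, so the integrand is bounded and the exponential factor converges to a finite positive limit. Passing to the limit gives $\calL_{P_+}(w(T^*),s(T^*)) > 0$, contradicting $\calL_{P_+}(w(T^*),s(T^*)) = 0$. If instead the exit is through $\{s=0,\ w \ge 0\}$, then $s(T^*)=0$ with $s > 0$ on $[0,T^*)$, so the left-derivative $s'(T^*) = w(T^*) \le 0$; combined with the boundary condition $w(T^*)\ge 0$, this forces $w(T^*)=0$. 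I then aim to show $\calL_{P_+}(w(T^*),0) = w(T^*) > 0$, yielding the contradiction.

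The main obstacle is ruling out the possibility that the integral in the exponential drifts to $-\infty$ as $t \uparrow T^*$ in the second case, since the integrand has a singularity of order $1/\sqrt{s}$. This is where the local behavior of $P_+$ near $s = 0$ enters: from \eqref{aux_P}, $P_+'(0) = 1$, so $P_+(s) \sim s$ and $\sqrt{2P_+(s)} \sim \sqrt{2s}$ as $s \to 0^+$. Choose $\delta > 0$ small enough that $s(\tau) < 1/c_+$ on $(T^*-\delta, T^*)$; then on this subinterval the integrand $\frac{1-c_+s}{\sqrt{2P_+(s)}}$ is positive, so its contribution is non-negative. On $[0, T^*-\delta]$, $s$ is bounded below by a positive constant by continuity, so the integrand is bounded and that contribution is finite. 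Hence the full integral is bounded below, the exponential factor stays above a positive constant, and passing to the limit yields $\calL_{P_+}(w(T^*),0) = w(T^*) > 0$, closing the contradiction.
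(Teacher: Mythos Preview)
Your proof is correct and follows essentially the same approach as the paper, which defers the details to the proof of Proposition~\ref{rep_glo_2}: a contradiction argument decomposing $\partial\Sigma^\flat$ into the curved piece $\{\calL_{P_+}=0\}$ and the horizontal piece $\{s=0\}$, then using the strong comparison inequality \eqref{CP:sub:Gr} together with a lower bound on the exponential integral. Your handling of the origin (placing it in the $\{s=0,\,w\ge 0\}$ piece and combining $s'(T^*)\le 0$ with the comparison estimate) differs cosmetically from the paper's choice (placing $(0,0)$ in the $\{\calL_{P_+}=0,\,s\ge 0\}$ piece), but the underlying observation---that the integrand $(1-c_+s)/\sqrt{2P_+(s)}$ is non-negative near $s=0$, so the exponential factor stays bounded below---is identical.
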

\begin{proof}
We prove that for any $(w_0,s_0)\in\Sigma^\flat$, the solution to \eqref{rep_sys} satisfies $(w(t),s(t))\in\Sigma^\flat$ for all $t\ge 0$. Suppose  not, i.e., there exists $(w_0,s_0)\in\Sigma^\flat$ such that 
\[
T_*:=\inf\{t>0 : (w(t),s(t)) \notin  \Sigma^\flat\} < +\infty.
\]
In particular, we have $(w(T_*),s(T_*))\in \pa\Sigma^\flat$. We decompose the boundary of $\Sigma^\flat$ into two subsets:
\[
\pa \Sigma^\flat = \pa \Sigma^\flat_1 \cup \pa \Sigma^\flat_2:=\lt\{w+\sqrt{2P_+(s)}=0,\ s \ge 0 \rt\} \cup \lt\{w>0,\ s=0\rt\}.
\] 
We shall demonstrate that $T_*<+\infty$ leads to a contradiction. Indeed, the nature of proof is the same for $0<\nu<2\sqrt{c_+}$ case, which is slightly more complicated. Hence, we defer the details of the proof to the next proposition.
\end{proof}

\begin{proposition}\label{rep_glo_2}
Let $0<\nu<2\sqrt{c_{+}}$. Then, the following set
\[
\Sigma^\flat=\lt\{(w,s) : -\sqrt{2P_+(s)}<w<\sqrt{2N_-(s)} \rt\}
\]
is an invariant set for the system \eqref{rep_sys} provided that \eqref{ext_dam_11-22} holds.
\end{proposition}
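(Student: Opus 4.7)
The plan is to argue by contradiction, along the lines hinted in the proof of Proposition \ref{rep_glo_1}. Fix $(w_0, s_0) \in \Sigma^\flat$, equivalently $\calL_{P_+}(w_0, s_0) > 0$ and $\calL_{N_-}(w_0, s_0) < 0$, and set
\[
T_* := \inf\lt\{t > 0 : (w(t), s(t)) \notin \Sigma^\flat\rt\}.
\]
Assuming $T_* < \infty$, we would have $(w(T_*), s(T_*)) \in \partial \Sigma^\flat$, and I would aim to exclude each of the three pieces in the boundary decomposition \eqref{bdy:sub}. To even access this decomposition one needs the closing condition \eqref{ext_dam_11-22}: it is equivalent to $s_{**} \leq 0$ (applied with $(c_1, c_2) = (c_+, c_-)$ and $s_* = s_+$), which guarantees that $\sqrt{2N_-(s)}$ is well defined on the whole of $[0, s_+]$, and in particular ensures that the enclosing curve $\{\calL_{P_+} = 0\} \cup \{\calL_{N_-} = 0\}$ indeed closes up at the $w$-axis rather than dropping below it.

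Next I would confine $s(\cdot)$: since $\Sigma^\flat \subseteq \{0 \leq s \leq s_+\}$ (the right tip $s = s_+$ being the common zero of $\sqrt{2P_+}$ and $\sqrt{2N_-}$), continuity gives $s(\tau) \in [0, s_+] \subseteq \textnormal{Dom}(P_+) \cap \textnormal{Dom}(N_-)$ for every $\tau \in [0, T_*]$. This verifies the domain hypothesis needed to invoke the strong comparison principle \eqref{CP:sub}, which then yields $\calL_{P_+}(w(T_*), s(T_*)) > 0$ and $\calL_{N_-}(w(T_*), s(T_*)) < 0$. These two strict inequalities immediately exclude $(w(T_*), s(T_*))$ from the curved pieces $\partial \Sigma^\flat_1 = \{\calL_{P_+} = 0\}$ and $\partial \Sigma^\flat_2 = \{\calL_{N_-} = 0, s > 0\}$. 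The remaining horizontal piece $\partial \Sigma^\flat_3 = \{0 < w \leq \sqrt{2N_-(0)}, s = 0\}$ I would exclude by direct inspection of the ODE: since $s(\tau) > 0$ on $[0, T_*)$ and $s(T_*) = 0$, the one-sided derivative satisfies $s'(T_*^-) \leq 0$, but $s'(T_*) = w(T_*) > 0$ by continuity and membership in $\partial \Sigma^\flat_3$, a contradiction. Thus $T_* = \infty$ and $\Sigma^\flat$ is invariant.

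The genuine obstacle is promoting the Gr\"onwall bound \eqref{CP:sub:Gr} to the strict inequalities \eqref{CP:sub}, and this is precisely where the closing condition earns its keep. The exponential factors in \eqref{CP:sub:Gr} involve integrands $\frac{1 - c_+ s(\tau)}{\sqrt{2P_+(s(\tau))}}$ and $\frac{c_- s(\tau) - 1}{\sqrt{2N_-(s(\tau))}}$ that become singular as $s(\tau)$ approaches the endpoints of $\textnormal{Dom}(P_+)$ or $\textnormal{Dom}(N_-)$; without $s_{**} \leq 0$ the $N_-$-branch would fail to bound $\Sigma^\flat$ from above over $[0, s_+]$, and the resonance-type numerical example \eqref{test_sys} shows that small oscillations in $c$ can then force $s$ to zero in finite time. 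Under \eqref{ext_dam_11-22}, on the other hand, the zeros of $P_+$ and $N_-$ at the ends of their domains are simple (of square-root type, as visible directly from \eqref{aux_P}--\eqref{aux_N}), so $1/\sqrt{2P_+}$ and $1/\sqrt{2N_-}$ are locally integrable in $s$; together with the a priori bound $|w(\tau)| \leq \sqrt{2N_-(0)}$ inside $\Sigma^\flat$, a change-of-variables argument (in the spirit of the proof of Lemma \ref{lem:time}) transfers this $s$-integrability to $\tau$-integrability, keeps the exponential factor bounded below by a positive constant, and delivers the strict inequalities needed to close the contradiction.
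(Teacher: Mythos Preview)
Your overall scaffold --- contradiction via $T_*<\infty$, the boundary decomposition \eqref{bdy:sub} unlocked by the closing condition, and the one-sided derivative trick on $\partial\Sigma^\flat_3$ --- is exactly the paper's route. The gap is in how you dispatch the curved pieces $\partial\Sigma^\flat_1$ and $\partial\Sigma^\flat_2$. You invoke \eqref{CP:sub} as a black box, then in your last paragraph try to justify it by bounding the full exponent $\int_0^{T_*}\frac{1-c_+s(\tau)}{\sqrt{2P_+(s(\tau))}}\,d\tau$ from below via $s$-integrability of $1/\sqrt{2P_+}$ and the change of variables $ds=w\,d\tau$, using $|w|\le\sqrt{2N_-(0)}$. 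But an \emph{upper} bound on $|w|=|ds/d\tau|$ goes the wrong way: it gives a \emph{lower} bound on $|d\tau|$ relative to $|ds|$, so the $\tau$-integral can be far more negative than the (finite) $s$-integral. Concretely, near the right tip $s=s_*$ both $\sqrt{2P_+(s)}$ and $\sqrt{2N_-(s)}$ vanish like $\sqrt{s_*-s}$, forcing $|w|=O(\sqrt{s_*-s})$ while the integrand is $O\!\big(-1/\sqrt{s_*-s}\big)$; your argument gives no control on how long the trajectory spends there, so the exponential factor in \eqref{CP:sub:Gr} is not bounded away from zero on $[0,T_*]$.

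The paper avoids this entirely by working \emph{locally}: it applies \eqref{CP:sub:Gr} only on $[T_1-\epsilon,T_1]$, where $(w(T_1-\epsilon),s(T_1-\epsilon))\in\Sigma^\flat$ still gives $\calL_{P_+}>0$, so one only needs $\int_{T_1-\epsilon}^{T_1}\frac{1-c_+s(\tau)}{\sqrt{2P_+(s(\tau))}}\,d\tau\ge -C_\epsilon$. This follows from a dichotomy on the fixed value $s(T_1)$: if $s(T_1)<1/c_+$ the integrand is non-negative near $T_1$; if $s(T_1)\ge 1/c_+$ then, because $\partial\Sigma^\flat_1$ is defined with $s<s_*$ strictly, $P_+(s(\tau))$ is bounded away from zero for $\tau$ near $T_1$ and the integrand is bounded. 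The corner $(0,s_*)$ is placed in $\partial\Sigma^\flat_2$, where the analogous dichotomy uses that $s_*>1/c_-$ (a consequence of \eqref{ext_dam_11-22}) to make the $N_-$-integrand non-negative there. This local sign/boundedness split is the mechanism you are missing.
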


\begin{proof}
We prove that for any $(w_0,s_0)\in\Sigma^\flat$, the solution to \eqref{rep_sys} satisfies $(w(t),s(t))\in\Sigma^\flat$ for all $t\ge 0$. Suppose, on the contrary, that there exists $(w_0,s_0)\in\Sigma^\flat$ such that 
\[
T_1:=\inf\{t>0 : (w(t),s(t)) \in \pa \Sigma^\flat\} < +\infty.
\]
We decompose the boundary of $\Sigma^\flat$ into three subsets in the sense of \eqref{bdy:sub} that
\[
\begin{split}
 &\pa \Sigma^\flat_1 \cup \pa\Sigma^\flat_2 \cup \pa\Sigma^\flat_3 \\
&\quad :=\lt\{\calL_{P_+}(w,s)=0, 0\le s < s_*\rt\} \cup \lt\{\calL_{N_-}(w,s)=0, 0 < s \le s_*\rt\} \cup \lt\{0 < w \le \sqrt{2N_-(0)}, s=0\rt\}.
\end{split}
\] 
We now consider the following sub-three cases.

\vspace{.2cm}
$\bullet$ {$(w(T_1),s(T_1))\in \pa \Sigma^\flat_1$.} We claim that there exists a small $\epsilon>0$ and $C_{\epsilon}>0$ such that
\bq\label{sub_cri:int}
\int_{T_1-\epsilon}^{T_1}\frac{1-c_{+}s(\tau)}{\sqrt{2P_+(s(\tau))}}\,\dta \ge -C_{\epsilon}.
\eq
If $s(T_1)<\frac{1}{c_{+}}$, then we can take small $\epsilon>0$ such that the integrand becomes non-negative for all $\tau \in \lt[T_1-\epsilon,T_1\rt]$. On the other hand, if $s(T_1)\ge \frac{1}{c_{+}}$, then we have $s(T_1) < s_*$ by the assumption. Since $\sqrt{2P_+(s_*)}=0$, we can take small $\epsilon>0$ such that $P_+(\tau)$ is away from zero for all $\tau \in \lt[T_1-\epsilon,T_1\rt]$. Hence, this supplies the estimate \eqref{sub_cri:int}. By combining this lower bound of integral  \eqref{sub_cri:int} with the inequality \eqref{CP:sub:Gr}, we find that
\[
w(T_1) + \sqrt{2P_+(s(T_1))} \ge \lt( w(T_1-\epsilon) + \sqrt{2P_+(s(T_1-\epsilon))}\rt)\exp(-C_{\epsilon})>0,
\]
leading to a contradiction.

\vspace{.2cm}

$\bullet$ {$(w(T_1),s(T_1))\in \pa \Sigma^\flat_2$.} Similarly, we can prove that there exist a small $\epsilon>0$ and $C_{\epsilon}>0$ such that
\[
\int_{T_1-\epsilon}^{T_1}\frac{c_{-}s(\tau)-1}{\sqrt{2N_-(s(\tau))}}\,\dta > -C_{\epsilon}.
\]
We find by means of the previous case with \eqref{CP:sub:Gr} that
\[
w(T_1) + \sqrt{2N_-(s(T_1))} \le \lt(w(T_1-\epsilon)+\sqrt{2N_-(s(T_1-\epsilon))}\rt)\exp(-C_{\epsilon})<0,
\]
which is a contradiction

\vspace{.2cm}

$\bullet$ {$(w(T_1),s(T_1))\in \pa \Sigma^\flat_3$.} Noting that $s(T_1)=0$ and $w(T_1)>0$ conveys us to the following
\[
0<w(T_1)=s'(T_1) = \displaystyle\lim_{t\to T_1-}\frac{s(T_1)-s(t)}{T_1-t}\le\limsup_{t\to T_1-}\frac{-s(t)}{T_1-t}\le0,
\]
that we arrive at the contradiction. This completes the proof.
\end{proof}

\begin{proof}[Proof of Theorem \ref{thm:rep:sub}] In view of the conclusion  that $\Sigma^{\flat}$ is \emph{invariant} for \eqref{rep_sys} from Proposition \ref{rep_glo_1} and \ref{rep_glo_2}, for any regular initial data $(\rho_0,u_0)$ satisfying
\[
\lt(\frac{u_0'(x)}{\rho_0(x)}, \frac{1}{\rho_0(x)}\rt) \in \Sigma^\flat, \quad \forall x\in\R,
\]
we find that the classical solution $(\rho,u)$ is indeed global by invoking \emph{a priori} estimate \eqref{est_X}. Hence, we arrive at the conclusion of the theorem.
\end{proof}

As a direct consequence of this result, we present the proof of Theorem \ref{thm:ion:glo}. 
\begin{proof}[Proof of Theorem \ref{thm:ion:glo}]
We begin by discussing the smallness assumption on $H_0$. In \cite{BCKpre},  the lower and upper bounds on the electric potential for a classical solution to the system \eqref{main_sys_ion}--\eqref{bdy} are investigated as follows:
\[
0 <  c_{-} \leq e^{\phi(t,x) }\leq c_{+}, \ \ \  c_{-} := \exp\lt(V^{-1}_-(H_0)\rt)\quad \mbox{and} \quad  c_{+} := \exp\lt(V^{-1}_+(H_0)\rt),
\]
where $V^{-1}_\pm$ are inverses of $V_\pm$ which is defined by
\begin{equation}\label{fctV}
V(z) := \left\{ \begin{array}{ll}
 \displaystyle V_+(z):= \int_0^z \sqrt{2U(r)}\,\textnormal{d}r & \textrm{for $z\geq0$},\\[3mm]
 \displaystyle V_-(z):= \int_z^0 \sqrt{2U(r)}\,\textnormal{d}r & \textrm{for $z\leq0$}.
  \end{array} \right.
\end{equation}
Note that $V_{\pm}(z)$ have the inverse functions and $V \in \calC^2(\R)$, see \cite[Section 2.1]{BCKpre}. 

Indeed, careful estimates via Taylor's expansion suggest that
\[
V_\pm(z) = \frac{1}{2}z^2 + \frac{1}{9}z^3 + \calO(z^4),
\]
so that
\[
V_\pm^{-1}(x) =  \pm \sqrt{2x} + {\mathcal O}(x), \qquad 0 < x\ll1.
\]
Hence, for small enough $H_0\ll1$, we have
\[
\frac{c_{+}}{c_{-}}= 1+ 2\sqrt{2H_0} + {\mathcal O}(H_0).
\]

We are now ready to investigate what secures the closing condition to hold; we go through as:
\[
\begin{split}
s_{**}\le 0 &\Longleftrightarrow \lt( (1+e^{\gamma_+})\frac{c_{-}}{c_+}-1\rt)e^{\gamma_-} \ge 1 \\
&\Longleftarrow 1 + e^{\gamma_+} \ge \frac{2c_+}{c_-} \\
&\Longleftarrow \gamma_+ \ge 2\lt(\frac{c_+}{c_-}-1\rt) = 4\sqrt{2H_0} + \calO(H_0) \\
&\Longleftarrow \nu\pi \ge \sqrt{4c_{+}-\nu^2}\lt(4\sqrt{2H_0} + \calO(H_0)\rt).
\end{split}
\]
Since $\sqrt{4c_{+}-\nu^2} = \calO(1)$ for small $\nu>0$, we find that \eqref{ci:sub} guarantees the closing condition as desired.
\end{proof}

%
%
%
%
%
%

\appendix

\section{A priori estimates in the proof of Theorem \ref{thm_apriori}} \label{app:lwp}
In this appendix, we provide the details of the a priori estimate \eqref{est:lwp}.

\medskip

$\bullet$ Estimates of $\|\rho -c\|_{H^s \cap \dot H^{-1}}$:  We find by means of the continuity equation \eqref{main_sys}${}_1$ and the dual formulation of $\dot{H}^{-1}$ that
\begin{align}\label{LWP1}
\frac{d}{dt}\|\rho - c\|_{\dot H^{-1}} &\leq \ \| \rho u\|_{L^2} + \|\pa_t c\|_{\dot H^{-1}}  \leq \|\rho\|_{L^\infty} \|u\|_{L^2} +\|\pa_t c\|_{\dot H^{-1}}.
\end{align}
For the higher-order estimates of $\rho-c$, we see that the continuity equation can be read as
\[
\pa_t (\rho - c) + \pa_x ((\rho - c)u) = - \pa_t c - \pa_x (cu).
\]
Differentiating it by $\pa_x^l$ for each $l=0,1,\dots, s$, we test $\pa_x^l(\rho-c)$ to find that
\begin{align*}
\frac12\frac{d}{dt}\intr |\pa_x^l (\rho - c)|^2\,\dx =\,& -\intr \pa_x^l (\rho - c) u \pa_x^{l+1} (\rho - c)\,\dx -\intr \pa_x^l (\rho - c) [\pa_x^{l+1}, u](\rho - c)\,\dx\cr
&- \intr \pa_x^l (\rho - c) \pa_x^l \pa_t c\,\dx - \intr \pa_x^l (\rho - c)   \pa_x^{l+1}(cu)\,\dx \cr
=:\,& I + II + III +  IV ,
\end{align*}
where $[\cdot, \cdot]$ stands for the commutator operator, i.e. $[A, B] = AB - BA$. Then, standard commutator estimates and Sobolev inequalities yield
\begin{align*}
I &\leq \|\pa_x u\|_{L^\infty}\| \pa_x^l (\rho - c)\|_{L^2}^2,\cr
II &\leq \| \pa_x^l (\rho - c)\|_{L^2} \|[\pa_x^{l+1}, u](\rho - c)\|_{L^2} \cr
&\leq  C\| \pa_x^l (\rho - c)\|_{L^2}\lt(\|\pa_x u\|_{L^\infty}\| \pa_x^l (\rho - c)\|_{L^2} + \|\pa_x^{l+1}u\|_{L^2}\|\rho - c\|_{L^\infty} \rt)\cr
III &\leq \|\pa_x^l \pa_t c\|_{L^2}\| \pa_x^l (\rho - c)\|_{L^2}, \quad \mbox{and}\cr
IV &\leq \| \pa_x^l (\rho - c)\|_{L^2} \|\pa_x^{l+1} (cu)\|_{L^2} \cr
&\leq C \| \pa_x^l (\rho - c)\|_{L^2}\lt( \|\pa_x^{l+1} c\|_{L^2}\|u\|_{L^\infty} + \|\pa_x^{l+1} u\|_{L^2}\|c\|_{L^\infty} \rt) \cr
&\leq C \| \pa_x^l (\rho - c)\|_{L^2}\lt( \|\pa_x^{l+1} c\|_{L^2}\|u\|_{H^1} + \|\pa_x^{l+1} u\|_{L^2}\|c\|_{L^\infty} \rt).
\end{align*}
Thus, we have
\begin{align}\label{LWP2}
\frac{d}{dt}\|\pa_x^l(\rho - c)\|_{L^2} \leq  &C\lt( \|\rho-c\|_{L^{\infty}}+\|c\|_{L^{\infty}} + \|\pa_x u\|_{L^\infty} \rt) \lt(\|\pa_x^l(\rho - c)\|_{L^2} + \|\pa_x^{l+1} u\|_{L^2} \rt) \cr
+ &C\|\pa_x^{l+1} c\|_{L^2}\|u\|_{H^1} +  \|\pa_x^l \pa_t c\|_{L^2}.
\end{align}

$\bullet$ Estimates of $\|u\|_{H^{s+1}}$:
We first estimate
\bq\label{LWP3}
\frac12\frac{d}{dt}\intr u^2\,\dx \leq \|\pa_x u\|_{L^\infty}\|u\|_{L^2}^2 + \|\pa_x \phi\|_{L^2}\|u\|_{L^2} = \|u\|_{L^2}\lt(\|\pa_x u\|_{L^\infty}\|u\|_{L^2} + \|\rho - c\|_{\dot{H}^{-1}}\rt).
\eq
For the higher-order estimates of $u$, where $l=0,1,\dots,s$, we deduce
\begin{align}\label{LWP4}
\frac12\frac{d}{dt}\intr |\pa_x^{l+1} u|^2\,\dx &\leq -\intr (\pa_x^{l+1} u) (\pa_x^{l+2} u) u\,\dx - \intr \pa_x^{l+1} u [\pa_x^{l+1}, u] \pa_x u\,\dx + k \intr \pa_x^{l+1} u \pa_x^l (\rho - c)\,\dx\cr
&\leq C\|\pa_x u\|_{L^\infty}\|\pa_x^{l+1} u\|_{L^2}^2 + \|\pa_x^{l+1} u\|_{L^2} \|\pa_x^l (\rho - c)\|_{L^2}.
\end{align}
Combining \eqref{LWP1}, \eqref{LWP2}, \eqref{LWP3}, and \eqref{LWP4}, we conclude the estimate \eqref{est:lwp}.



\begin{thebibliography}{10}
\bibitem{BCKpre} J. Bae, J. Choi, and B. Kwon, Formation of singularities in plasma ion dynamics, preprint, arXiv:2012.09657.

\bibitem{BK19} J. Bae and B. Kwon, Small amplitude limit of solitary waves for the Euler--Poisson system, J. Differential Equations, 266, (2019), 3450--3478.

\bibitem{BK22} J. Bae and B. Kwon, Linear stability of solitary waves for the isothermal Euler--Poisson system, Arch. Ration. Mech. Anal., 243, (2022), 257--327. 

\bibitem{BL20} M. Bhatnagar and H. Liu, Critical thresholds in one-dimensional damped Euler--Poisson systems, Math. Models Methods Appl. Sci., 30, (2020), 891--916.

\bibitem{BL20_2} M. Bhatnagar and H. Liu. Critical thresholds in 1D pressureless Euler--Poisson systems with variable background, Physica D, 414, (2020) 132728.

\bibitem{BLpre} M. Bhatnagar and H. Liu, A complete characterization of sharp thresholds to spherically symmetric multidimensional pressureless Euler--Poisson systems, preprint, arXiv:2302.04428. 

\bibitem{BLTpre} M. Bhatnagar, H. Liu, and C. Tan, Critical thresholds in the Euler--Poisson--alignment system, J. Differential Equations, 375, (2023), 82--119.


\bibitem{BW98} M. P. Brenner and T. P. Witelski, On spherically symmetric gravitational collapse, J. Stat. Phys., 93, (1998), 863--899.

\bibitem{CCZ16} J. A. Carrillo, Y.-P. Choi, and E. Zatorska, On the pressureless damped Euler--Poisson equations with quadratic confinement: critical thresholds and large-time behavior, Math. Models Methods Appl. Sci., 26, (2016), 2311--2340.

\bibitem{CCTT16} J. A. Carrillo, Y.-P. Choi, E. Tadmor, and C. Tan, Critical thresholds in 1D Euler equations with nonlocal forces, Math. Models Methods Appl. Sci., 26, (2016), 185--206.

\bibitem{C84}  F. Chen, Introduction to Plasma Physics and Controlled Fusion (Plenum Press, New York, 1984), Vol. 1.

\bibitem{DLYY02} Y. Deng, T.-P. Liu, T. Yang, and Z. Yao, Solutions of Euler--Poisson equations for gaseous stars, Arch. Ration. Mech. Anal., 164, (2002), 261--285.

\bibitem{ELT01} S. Engelberg, H. Liu and E. Tadmor, Critical threshold in Euler--Poisson equations, Indiana Univ. Math. J., 50, (2001), 109--157.

\bibitem{GHZ17} Y. Guo, L. Han, and J. Zhang, Absence of shocks for one dimensional Euler--Poisson system, Arch. Rational Mech. Anal., 223, (2017), 1057--1121.

\bibitem{HJL81} D. Holm, S. F. Johnson, and K. E. Lonngren, Expansion of a cold ion cloud, Appl. Phys. Lett., 38, (1981), 519--521.

\bibitem{LLS13} D. Lannes, F. Linares, and J. C. Saut, The Cauchy problem for the Euler--Poisson system and derivation of the Zakharov--Kuznetsov equation, in: M. Cicognani, F. Colombini, D. Del Santo (Eds.), Studies in Phase Space Analysis with Applications to PDEs, in: Progr. Nonlinear Differential Equations Appl., vol. 84, Birkh\"auser, pp. 183--215, (2013).

\bibitem{LZ02} F. Lin and P.  Zhang, On the hydrodynamic limit of Ginzburg--Landau wave vortices. Comm. Pure and Applied Math.,  55(7) (2002), 831--856.
 
\bibitem{LP19} C. Liu and Y Peng, Global convergence of the Euler--Poisson system for ion dynamics, Math. Mech. Appl. Sci., 42, (2019), 1236--1248.

\bibitem{L06} H. Liu, Wave breaking in a class of nonlocal dispersive wave equations, J. Nonlinear Math. Phys., 13, (2006), 441--466.

\bibitem{LT02} H. Liu and E. Tadmor, Spectral dynamics of velocity gradient field in restricted flows, Comm. Math. Phys., 228, (2002), 435--466.

\bibitem{LT03} H. Liu and E. Tadmor, Critical thresholds in 2-D restricted Euler-Poisson equations, SIAM J. Appl. Math., 63, (2003), 1889--1910.

\bibitem{M03} A. Majda, Introduction to PDEs and Waves for the Atmosphere and Ocean, Courant Lect. Notes Math. 9, Courant Institute of Mathematical, Sciences, New York, 2003.

\bibitem{MRS90} P. A. Markowich, C. Ringhofer, and C. Schmeiser, Semiconductor Equations, Springer-Verlag, Berlin, Heidelberg, New York, 1990.

\bibitem{TT14} E. Tadmor and C. Tan, Critical thresholds in flocking hydrodynamics with nonlocal alignment, Proc. Royal Soc. A, 372, (2014), 20130401.

\bibitem{TW08} E. Tadmor and D. Wei, On the global regularity of sub-critical Euler--Poisson equations with pressure, J. Eur. Math. Soc., 10, (2008), 757--769.

\bibitem{T21} C. Tan. Eulerian dynamics in multi-dimensions with radial symmetry, SIAM J. Math. Anal., 53, (2021), 3040--3071.

\bibitem{WTB12} D. Wei, E. Tadmor, H. Bae. Critical Thresholds in multi-dimensional Euler--Poisson equations with radial symmetry, Commun. Math. Sci., 10, (2012), 75--86.
\end{thebibliography}
\end {document}